\documentclass[12pt]{amsart}

\usepackage[hmargin=0.8in,height=8.6in]{geometry}
\usepackage{amssymb,amsthm, times}
\usepackage{delarray,verbatim}
\usepackage{graphicx}

\linespread{1.20}

\usepackage{ifpdf}
\usepackage{color}
\definecolor{webgreen}{rgb}{0,.5,0}
\definecolor{webbrown}{rgb}{.8,0,0}
\definecolor{emphcolor}{rgb}{0.95,0.95,0.95}

\usepackage{hyperref}
\hypersetup{%
          colorlinks=true,
          linkcolor=webbrown,
          filecolor=webbrown,
          citecolor=webgreen,
          breaklinks=true}
\ifpdf \hypersetup{pdftex,
            pdfstartview=FitH, 
            bookmarksopen=true,
            bookmarksnumbered=true
} \else \hypersetup{dvips} \fi

\linespread{1.2}

\newcommand {\e}{\mathbb{E}}

\numberwithin{equation}{section}

\newtheorem{theorem}{Theorem}[section]

\newtheorem{remark}{Remark}[section]
\newtheorem{lemma}{Lemma}[section]

\numberwithin{remark}{section} \numberwithin{proposition}{section}
\numberwithin{corollary}{section}
\newcommand {\R}{\mathbb{R}}

\newcommand {\p}{\mathbb{P}}

\newcommand{\diff}{{\rm d}}

\title{Optimal double stopping of a Brownian bridge}
\thanks{This version: \today.}

\author[E. J. Baurdoux]{Erik J. Baurdoux}
\address[E. J. Baurdoux]{ Department of Statistics, London School of Economics, Houghton Street, London, WC2A 2AE, UK. }
\email{e.j.baurdoux@lse.ac.uk}
\author[N. Chen]{Nan Chen}
\address[N. Chen]{Department of Systems Engineering and Engineering Management, the Chinese University of Hong Kong, Shatin, N.T., Hong Kong}
\email{nchen@se.cuhk.edu.hk}
\author[B. A. Surya]{\,\,Budhi A. Surya}

\address[B. A. Surya]{School of Business and Management, Bandung Institute of Technology, 10 Ganesha Street, 40132 Bandung, West Java, Indonesia.}
\email{budhi.surya@sbm-itb.ac.id}

\author[K. Yamazaki]{Kazutoshi Yamazaki}
\address[K. Yamazaki]{Department of Mathematics,
Faculty of Engineering Science, Kansai University, 3-3-35 Yamate-cho, Suita-shi, Osaka 564-8680, Japan. }
\email{kyamazak@kansai-u.ac.jp}

\date{}
\begin{document}

\maketitle \noindent

\begin{abstract}We study optimal double stopping problems driven by a Brownian bridge.   The objective is to maximize the expected spread between the payoffs achieved at the two stopping times.  We study several cases where the solutions can be solved explicitly by strategies of threshold type.
\\
\noindent \small{\textbf{Key words:} Brownian bridge; optimal double stopping, buying-selling strategies \\
\noindent \textbf{Mathematics Subject Classification (2010):}  60G40, 60H30}\\
\end{abstract}

\section{Introduction}\label{sec:problem}

In this paper, we study several optimal double stopping problems for a Brownian bridge.  Given a Brownian bridge $\{X_s\}_{t\leq s\leq 1}$ starting from $x$ at time $0 \leq t < 1$ and ending at $0$ at time $1$, or equivalently a Brownian motion conditioned to be at $0$ at time $1$, our objective is to choose a pair of stopping times, $t \leq \tau_1 \leq \tau_2 <1$ such that the expected spread between the payoffs $f(X_{\tau_2})$ and $f(X_{\tau_1})$ is maximized for a given functional $f$.

The optimal double stopping problem has received much attention recently in the field of finance.  In particular, this is used to derive a ``buy low and sell high" strategy so as to maximize the expected spread between the two payoffs.  The strategy called \emph{mean-reversion} typically uses the ``mean" computed from the historical data as a benchmark;  an asset is bought if the price is lower  and is sold when it is higher. Closely related is the trading strategy called \emph{pairs trading}. Two assets of similar characteristics (e.g., in the same industry category) are considered.  By longing one and shorting the other, one can construct a mean-reverting portfolio. An implementation of a pairs trading reduces to solving a single or double stopping problem where one wants to decide the time of (entry and) liquidation of the position so as to maximize the spread.
We refer the reader to, e.g., \cite{ekstrom2011optimal}, \cite{Leung2013} and \cite{Song2009} among others.

There are several motivations to consider a Brownian bridge as an underlying process.  We list here three examples where an asset process is expected to converge to a given value at a given time, and hence a Brownian bridge is suitable in modeling.

The first example, known as the \emph{stock pinning}, is a phenomenon where  a stock price tends to  end up in the vicinity of the strike of its option near its expiry.  This is observed typically for heavily traded assets; within minutes before the expiration, the stock price experiences a strong mean-reversion to the strike.   We refer the reader to  \cite{Avellaneda} and \cite{Ekstrom} and references therein,  for the discussion on the mechanism of the stock pinning.

The second example is a sudden mispricing of assets due to the market's overreaction to news and rumors, which is followed by a rapid recovery to the original value. In the well-known 2010 \emph{Flash Crash},  the Dow Jones Industry Average fell about 9 percent and then recovered within minutes; see, e.g., Chapter III of \cite{Lin}.  While its cause is still in dispute, it is believed to have been first triggered by some newly disclosed information on debt crisis in Greece, followed by a chain reaction of large execution of sales by the automated algorithmic/high frequency trading. While the price may not recover completely to the original price, the difference is  small in comparison to the magnitude of the large fall caused by these events.

%

The third example comes from the dynamic prices of goods in the existence of seasonality and/or fixed sales deadlines.   Important examples include low cost carriers (LCC's)/high speed rails, hotel rooms and theater tickets, where these goods become worthless after given deadlines.  In the field of revenue/yield management, the price of such good is chosen dynamically (and stochastically) over time so as to maximize the expected total yield; the problem reduces to striking the balance between maximizing the price per unit and minimizing the remaining stocks at the deadline. In typical models, the dynamic programming principle applies and the optimal price becomes a function of the remaining number of stocks and the remaining time until the deadline; see, among others, the seminal paper by Gallego and van Ryzin \cite{Gallego}.  According to these models, the price converges to a given value on condition that the remaining inventory vanishes by the deadline; this is aimed by the manager and is indeed more than likely achieved when the demand is high (e.g., holiday seasons).
%
%

The optimal double stopping problem for a Brownian bridge considered in this paper is applicable in situations where one wants to buy and sell an asset to maximize the spread until it converges to the fixed value as in these examples.

There are papers on the single optimal stopping problem for a Brownian bridge.  In particular, Shepp \cite{Shepp} solves the problem of maximizing the first moment of the stopped Brownian bridge (under the assumption that it starts at zero) by rewriting the problem in terms of a time-changed Brownian motion.  Ekstr\"{o}m and Wanntorp \cite{Ekstrom} solve for several payoff functionals with arbitrary starting values.  Our findings heavily rely on the latter; we shall start with the results in \cite{Ekstrom} and extend to the optimal double stopping problem.  Regarding the discrete-time analog (the urn problem), we refer the reader to \cite{Mazalov_Tamaki}  and \cite{Tamaki} for single optimal stopping problems.  For optimal double stopping problems, Ivashko \cite{Ivashko} considers the problem of maximizing the spread of the first moment; Sofrenov et al.\
\cite{Sofronov} consider a different but related buying-selling problem under independent observations.

\subsection{Problems}
Fix $0 \leq t <1$ and consider a Brownian bridge $\{X_s\}_{t\leq s\leq 1}$ satisfying
\begin{align}
\mathrm{d}X_s=-\frac{X_s}{1-s}\mathrm{d}s+\mathrm{d}W_s,\quad t\leq s < 1, \label{BB_SDE}
\end{align}
with $X_t=x \in \R$ and where $\{ W_s\}_{t \leq s \leq 1}$ denotes a standard Brownian motion.   We let $\p_{t,x}$ and $\e_{t,x}$ be the conditional probability and expectation under which $X_t = x$ for any $0 \leq t < 1$ and $x \in \R$.

We consider three problems of maximizing the expected spread given as follows:
\begin{description}
\item[Problem 1]  $\e_{t,x} \left[X_{\tau_2} -  X_{\tau_1} \right]$,
\item[Problem 2]  $\e_{t,x}[(X_{\tau_2}^{2n+1} -  X_{\tau_1}^{2n+1}) 1_{\{X_{\tau_1} \leq 0\}} + (X_{\tau_1}^{2n+1} -  X_{\tau_2}^{2n+1}) 1_{\{X_{\tau_1} > 0\}}]$, for a given integer $n \geq 0$,
\item[Problem 3] $\e_{t,x} \left[|X_{\tau_2}|^{q} -  |X_{\tau_1}|^{q} \right]$,
for a given $q > 0$.
\end{description}
The supremum is taken over all pairs of stopping times $t \leq \tau_1 \leq \tau_2 < 1$ a.s.\ with respect to the filtration generated by $X$.

Problem 1 corresponds to the case where short-selling is not permitted, and an asset must be bought prior to being sold. Problem 2 is the case where it is allowed; if the price at the first exercise time is negative (resp.\ positive), the asset is bought (resp.\ sold) and then it is sold (resp.\ bought) at the second exercise time. Problem 3 models the case when the payoff function is v-shaped with respect to the underlying; this is motivated by investing strategies such as a straddle.

For each problem, we shall show that the optimal stopping times are first hitting times of the time-changed process $\{X_s/\sqrt{1-s}\}_{t \leq s \leq 1}$. 

To the best of our knowledge, this is the first result on the finite-time horizon optimal double stopping problem where the solution is nontrivial and explicit. It is remarked that a finite-time horizon optimal stopping in general lacks an explicit solution even for a single stopping case.
For other processes, we expect that the solutions  are either trivial (e.g.\ buying immediately and selling at the maturity) or do not admit analytical solutions.  It is also noted that thanks to the a.s.\ fixed end point of a Brownian bridge, the two stoppings are always exercised; for other processes, one needs to take care of a scenario where the first and/or second stoppings never occur during the time horizon.

\subsection{Outlines}

The rest of the paper is organized as follows. Section \ref{section_preliminaries} reviews the single optimal stopping problem of a Brownian bridge as obtained in \cite{Ekstrom} with some complements that will be needed for our analysis in later sections.  Sections  \ref{section_problem1},  \ref{section_problem2} and  \ref{section_problem3} solve Problems 1, 2 and 3, respectively.   Some proofs are deferred to Appendix \ref{appendix_proof}.

\section{Preliminaries} \label{section_preliminaries}
In this section, we review the results of Ekstr\"{o}m and Wanntorp \cite{Ekstrom} for the optimal single stopping problem of a Brownian bridge.  As there are a few details omitted in \cite{Ekstrom} but will be important in our analysis, we complement these results here.  Throughout, let us define, for all $q > 0$,\footnote{It is remarked that (3.5) of \cite{Ekstrom} contains a typo in their definitions of $F_{q}$ and $G_{q}$. We suggest the reader to refer to  Section 4 of Ekstr\"{o}m et al.  \cite{ekstrom2011optimal} for a correct version.}
\begin{align}
F_{q}(y) := \int_0^\infty u^{q-1} e^{yu -  {u^2} /2} \diff u \quad \textrm{and} \quad G_{q}(y) := F_{q}(-y), \quad y \in \R. \label{def_F_G}
\end{align}
These functions can be written in terms of the confluent hypergeometric/parabolic cylinder functions; see, e.g., \cite{Abramowitz}.
Consider the partial differential equation (PDE), $\mathcal{L} \xi(t,x) = 0$, for $\xi \in C^1 \times C^2$ on some open set $E$, with the infinitesimal generator $\mathcal{L}$ for a Brownian bridge \eqref{BB_SDE},
\begin{align}
\mathcal{L} \xi(t,x) := \frac \partial {\partial t}\xi -  \frac x {1-t} \frac \partial {\partial x} \xi + \frac 1 2 \frac {\partial^2} {\partial x^2} \xi, \quad (t,x) \in E. \label{pde}
\end{align}
This can be simplified by setting $\xi(t,x) = (1-t)^{q/2} \zeta(x/\sqrt{1-t})$ to an ordinary differential equation (ODE), 
\begin{align}
\zeta''(y) - y \zeta'(y) - q \zeta(y) = 0. \label{eq_ODE}
\end{align} 
A general solution of \eqref{eq_ODE} can be written as a linear combination of $F_{q}$ and $G_{q}$; see Section 4 of \cite{ekstrom2011optimal}.

In particular, when $q=1$, \eqref{def_F_G} is simplified to
\begin{align}
F_{1}(y) =   e^{{y^2} /2} \int_{-y}^\infty e^{- {u^2} / 2} \diff u =  \sqrt{2 \pi}  e^{{y^2} /2} \Phi (y) \quad \textrm{and} \quad G_{1}(y)  =  \sqrt{2 \pi}  e^{{y^2} /2} \Phi (-y), \quad y \in \R, \label{F_1_def}
\end{align}
where $\Phi$ denotes the standard normal distribution function, i.e.,
\[\Phi(y):=\frac{1}{\sqrt{2\pi}}\int_{-\infty}^y e^{-z^2/2}\,\mathrm{d}z, \quad y \in \R. \]
Consequently, we also have $(G_1 + F_{1})(y)  =  \sqrt{2 \pi}  \exp({y^2} /2)$ for all $y \in \R$.

\subsection{One-sided exit problem}
For fixed integer $n \geq 0$, consider the single stopping problem:
\begin{align}
U(t, x) := \sup_{t \leq \tau < 1}\e_{t,x} [ X_{\tau}^{2n+1}], \quad 0 \leq t < 1, \; x \in \R. \label{ospbridge_general}
\end{align}
Define the upcrossing time of the process $\{X_s/\sqrt{1-s}\}_{t \leq s \leq 1}$,
\begin{align}
\tau^+(B):=\inf\{s \geq t: X_s \geq B \sqrt{1-s} \}, \quad B \in \R. \label{tau_B}
\end{align}
Following the arguments as in \cite{Ekstrom}, we have, for any $B \in \R$,
\begin{align}
\e_{t,x} [X_{\tau^+(B)}^{2n+1}] = (1-t)^{n+1 / 2} \frac {B^{2n+1}} {F_{2n+1}(B)}{F_{2n+1} \Big(x / {\sqrt{1-t}}\Big)}, \quad x < B \sqrt{1-t}, \label{expectation_moments_2n_1}
\end{align}
which can be derived  by solving \eqref{eq_ODE} for $q = 2n+1$ and $y = x/ \sqrt{1-t}$ with its boundary conditions; see page 172 of \cite{Ekstrom}.

Ekstr\"{o}m and Wanntorp \cite{Ekstrom} show that \eqref{ospbridge_general} is solved by the stopping time \eqref{tau_B} by choosing
$B$ that maximizes \eqref{expectation_moments_2n_1} or equivalently the function $B \mapsto {B^{2n+1}}/ {F_{2n+1}(B)}$. Taking its derivative,
\begin{align*}
\frac \partial {\partial B}\frac {B^{2n+1}} {F_{2n+1}(B)} = \frac {B^{2n}} {F_{2n+1}(B)} \Big[ (2n+1)- \frac {B F_{2n+1}'(B)} {F_{2n+1}(B)} \Big], \quad B \in \R.
\end{align*}
The sign of the above is determined by that of the function 
\begin{align*}
B \mapsto (2n+1)- {B F_{2n+1}'(B)} / {F_{2n+1}(B)},
\end{align*}
 which is plotted in Figure \ref{plot_B_star}.
As is shown in \cite{Ekstrom}, it is monotonically decreasing and there exists a unique zero $B^* > 0$ such that
\begin{align}
B^* F'_{2n+1}(B^*) = (2n+1) F_{2n+1}(B^*) \label{B_star}
\end{align}
and
\begin{align}
\frac \partial {\partial B}\frac {B^{2n+1}} {F_{2n+1}(B)}> 0 \Longleftrightarrow B < B^*, \quad B \in \R. \label{monotonicity_B}
\end{align}

Define the candidate value function $U^*(t, x) := \e_{t,x} [ X_{\tau^+(B^*)}^{2n+1}]$ for $0 \leq t < 1$ and $x \in \R$. 
The verification of optimality requires the following lower bound on $B^*$; as it is not included in \cite{Ekstrom}, we shall give its proof.  Note that this is also confirmed in the numerical plots of Figure \ref{plot_B_star}.

\begin{lemma} \label{lemma_root_n}We have $B^* \geq \sqrt{n}$.
\end{lemma}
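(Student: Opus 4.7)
The plan is to combine a three-term recurrence for the functions $F_q$ with a Cauchy-Schwarz inequality applied at the point $B^*$. The key observation is that $F'_q(y)$ has a clean expression and that $F_q$ satisfies a useful recursion, so the defining equation \eqref{B_star} for $B^*$ can be rewritten into a form amenable to a direct inequality.

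First, I would note that differentiating under the integral in \eqref{def_F_G} gives $F'_q(y) = F_{q+1}(y)$ for every $q > 0$. Thus \eqref{B_star} is equivalent to
\begin{align*}
B^* F_{2n+2}(B^*) = (2n+1) F_{2n+1}(B^*).
\end{align*}
Next, integrating by parts the identity $\tfrac{\mathrm{d}}{\mathrm{d}u}\bigl(u^{q-1} e^{yu - u^2/2}\bigr) = (q-1) u^{q-2} e^{yu-u^2/2} + (y-u) u^{q-1} e^{yu-u^2/2}$ from $0$ to $\infty$ (the boundary terms vanish when $q>1$) yields the recurrence
\begin{align*}
F_{q+1}(y) = (q-1) F_{q-1}(y) + y F_q(y), \qquad q > 1, \; y \in \R.
\end{align*}

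Specializing this recurrence to $q = 2n+2$ and then substituting the rewritten \eqref{B_star} gives
\begin{align*}
F_{2n+3}(B^*) = (2n+1) F_{2n+1}(B^*) + B^* F_{2n+2}(B^*) = 2(2n+1) F_{2n+1}(B^*).
\end{align*}
At the same time, a Cauchy-Schwarz inequality applied to the integrand $u^{n+1/2} \cdot u^{n+3/2}\, e^{yu - u^2/2}$ produces the log-convexity bound
\begin{align*}
F_{2n+2}(y)^2 \leq F_{2n+1}(y)\, F_{2n+3}(y), \qquad y \in \R.
\end{align*}

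Evaluating at $y = B^*$ and combining the last two displays gives $F_{2n+2}(B^*)^2 \leq 2(2n+1) F_{2n+1}(B^*)^2$. Replacing $F_{2n+2}(B^*)$ using the rewritten \eqref{B_star} in the form $F_{2n+2}(B^*)/F_{2n+1}(B^*) = (2n+1)/B^*$ produces $\bigl((2n+1)/B^*\bigr)^2 \leq 2(2n+1)$, i.e.
\begin{align*}
B^* \geq \sqrt{\tfrac{2n+1}{2}} \geq \sqrt{n},
\end{align*}
which is the desired bound. The only mildly delicate step is identifying the right three-term recurrence and pairing it with Cauchy-Schwarz on consecutive indices so that the argument collapses in one line; everything else is routine.
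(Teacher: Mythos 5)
Your proof is correct, and it takes a genuinely different route from the paper's. The paper argues by contradiction using a probabilistic comparison: assuming $B^* < \sqrt{n}$, it picks $x/\sqrt{1-t} \in (B^*,\sqrt{n})$, applies It\^o's formula to $X_s^{2n+1}$, and derives a supermartingale/submartingale contradiction with the monotonicity \eqref{monotonicity_B}. Your argument is purely analytic: you observe $F_q' = F_{q+1}$, derive the three-term recurrence $F_{q+1}(y) = (q-1)F_{q-1}(y) + yF_q(y)$ by integrating a total derivative (valid for $q>1$, which covers $q=2n+2$ with $n\ge 0$), substitute the defining relation $B^* F_{2n+2}(B^*) = (2n+1)F_{2n+1}(B^*)$, and close with the Cauchy–Schwarz log-convexity bound $F_{2n+2}^2 \le F_{2n+1}F_{2n+3}$. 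The chain of inequalities checks out: one minor typo aside (the integrand split should read $u^n \cdot u^{n+1} \cdot e^{yu-u^2/2}$, matching $F_{2n+2}^2 \le F_{2n+1}F_{2n+3}$, rather than $u^{n+1/2}\cdot u^{n+3/2}$), the conclusion $B^* \ge \sqrt{(2n+1)/2}$ follows and is in fact strictly stronger than the paper's $B^* \ge \sqrt{n}$ for every $n\ge 0$. This buys you a self-contained, inequality-based proof with no appeal to stopping-time arguments, and it yields a sharper constant; the paper's probabilistic route has the advantage of working directly from the optimal-stopping structure and mirrors the later proof of Lemma \ref{lemma_root_q}, to which your method would also adapt (using $(F_q+G_q)$ in place of $F_q$, since the recurrence and Cauchy–Schwarz both respect the sum).
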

\begin{proof} See Appendix \ref{appendix_proof}.
\end{proof}

\begin{figure}[htbp]
\begin{center}
 \includegraphics[scale=0.65]{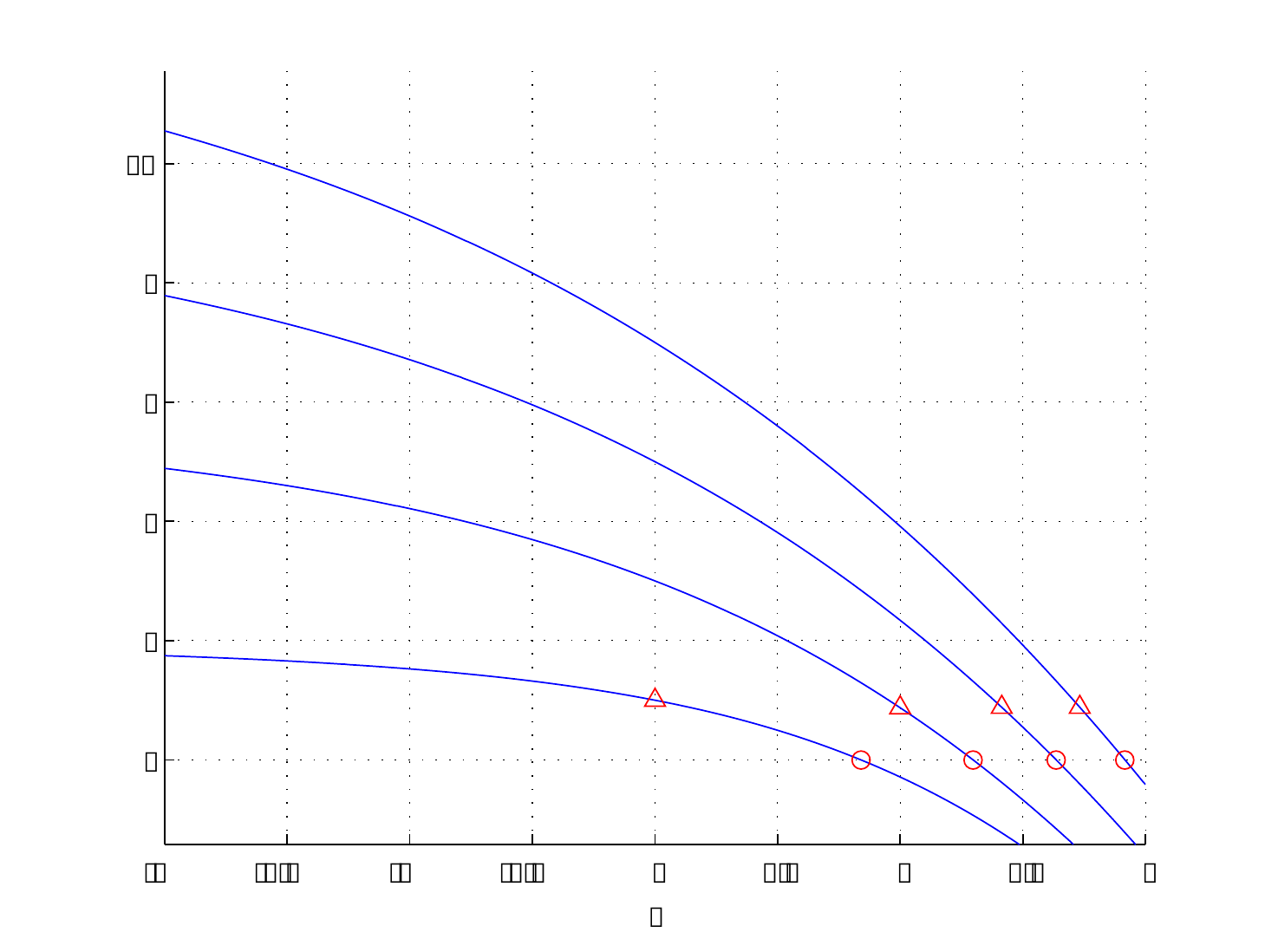}
\caption{Plots of the function $B \mapsto (2n+1) - B F'_{2n+1}(B) /F_{2n+1}(B)$ for $n=0,1,2, 3$. Triangles indicate the points at $\sqrt{n}$. Circles indicate the points at $B^*$.
} \label{plot_B_star}
\end{center}
\end{figure}

By Lemma \ref{lemma_root_n}, for $x > B^* \sqrt{1-t}$ (where $U^*(t,x) = x^{2n+1}$),
\begin{align*} \mathcal{L} U^*(t,x) =(2n+1) \left[  n-\frac {x^2} {1-t}   \right]  x^{2n-1} \leq 0.
\end{align*}
This together with the smooth fit at $B^* \sqrt{1-t}$ (which can be confirmed by simple algebra) verifies the optimality using martingale arguments via It\^o's formula.

\begin{theorem}[Ekstr\"{o}m and Wanntorp \cite{Ekstrom}, Theorems 2.1 and 3.1]
\begin{enumerate}
\item
An optimal stopping time for \eqref{ospbridge_general} is given by $\tau^+(B^*)$ and the value function $U(t,x)$ is given by
\begin{align} \label{expression_U_general}
U(t,x)=U^*(t,x)=\left\{\begin{array}{ll}
(1-t)^{n+1 /2}(B^*)^{2n+1} \frac {F_{2n+1} (x/\sqrt{1-t})} {F_{2n+1}(B^*)}, & \mbox{if $x<B^*\sqrt{1-t}$},\\
x^{2n+1}, &\mbox{if $x\geq B^*\sqrt{1-t}$}.
\end{array}
\right.
\end{align}
\item In particular, when $n=0$, $B^* \simeq 0.84$ is the unique solution to
\begin{align}
\sqrt{2\pi}(1-B^2)e^{B^2/2}\Phi(B)=B.\label{B_equality}
\end{align}
The value function $U(t,x)$ is given by, if $x<B^*\sqrt{1-t}$,
\begin{align} \label{expression_U}
\begin{split}
U(t,x) = U^*(t,x) &= \sqrt{1-t} B^* e^{x^2/(2(1-t)) - (B^*)^2/2}\Phi(x/\sqrt{1-t}) / \Phi(B^*) \\
&=\sqrt{2\pi(1-t)}(1-(B^*)^2)e^{x^2/(2(1-t))}\Phi(x/\sqrt{1-t}),
\end{split}
\end{align}
and it is equal to $x$ otherwise.
\end{enumerate}
\end{theorem}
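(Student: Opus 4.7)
The plan is a standard verification of optimality for part (1), followed by explicit substitution for part (2). For part (1), I would check the standard variational criteria on the candidate $U^*$ defined by \eqref{expression_U_general}: regularity ($U^* \in C^{1,2}$), $\mathcal{L} U^* \leq 0$, $U^*(t,x) \geq x^{2n+1}$, and $U^*(t,x) = \e_{t,x}[X_{\tau^+(B^*)}^{2n+1}]$. The smooth fit at the boundary $x = B^*\sqrt{1-t}$ is the only regularity issue; value-matching is immediate from $(1-t)^{n+1/2}(B^*)^{2n+1} = (B^*\sqrt{1-t})^{2n+1}$, and derivative-matching collapses to the defining equation \eqref{B_star} after cancelling the common factor $(1-t)^n$. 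On the continuation region $\{x < B^*\sqrt{1-t}\}$, $\mathcal{L} U^* \equiv 0$ follows from the ansatz $(1-t)^{n+1/2}\zeta(x/\sqrt{1-t})$ with $\zeta = F_{2n+1}$ and the ODE \eqref{eq_ODE}. On the stopping region $\{x \geq B^*\sqrt{1-t}\}$, the excerpt has already computed $\mathcal{L} U^*(t,x) = (2n+1)(n - x^2/(1-t))x^{2n-1}$, which is non-positive since $x^2/(1-t) \geq (B^*)^2 \geq n$ by Lemma \ref{lemma_root_n}.

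Next I would verify the obstacle inequality $U^*(t,x) \geq x^{2n+1}$: trivial on the stopping region, while on the continuation region, writing $y = x/\sqrt{1-t}$, it rearranges to $y^{2n+1}/F_{2n+1}(y) \leq (B^*)^{2n+1}/F_{2n+1}(B^*)$; for $y \leq 0$ the left side is non-positive while the right is strictly positive, and for $0 < y < B^*$ this is precisely the monotonicity \eqref{monotonicity_B}. With the criteria in hand, I would apply It\^o's formula to $U^*(s, X_s)$ on a localizing sequence $[t, \tau\wedge\sigma_k]$, take expectations using $\mathcal{L} U^* \leq 0$, and let $k \to \infty$ to get $\e_{t,x}[X_\tau^{2n+1}] \leq \e_{t,x}[U^*(\tau, X_\tau)] \leq U^*(t,x)$ for every admissible $\tau$. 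Equality for $\tau = \tau^+(B^*)$ follows because $\mathcal{L} U^* \equiv 0$ up to the hitting time and $U^* = g$ on the boundary. The main obstacle is verifying uniform integrability in this passage: $F_{2n+1}(X_s/\sqrt{1-s})$ may grow as $s \uparrow 1$, so one must exploit the integral representation of $F_{2n+1}$, moment bounds for the Brownian bridge (in particular $X_s \to 0$ a.s.\ as $s \uparrow 1$), and the bound \eqref{monotonicity_B} to control $U^*(s,X_s)$ on bounded sets of $s < 1$.

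For part (2), I would specialize to $n=0$. Using the closed form \eqref{F_1_def}, either integration by parts in \eqref{def_F_G} or direct differentiation gives the identity $F_1'(y) = y F_1(y) + 1$. The defining equation $B^* F_1'(B^*) = F_1(B^*)$ then becomes $B^*(B^* F_1(B^*) + 1) = F_1(B^*)$, i.e., $(1-(B^*)^2) F_1(B^*) = B^*$, which is exactly \eqref{B_equality}; the numerical value $B^* \simeq 0.84$ follows by solving this equation numerically, with uniqueness inherited from the uniqueness of $B^*$ as a root of the expression in \eqref{B_star}. Substituting $F_1(y) = \sqrt{2\pi}e^{y^2/2}\Phi(y)$ into \eqref{expression_U_general} for $n=0$ gives the first line of \eqref{expression_U} directly, and then using \eqref{B_equality} to replace $B^*/\Phi(B^*)$ by $\sqrt{2\pi}(1-(B^*)^2)e^{(B^*)^2/2}$ transforms it into the second line.
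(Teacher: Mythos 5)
Your proposal is correct and follows the same verification sketch the paper outlines in Section~\ref{section_preliminaries}: derive the candidate from the ODE \eqref{eq_ODE}, characterize $B^*$ via \eqref{B_star} and \eqref{monotonicity_B}, use Lemma~\ref{lemma_root_n} to get $\mathcal{L}U^*\leq 0$ on the stopping set, confirm smooth fit by algebra, and close with It\^o's formula plus a localization/uniform-integrability argument. Your part~(2) computation via the identity $F_1'(y)=yF_1(y)+1$ correctly reduces \eqref{B_star} to \eqref{B_equality} and yields both lines of \eqref{expression_U}.
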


\subsection{Two-sided exit problem}
Consider now, for fixed integer $q > 0$, the problem of maximizing the absolute value:
\begin{align}
\overline{U}(t, x) := \sup_{t \leq \tau < 1}\e_{t,x} [ |X_{\tau}|^{q}], \quad 0 \leq t < 1, \; x \in \R. \label{ospbridge_general_absolute}
\end{align}
It has been shown by \cite{Ekstrom} that the optimal stopping time is of the form:
\begin{align}
\tau(D):=\inf\{s \geq t: |X_s|  \geq D \sqrt{1-s}  \}, \quad D \geq 0.  \label{def_tau_D}
\end{align}
For $-D \sqrt{1-t} < x < D \sqrt{1-t}$, by  \cite{Ekstrom}, again solving \eqref{eq_ODE} with desired boundary conditions,
\begin{align}
\e_{t,x}[(  1-\tau(D))^{q/2} ] = (1-t)^{q/2} \frac {(F_{q}+G_{q})(x/\sqrt{1-t})} {(F_{q}+G_{q})(D)}, \label{moment_time_q}
\end{align}
and hence
\begin{align}
\e_{t,x} [|X_{\tau(D)}|^q] = D^q \e_{t,x}[(  1-\tau(D))^{q/2} ] = (1-t)^{q/2} D^q \frac {(F_{q}+G_{q})(x/\sqrt{1-t})} {(F_{q}+G_{q})(D)}. \label{moment_x_q}
\end{align}
Here notice that $(F_q+G_q)$ is an even function.

The maximization of this expectation is equivalent to maximizing the function $D \mapsto D^q / {(F_{q}+G_{q})(D)}$, whose derivative equals
\begin{align*}
\frac \partial {\partial D}\frac {D^q} {(F_{q}+G_{q})(D)} = \frac {D^{q-1}} {(F_{q}+G_{q})(D)} \Big[ q- \frac {D (F_{q}+G_{q})'(D)} {(F_{q}+G_{q})(D)} \Big], \quad D > 0.
\end{align*}
Similarly to the arguments above for $B^*$, there exists a maximizer $D^* > 0$, which is a unique root of
\begin{align}
0 = q-D (F_q  + G_q)' (D)/ (F_q + G_q) (D), \label{def_D}
\end{align}
and
\begin{align}
\frac \partial {\partial D}\frac {D^q} {(F_{q}+G_{q})(D)} > 0 \Longleftrightarrow D < D^*, \quad D > 0. \label{monotonicity_D}
\end{align}
We show in Figure \ref{plot_for_absolute_case} the function defined on the right-hand side of \eqref{def_D}.
Similarly to Lemma \ref{lemma_root_n}, we prove the following lower bound for $D^*$.

\begin{lemma} \label{lemma_root_q}We have $D^* \geq \sqrt{(q-1)/2 \vee 0}$.
\end{lemma}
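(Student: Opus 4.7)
The plan is to follow the same route as in Lemma~\ref{lemma_root_n}, reducing the assertion to a one-sided inequality that falls out of non-negativity of a variance. For $q \leq 1$ the bound is $D^* \geq 0$, which is trivial; assume then $q > 1$ and set $K := F_q + G_q$ and $D_0 := \sqrt{(q-1)/2}$. Since $H(D) := q - DK'(D)/K(D)$ is strictly decreasing with $H(0) = q > 0$, it suffices to prove $H(D_0) \geq 0$, i.e.\ $q K(D_0) \geq D_0 K'(D_0)$.

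The first step is a decomposition. Using $F_q'(y) = F_{q+1}(y)$ and $G_q'(y) = -G_{q+1}(y)$, so $K'(D) = F_{q+1}(D) - G_{q+1}(D)$, one has
\begin{equation*}
q K(D_0) - D_0 K'(D_0) = \bigl[q F_q(D_0) - D_0 F_{q+1}(D_0)\bigr] + \bigl[q G_q(D_0) + D_0 G_{q+1}(D_0)\bigr].
\end{equation*}
The second bracket is strictly positive since $q, D_0 > 0$ and $G_k(D_0) > 0$ for each $k > 0$. Hence the substantive task is the one-sided inequality $F_{q+1}(D_0)/F_q(D_0) \leq q/D_0$.

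The key idea is a variance argument. Let $\hat U$ be a random variable with density proportional to $u^{q-1} e^{-(u-D_0)^2/2}$ on $(0,\infty)$, equivalently $\propto u^{q-1} e^{D_0 u - u^2/2}$ after completing the square. By construction, $\e[\hat U] = F_{q+1}(D_0)/F_q(D_0)$ and $\e[\hat U^2] = F_{q+2}(D_0)/F_q(D_0)$. The recursion $F_{q+2}(y) = y F_{q+1}(y) + q F_q(y)$ (from integration by parts, boundary terms vanishing for $q > 0$) yields at $y = D_0$ the Stein-type identity
\begin{equation*}
\e[\hat U^2] = q + D_0\, \e[\hat U].
\end{equation*}
Non-negativity of the variance then forces $\e[\hat U]^2 - D_0\,\e[\hat U] - q \leq 0$, and so $\e[\hat U] \leq \tfrac{1}{2}\bigl(D_0 + \sqrt{D_0^2 + 4q}\bigr)$.

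It remains to verify $\tfrac{1}{2}\bigl(D_0 + \sqrt{D_0^2 + 4q}\bigr) \leq q/D_0$ at $D_0^2 = (q-1)/2$. Substituting $D_0^2 + 4q = (9q-1)/2$ and clearing denominators reduces this to $\sqrt{(q-1)(9q-1)} \leq 3q+1$; as both sides are non-negative for $q \geq 1$, squaring simplifies it to $16q \geq 0$, which is immediate. The step I expect to be the main obstacle is spotting the variance identity $\e[\hat U^2] = q + D_0\,\e[\hat U]$, since it is this combination of the recursion with the probabilistic reading of $F_{q+1}/F_q$ and $F_{q+2}/F_q$ that produces a sharp quadratic bound on $\e[\hat U]$; once that is in place, the remaining algebra is routine and the specific choice $D_0 = \sqrt{(q-1)/2}$ turns out to be exactly calibrated for the final step to succeed.
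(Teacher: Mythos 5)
Your proof is correct, but it takes a genuinely different route from the paper's. The paper proves Lemma~\ref{lemma_root_q} by a probabilistic contradiction argument that mirrors its proof of Lemma~\ref{lemma_root_n}: it assumes $D^* < \sqrt{(q-1)/2}$, picks a starting point $x/\sqrt{1-t}$ strictly between $D^*$ and $\sqrt{(q-1)/2}$, applies It\^o's formula to $|X_s|^q$ noting that its drift $q\big[(q-1)/2 - X_s^2/(1-s)\big]|X_s|^{q-2}$ is nonnegative in that band, concludes $\e_{t,x}[|X_{\tau(\sqrt{(q-1)/2})}|^q]\geq x^q$, and contradicts the strict inequality from \eqref{moment_x_q} and \eqref{monotonicity_D}. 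You instead work entirely analytically: you reduce the bound to $H(D_0)\geq 0$ for $H(D)=q-D(F_q+G_q)'(D)/(F_q+G_q)(D)$ via the sign characterization \eqref{monotonicity_D}, split $qK(D_0)-D_0K'(D_0)$ into an $F$-part and a $G$-part, discard the manifestly positive $G$-part, read $F_{q+1}/F_q$ and $F_{q+2}/F_q$ as the first and second moments of a tilted distribution, invoke the recursion $F_{q+2}=yF_{q+1}+qF_q$, and let non-negativity of variance produce a quadratic bound on the first moment which the choice $D_0=\sqrt{(q-1)/2}$ satisfies after routine algebra. Both arguments hinge on \eqref{monotonicity_D}; the paper's buys uniformity with the $B^*$ proof and stays within the stochastic toolkit of Section~\ref{section_preliminaries}, while yours avoids It\^o entirely, uses only the integral representation \eqref{def_F_G} and integration by parts, and is a clean display of the moment structure hidden in $F_q$. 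One small point: you state that $H$ is strictly decreasing, which the paper does not literally assert for the $D^*$ case — what you actually need and what \eqref{monotonicity_D} supplies is that $H>0$ iff $D<D^*$, and your chain of reasoning only uses that sign characterization, so the argument stands.
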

\begin{proof} See Appendix \ref{appendix_proof}.
\end{proof}

\begin{figure}[htbp]
\begin{center}
 \includegraphics[scale=0.65]{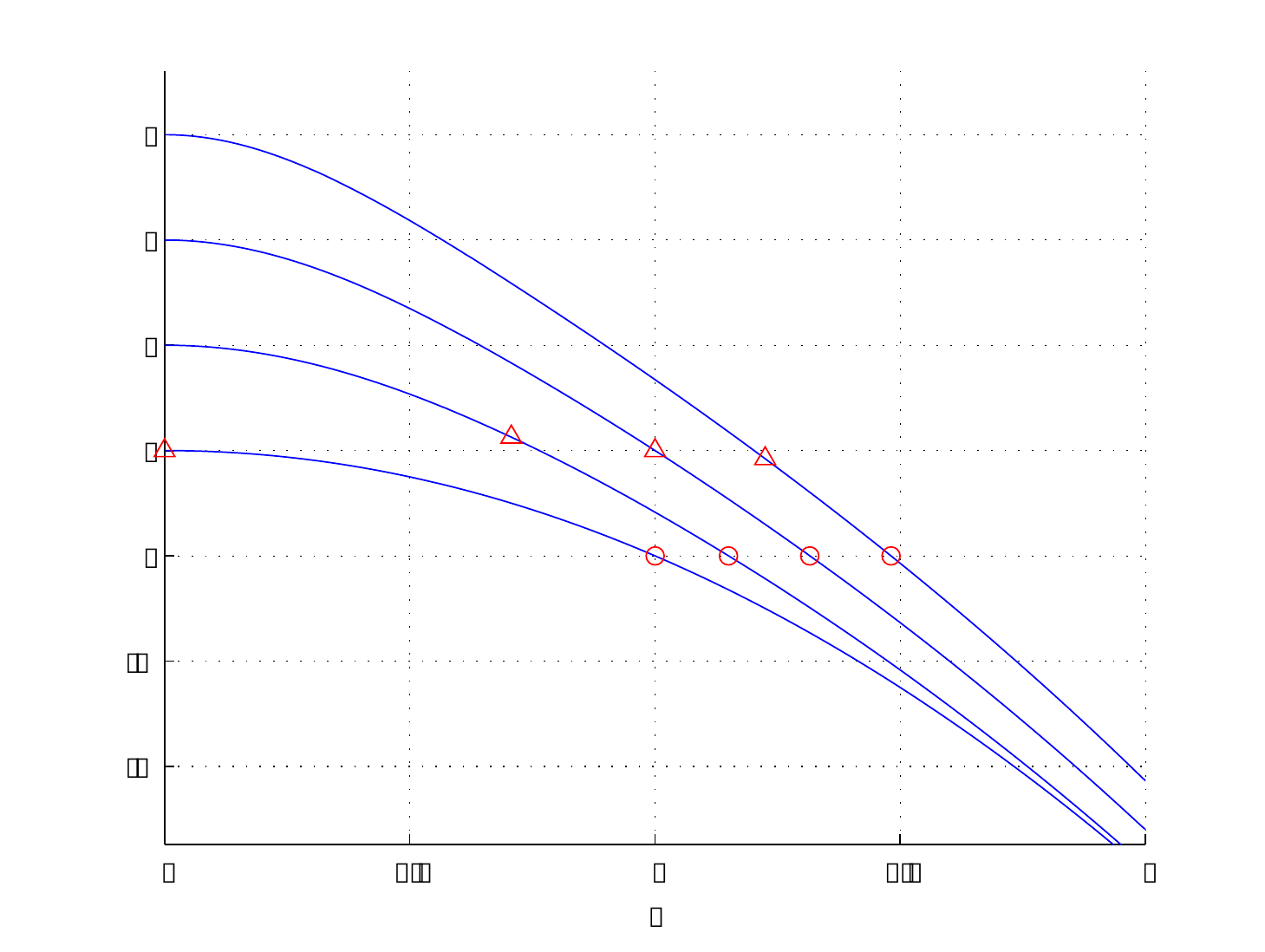}
\caption{Plots of the function $D \mapsto q-D (F_q  + G_q)' (D)/ (F_q + G_q) (D)$ for $q=1,2,3,4$.  The triangles indicate the points at $\sqrt{(q-1)/2}$. Circles indicate the points at $D^*$.} \label{plot_for_absolute_case}
\end{center}
\end{figure}

Define the candidate value function $\overline{U}^*(t, x) := \e_{t,x} [ |X_{\tau(D^*)}|^{q}]$ for $0 \leq t < 1$ and $x \in \R$.  Again, Lemma \ref{lemma_root_q} shows,  for $|x| > D^* \sqrt{1-t}$, that 
\begin{align*}
\mathcal{L} \overline{U}^*(t,x) = q \left[  \frac{q-1} 2 -\frac {|x|^2} {1-t}   \right]  |x|^{q-2} \leq 0.
\end{align*}  This together with the smooth fit at $D^* \sqrt{1-t}$ and $-D^* \sqrt{1-t}$ verifies the optimality.
\begin{theorem}[Ekstr\"{o}m and Wanntorp \cite{Ekstrom}, Theorem 3.2]
An optimal stopping time for \eqref{ospbridge_general_absolute} is given by $\tau(D^*)$ and the value function $\overline{U}(t,x)$ is given by
\begin{align}  \label{U_bar_expression}
\overline{U}(t,x) = \overline{U}^*(t, x) =\left\{\begin{array}{ll}
(1-t)^{q/2} (D^*)^q \frac {(F_{q}+G_{q})(x/\sqrt{1-t})} {(F_{q}+G_{q})(D^*)}, & \mbox{if $|x|< D^*\sqrt{1-t}$},\\
|x|^{q}, &\mbox{if $|x| \geq  D^*\sqrt{1-t}$}.
\end{array}
\right.
\end{align}
\end{theorem}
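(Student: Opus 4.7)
The plan is to carry out a standard verification argument for the candidate $\overline{U}^*(t,x)$ defined by the right-hand side of \eqref{U_bar_expression}, showing (i) $\overline{U}^*(t,x) \geq |x|^q$ pointwise, (ii) $\overline{U}^*$ is smooth enough to apply It\^{o}'s formula with $\mathcal{L}\overline{U}^* \leq 0$ off the free boundary, and (iii) the process $\overline{U}^*(s,X_s)$ is a supermartingale that coincides with a martingale up to $\tau(D^*)$; optionality then delivers both the upper bound and the matching lower bound.

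First I would verify the construction of $\overline{U}^*$. On the continuation region $\{|x|<D^*\sqrt{1-t}\}$ the function has the form $(1-t)^{q/2}\zeta(x/\sqrt{1-t})$ with $\zeta$ a linear combination of $F_q$ and $G_q$, so by the derivation leading to \eqref{eq_ODE} we have $\mathcal{L}\overline{U}^*=0$ there. On the stopping region $\{|x|>D^*\sqrt{1-t}\}$ a direct computation (already recorded just before the theorem) yields
\begin{align*}
\mathcal{L}\overline{U}^*(t,x) = q\left[\frac{q-1}{2}-\frac{|x|^2}{1-t}\right]|x|^{q-2},
\end{align*}
which is nonpositive on this set by Lemma \ref{lemma_root_q}. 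Smooth fit at $x=\pm D^*\sqrt{1-t}$ follows from the first-order condition \eqref{def_D} defining $D^*$: since $(F_q+G_q)$ is even, matching values and derivatives across $\pm D^*\sqrt{1-t}$ both reduce to \eqref{def_D}. Together with the continuity of $\overline{U}^*$ this gives $\overline{U}^*\in C^{1,2}$ except possibly on the free boundary, where smooth fit allows an It\^{o}-type expansion by a standard mollification or the usual appeal to the local time formula. Next I would verify the pointwise domination $\overline{U}^*(t,x)\geq |x|^q$: on the stopping region this is equality, and on the continuation region it follows because by definition $\overline{U}^*(t,x) = \e_{t,x}[|X_{\tau(D^*)}|^q]$, which bounds $|x|^q$ from above via the trivial admissible stopping time $\tau \equiv t$ combined with the observation that the supremum in \eqref{ospbridge_general_absolute} defining the class is no smaller.

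With these ingredients in hand, applying It\^{o}'s formula to $\overline{U}^*(s,X_s)$ between $t$ and an arbitrary admissible pair of stopping times (here a single $\tau$ suffices since this is the single-stopping problem) produces
\begin{align*}
\overline{U}^*(\tau\wedge \sigma_n, X_{\tau\wedge \sigma_n}) = \overline{U}^*(t,x) + \int_t^{\tau\wedge \sigma_n}\mathcal{L}\overline{U}^*(s,X_s)\,\mathrm{d}s + M_{\tau\wedge \sigma_n},
\end{align*}
for a localizing sequence $\sigma_n\uparrow 1$ and a local martingale $M$. Taking $\e_{t,x}$ and using $\mathcal{L}\overline{U}^*\leq 0$ together with $\overline{U}^*\geq |x|^q$ yields $\overline{U}^*(t,x)\geq \e_{t,x}[|X_\tau|^q]$ for every admissible $\tau$, hence $\overline{U}^*\geq \overline{U}$. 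The reverse inequality is automatic since $\tau(D^*)$ is admissible and $\overline{U}^*(t,x)=\e_{t,x}[|X_{\tau(D^*)}|^q]$ by construction via \eqref{moment_x_q}.

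The main obstacles are the two limiting steps: first, justifying passage to the limit $\sigma_n\uparrow 1$ in the martingale term (i.e.\ uniform integrability of $\{\overline{U}^*(\tau\wedge \sigma_n, X_{\tau\wedge \sigma_n})\}_n$), which requires bounding $\overline{U}^*$ in terms of an integrable envelope using the explicit growth of $F_q+G_q$ and the fact that $X_s\to 0$ a.s.\ as $s\uparrow 1$ under $\p_{t,x}$; and second, handling the mild non-$C^2$ behaviour at $\pm D^*\sqrt{1-t}$, which is standard given smooth fit but needs to be stated cleanly (e.g.\ via a Meyer--It\^{o} argument, noting that $\overline{U}^*$ is convex on each side of the boundary so the local time term is nonnegative and can be absorbed into the supermartingale direction).
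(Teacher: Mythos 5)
Your verification plan is essentially the one the paper sketches just before the theorem (and the one Ekstr\"om--Wanntorp carry out): compute $\mathcal{L}\overline{U}^*$ on each region, invoke Lemma \ref{lemma_root_q} for the sign on the stopping region, smooth fit at $\pm D^*\sqrt{1-t}$ from the first-order condition \eqref{def_D}, then a localized It\^o/optional-stopping argument. The obstacles you flag (uniform integrability along the localizing sequence and mild loss of $C^{1,2}$ at the free boundary) are indeed the right ones to worry about, and your remarks on handling them are reasonable.

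The one genuine gap is in the domination step $\overline{U}^*(t,x) \geq |x|^q$ on the continuation region. Your stated argument --- via the trivial stopping time $\tau \equiv t$ combined with ``the supremum in \eqref{ospbridge_general_absolute} is no smaller'' --- yields only $\overline{U}(t,x) \geq |x|^q$ (trivially true) and $\overline{U}^*(t,x) \leq \overline{U}(t,x)$ (admissibility of $\tau(D^*)$). These two inequalities point the wrong way; combining them to conclude $\overline{U}^*(t,x) \geq |x|^q$ tacitly presumes $\overline{U}^* = \overline{U}$, which is precisely what the theorem asserts. The fix is exactly the mechanism the paper uses in its own domination lemmas (Lemmas \ref{lemma_domination_prob1}, \ref{lemma_domination_prob2} and \ref{lemma_domination_prob3}): invoke the maximality of $D^*$ within the threshold family via \eqref{monotonicity_D}. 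For $|x| < D^*\sqrt{1-t}$, set $D := |x|/\sqrt{1-t} < D^*$; since $F_q+G_q$ is even, $(F_q+G_q)(x/\sqrt{1-t}) = (F_q+G_q)(D)$, and hence
\begin{align*}
\overline{U}^*(t,x) = (1-t)^{q/2}(F_q+G_q)\left(\frac{x}{\sqrt{1-t}}\right)\frac{(D^*)^q}{(F_q+G_q)(D^*)}
\geq (1-t)^{q/2}(F_q+G_q)\left(\frac{x}{\sqrt{1-t}}\right)\frac{D^q}{(F_q+G_q)(D)} = |x|^q,
\end{align*}
equivalently $\overline{U}^*(t,x) = \e_{t,x}[|X_{\tau(D^*)}|^q] \geq \e_{t,x}[|X_{\tau(|x|/\sqrt{1-t})}|^q] = |x|^q$. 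This inequality is needed before you can pass from $\e_{t,x}[\overline{U}^*(\tau\wedge\sigma_n, X_{\tau\wedge\sigma_n})]$ to $\e_{t,x}[|X_{\tau\wedge\sigma_n}|^q]$ in the supermartingale estimate, so the gap must be closed.
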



\section{Problem 1} \label{section_problem1}
We first solve the optimal double stopping problem of the form:
\[V(t,x) := \sup_{t \leq \tau_1 \leq \tau_2< 1}\e_{t,x} \left[X_{\tau_2} -  X_{\tau_1} \right], \quad 0 \leq t < 1, \quad x \in \R. \]
First, by the strong Markov property, we can rewrite this as a two-stage problem:
\begin{align}
V(t,x)=  \sup_{t \leq \tau < 1}\e_{t,x} \left[  f(\tau,X_{\tau})\right] \label{V_def}
\end{align}
where
\begin{align*}
f(t,x) := U(t,x) - x, \quad 0 \leq t < 1, \; x \in \R,
\end{align*}
with $U(t,x)$ defined in \eqref{expression_U} as the value function of a single stopping problem.

It is expected that the first optimal stopping time is of the form
\begin{align}
\tau^-(C):=\inf\{s \geq t: X_s  \leq C \sqrt{1-s} \}, \label{lower_crossing_time}
\end{align}
for some $C \in \R$.  The corresponding second stopping time becomes  $\inf\{s \geq \tau^-(C) : X_s  \geq  B^* \sqrt{1-s}  \}$.

For $C \in \R$ and $x>C\sqrt{1-t}$, by \eqref{F_1_def}, \eqref{expectation_moments_2n_1} and symmetry,
\begin{multline*}
\e_{t,x} \left[C \sqrt{1-\tau^-(C)}\right] = \e_{t,x} \left[  X_{\tau^-(C)}\right]
=-\e_{t,-x} \left[  X_{\tau^+(-C)}\right]
\\
=\frac{C}{\Phi(-C)}e^{-C^2/2}\sqrt{1-t}\Phi\left(-{x} /{\sqrt{1-t}}\right)e^{x^2/(2(1-t))}.
\end{multline*}

Now we focus on the function, for $C \leq B^*$, 
\begin{align}
V_C(t,x) := \e_{t,x} \left[  f(\tau^-(C),X_{\tau^-(C)})\right], \quad 0 \leq t < 1, \; x \in \R. \label{V_C_def}
\end{align}
This becomes $f(t,x)$ for $x\leq C\sqrt{1-t}$ whereas for $x>C\sqrt{1-t}$, by \eqref{expression_U},
\begin{eqnarray*}
V_C(t,x)
&=&\e_{t,x} \left[  \sqrt{2\pi(1-\tau^-(C))}(1-(B^*)^2)e^{C^2/2}\Phi(C)  -  X_{\tau^-(C)}\right]\\
&=& \left( \sqrt{2\pi}(1-(B^*)^2)e^{C^2/2}\frac{\Phi(C)}{C} -1\right)\e_{t,x}\left[ C \sqrt{1-\tau^-(C)} \right]\\
&=&v(C) \sqrt{1-t}\sqrt{2 \pi} \Phi\left(-{x}/{\sqrt{1-t}}\right)e^{x^2/(2(1-t))},
\end{eqnarray*}
where we define
\begin{align}
v(C):=  \frac 1 {\Phi(-C)} \Big[ (1-(B^*)^2){\Phi(C)} -C\frac {e^{-C^2/2}} {\sqrt{2 \pi}} \Big], \quad C \leq B^*.  \label{v_single}
\end{align}

The idea now is to identify $C$ that maximizes $V_C(\cdot,\cdot)$ (or equivalently $v(\cdot)$) and then use a verification lemma to show the optimality of the corresponding strategy.  
Hence, we consider optimizing the function $v(C)$ on $(-\infty,B^*]$.  Figure \ref{figure_plot_C} shows a plot of this function (using the definition of $B^*$ above).  
It is remarked that only the maximality of $v(C)$ and $V_C$ over $(-\infty, B^*]$ is needed; Lemma \ref{lemma_maximality_c_star} below is used for the proof of Lemma  \ref{lemma_domination_prob1}, where only the maximality over $(-\infty, B^*]$ is necessary.
It is clearly suboptimal to choose $C>B^*$ as the corresponding strategies would lead to an expected pay-off of zero at or above the boundary $x=C\sqrt{1-t} > B^* \sqrt{1-t}$, with the first and second stoppings happening at the same time.
\begin{figure}[htbp]
 \includegraphics[scale=0.6]{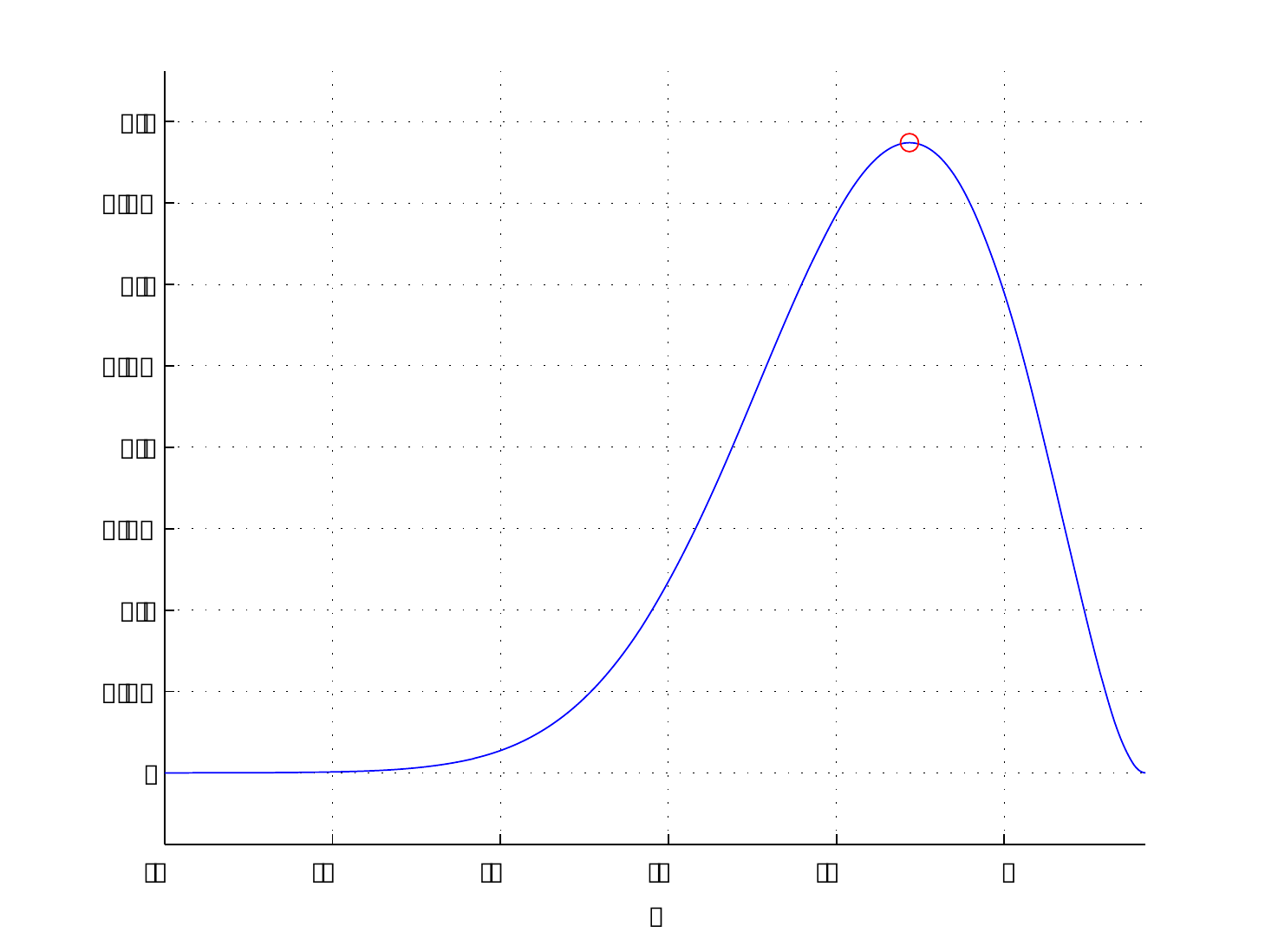}
\caption{The function $v$ on $[-5, B^*]$.  The graph indicates that it has a unique maximum  $C^* \simeq-0.564$ (the point indicated by the circle); note that $|C^*|$ is smaller than $B^*\simeq 0.84$.} \label{figure_plot_C}
\end{figure}

\begin{lemma} \label{lemma_maximality_c_star}There exists a unique  $C^* < 0$ that maximizes $v(\cdot)$ over $(-\infty,B^*]$ such that $u(C^*) = 0$ where we define
\[u(C) :=1-(B^*)^2-(1-C^2)\Phi(-C)-\frac{C}{\sqrt{2\pi}}e^{-C^2/2}, \quad C \leq B^*. \]
\end{lemma}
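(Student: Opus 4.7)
The plan is to reduce the statement to an analysis of the zeros of $u$ via a direct computation of $v'$. Writing $\varphi(y) = (2\pi)^{-1/2} e^{-y^2/2}$ for the standard normal density, the identity $\Phi(-C)\, v(C) = (1-(B^*)^2)\Phi(C) - C \varphi(C)$ can be differentiated on both sides and, after using $\Phi(C) + \Phi(-C) = 1$ together with $\varphi'(C) = -C\varphi(C)$, collapses to the clean expression
\[
v'(C) = \frac{\varphi(C)}{\Phi(-C)^{2}} \, u(C).
\]
In particular, the critical points of $v$ on $(-\infty, B^*]$ coincide with the zeros of $u$, and the sign of $v'$ matches the sign of $u$.

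The next step is to understand $u$ itself via its derivative, where a direct computation produces the striking cancellation
\[
u'(C) = 2\, C \, \Phi(-C),
\]
so $u$ is strictly decreasing on $(-\infty, 0]$ and strictly increasing on $[0, B^*]$. As $C \to -\infty$ one has $(1-C^2)\Phi(-C) \to -\infty$ while $C\varphi(C) \to 0$, yielding $u(-\infty) = +\infty$. At the other end, the defining identity \eqref{B_equality} for $B^*$ rewrites as $(1-(B^*)^2)\Phi(B^*) = B^*\varphi(B^*)$; substituting this into $u(B^*)$ after factoring $(1-(B^*)^2)(1 - \Phi(-B^*))$ gives $u(B^*) = 0$. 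Since $B^* > 0$ and $u$ is strictly increasing on $[0, B^*]$, this forces $u(0) < 0$ for free, with no need for any numerical estimate of $B^*$.

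Assembling the pieces, on $(-\infty, 0]$ the function $u$ strictly decreases from $+\infty$ to $u(0) < 0$, so by the intermediate value theorem there is a unique $C^* \in (-\infty, 0)$ with $u(C^*) = 0$; moreover $u > 0$ on $(-\infty, C^*)$ and $u < 0$ on $(C^*, 0]$. On $[0, B^*]$, $u$ strictly increases from $u(0) < 0$ to $u(B^*) = 0$, so $u < 0$ throughout $[0, B^*)$. Translating back via $v'(C) = \varphi(C)\Phi(-C)^{-2} u(C)$, we see that $v$ strictly increases on $(-\infty, C^*]$ and strictly decreases on $[C^*, B^*]$, so $C^*$ is the unique maximizer of $v$ on $(-\infty, B^*]$. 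The only mildly delicate step is spotting the two algebraic collapses—the one producing $v'(C) \propto u(C)$ and the one producing $u'(C) = 2C\Phi(-C)$—but both fall out routinely from $\varphi'(C) = -C\varphi(C)$ and $\Phi(C) + \Phi(-C) = 1$, so no deeper idea appears to be required.
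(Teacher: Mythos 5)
Your proof is correct and matches the paper's on the core computations: the collapse of $v'$ to $e^{-C^2/2}u(C)/(\sqrt{2\pi}\,\Phi(-C)^2)$, the identity $u'(C)=2C\Phi(-C)$, the limit $u(-\infty)=+\infty$, and the resulting monotonicity and uniqueness of the zero $C^*$ on $(-\infty,0)$. Where you diverge, and in fact improve, is at the right endpoint. The paper asserts $u(0)=1/2-(B^*)^2<0$, which tacitly relies on the numerical estimate $B^*\simeq 0.84>1/\sqrt{2}$, and then handles $(0,B^*]$ by noting that $v$ there is bounded by $\max\{v(0),v(B^*)\}$ and that $v(B^*)=0<v(C^*)$. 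You instead substitute the defining relation \eqref{B_equality} (equivalently $(1-(B^*)^2)\Phi(B^*)=B^*e^{-(B^*)^2/2}/\sqrt{2\pi}$) directly into $u$ to obtain the exact cancellation $u(B^*)=0$; combined with $u$ strictly increasing on $[0,B^*]$, this delivers $u(0)<0$ analytically, and moreover $u<0$, hence $v$ strictly decreasing, on all of $[0,B^*)$. That makes $C^*$ the unique maximizer on $(-\infty,B^*]$ with no endpoint comparison needed. Both $v(B^*)=0$ and $u(B^*)=0$ are consequences of \eqref{B_equality}, but routing the identity through $u$ rather than $v$ yields the tighter, numerics-free argument.
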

\begin{proof}
For all $C \leq B^*$, using $\Phi(C) + \Phi(-C) =1$, 
\begin{align} \label{f_derivative_h}
\begin{split}
v'(C)
&=\frac{e^{-C^2/2}}{\sqrt{2 \pi} (\Phi(-C))^2}u(C),
\end{split}
\end{align}
and
\begin{eqnarray*}
u'(C)=
2C\Phi(-C)-\frac{C^2-1}{\sqrt{2\pi}}e^{-C^2/2}-\frac{1}{\sqrt{2\pi}}e^{-C^2/2}+\frac{C^2}{\sqrt{2\pi}}e^{-C^2/2}
= 2C\Phi(-C).
\end{eqnarray*}
On $(-\infty, 0)$, $u'(C)$ is uniformly negative.  Moreover, $u(-\infty)=\infty$ and $u(0)=1/2-(B^*)^2<0$. Thanks to the continuity of $u$, this implies that there is a unique solution to the equation $u(C)=0$ on $(-\infty,0)$, which we call $C^*$.

It remains to show that $C^*$ indeed maximizes the function $v$ over $(-\infty, B^*]$. From \eqref{f_derivative_h}, we see that $v'(C)$ and $u(C)$ are of the same sign. Hence, on $(-\infty,0)$, $v$ is strictly increasing on $(-\infty,C^*)$ and is strictly decreasing on $(C^*,0)$ showing that $C^*$ is the unique maximizer on $(-\infty,0]$.

We now extend this result to the domain $(0, B^*]$.
On $(0,B^*]$, $u'$ is uniformly positive and  hence $u$ is monotonically increasing.  Hence, $v$ will be strictly increasing on $(z,\infty)$ as soon as $v'(z)=0$ (or $u(z)=0$) for some $z>0$.  This means that $v$ on $(0, B^*]$ is dominated by the maximum value of $v(0)$ and $v(B^*)$. Finally,  observe that $v(B^*)=0$ (by how $B^*$ is chosen as in \eqref{B_equality}), which is smaller than $v(C^*)$. This completes the proof.
\end{proof}

Now we define our candidate value function, for  $0 \leq t < 1$ and $x \in \R$,
\begin{align}
V^*(t,x) := V_{C^*}(t,x) = \left\{ \begin{array}{ll} \sqrt{1-t} \sqrt{2 \pi}\Phi\left(-{x}/ {\sqrt{1-t}}\right)e^{x^2/(2(1-t))} v(C^*), & x > C^* \sqrt{1-t} \\ f(t,x), & x \leq C^* \sqrt{1-t} \end{array} \right\}. \label{def_candidate}
\end{align}
We plot, in Figure \ref{plot_prob1}, the functions $V^*$ and $f$ for fixed $t = 0$ as a function of $x$; this suggests Lemmas \ref{lemma_domination_prob1} and \ref{lemma_smooth_fit}, which we shall prove analytically below. 

\begin{figure}[htbp]
\begin{center}
 \includegraphics[scale=0.65]{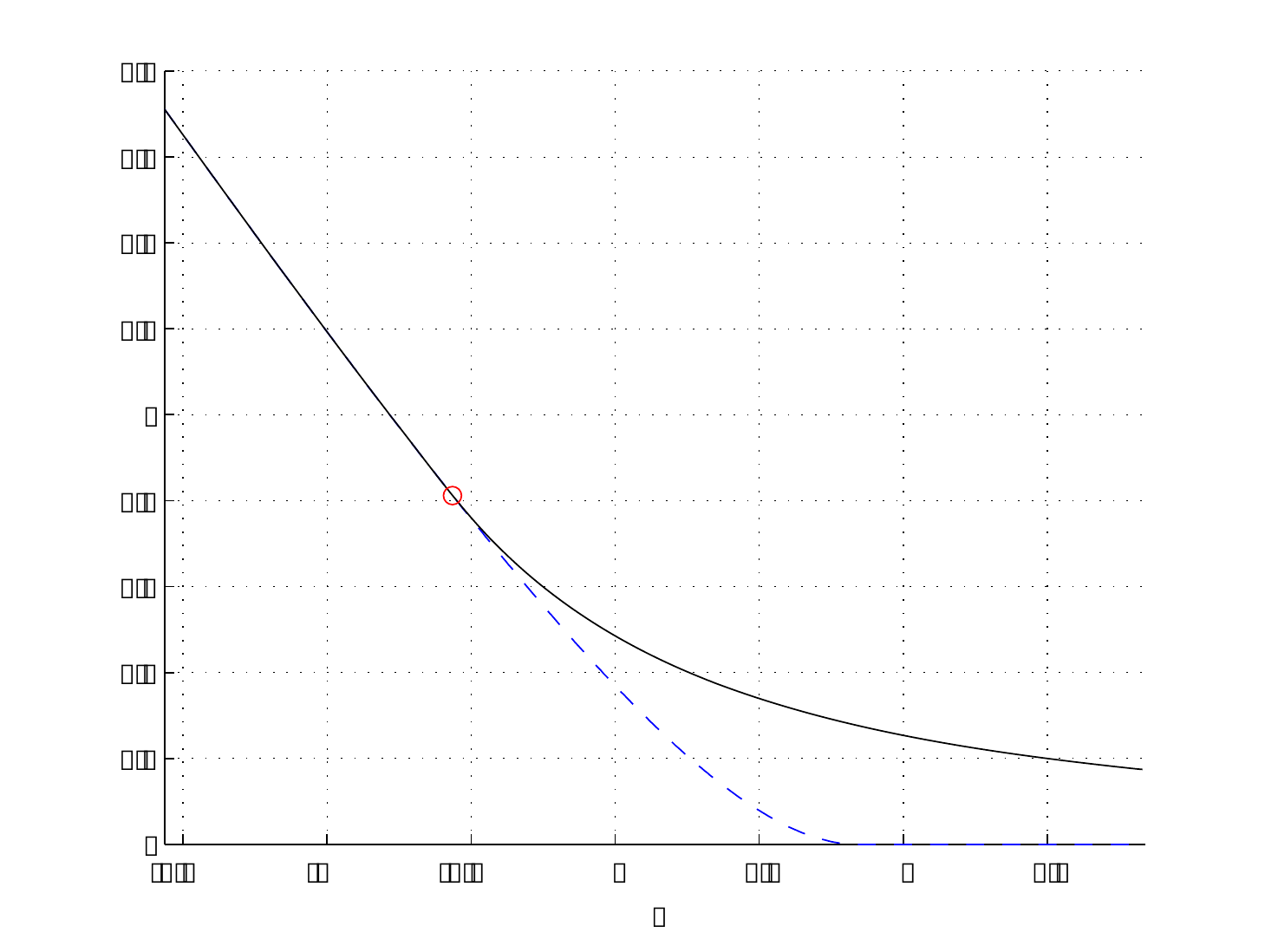}
\caption{Plots of $V^*(0,\cdot)$ (solid) and $f(0,\cdot)$ (dotted). The circle indicates the point at $C^*$.} \label{plot_prob1}
\end{center}
\end{figure}

\begin{lemma} \label{lemma_domination_prob1}
We have $V^*(t,x)\geq f(t,x)$, for any $0 \leq t < 1$ and $x \in \R$.
\end{lemma}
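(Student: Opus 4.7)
The plan is to split $\R$ into three $x$-regions and reduce the inequality $V^*(t,x) \geq f(t,x)$ to the maximality of $v(C^*)$ already established in Lemma \ref{lemma_maximality_c_star}. In the region $\{x \leq C^*\sqrt{1-t}\}$ the claim holds with equality, directly from the piecewise definition \eqref{def_candidate} of $V^*$, so nothing needs to be checked there.

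The substantive case is the intermediate region $C^*\sqrt{1-t} < x \leq B^*\sqrt{1-t}$. I would set $C := x/\sqrt{1-t}$, so that $C \in (C^*, B^*]$ and $x$ sits exactly on the boundary $x = C\sqrt{1-t}$ of the continuation region associated with the strategy $\tau^-(C)$ in \eqref{lower_crossing_time}. Under $\p_{t,x}$ the hitting time $\tau^-(C)$ then equals $t$ almost surely, so immediate stopping yields $V_C(t,x) = f(t,x)$; on the other hand, the explicit formula for $V_C$ derived just above \eqref{v_single} shows that this common value equals $v(C)\sqrt{1-t}\sqrt{2\pi}\,\Phi(-x/\sqrt{1-t})\,e^{x^2/(2(1-t))}$. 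Since $V^*(t,x)$ has the identical expression with $v(C)$ replaced by $v(C^*)$, the difference $V^*(t,x)-f(t,x)$ factors as $(v(C^*)-v(C))$ times a strictly positive quantity, and Lemma \ref{lemma_maximality_c_star} closes the case because $C \in (-\infty, B^*]$.

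For the remaining region $x > B^*\sqrt{1-t}$, the second case of \eqref{expression_U_general} (with $n=0$) gives $U(t,x) = x$ and hence $f(t,x) = 0$, so it suffices to check $V^*(t,x) > 0$. This reduces to $v(C^*) > 0$, which I would deduce from the fact that $v(B^*) = 0$ is precisely the defining equation \eqref{B_equality} for $B^*$, combined with the strict maximality of $C^*$ on $(-\infty, B^*]$ from Lemma \ref{lemma_maximality_c_star} (which forces $v(C^*) > v(B^*) = 0$ since $C^* < 0 < B^*$).

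I do not anticipate a serious obstacle; the conceptual core of the argument is the boundary identity $V_C(t, C\sqrt{1-t}) = f(t, C\sqrt{1-t})$, which converts the functional inequality $V^* \geq f$ into the pointwise scalar comparison $v(C^*) \geq v(C)$, after which the verification is essentially algebraic.
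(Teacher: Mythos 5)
Your proposal is correct and follows essentially the same route as the paper's proof: the paper likewise splits into the three regions $x \leq C^*\sqrt{1-t}$, $C^*\sqrt{1-t} < x < B^*\sqrt{1-t}$, and $x \geq B^*\sqrt{1-t}$, handles the middle case by the continuous-fit identity $V_{x/\sqrt{1-t}}(t,x) = f(t,x)$ together with the maximality of $C^*$ over $(-\infty, B^*]$ from Lemma \ref{lemma_maximality_c_star}, and disposes of the upper region by noting $f = 0$ there while $V^* \geq 0$. Your added observation that $V^*(t,x) > 0$ on the upper region reduces to $v(C^*) > v(B^*) = 0$ merely makes explicit a step the paper leaves implicit.
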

\begin{proof}
To derive the inequality, we remark that we only need to consider $x< B^*\sqrt{1-t}$. Indeed, for $x\geq B^*\sqrt{1-t}$ it holds that $V^*(t,x)\geq 0=f(t,x)$.

Consider $C^*\sqrt{1-t}<x<B^* \sqrt{1-t}$. Due to continuous fit and the maximality of $C^*$ on $(-\infty,B^*]$ as in Lemma \ref{lemma_maximality_c_star}, we derive that
\[V^*(t,x)=V_{C^*}(t,x)\geq V_{x/\sqrt{1-t}}(t,x)=f(t,x).\]
Finally,  for $x \leq C^*\sqrt{1-t}$, we have $V^*(t,x)= f(t,x)$.
\end{proof}



Before we verify the optimality, we shall prove the smoothness so as to use It\^o's formula.  In view of \eqref{def_candidate}, $V^*(t,x)$ is twice-differentiable in $x$ at any $(t,x)$ such that $x \neq C^* \sqrt{1-t}$.  Hence the smoothness on  $x = C^* \sqrt{1-t}$ is our only concern.

\begin{lemma}  \label{lemma_smooth_fit} We have smooth fit:
\begin{align}
\lim_{x \downarrow C^* \sqrt{1-t}}\frac \partial {\partial x} V^*(t,x) = \lim_{x \uparrow C^* \sqrt{1-t}} \frac \partial {\partial x} f(t,x), \quad 0 \leq t < 1.  \label{smooth_fit_condition}
\end{align}
\end{lemma}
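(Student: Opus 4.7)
The plan is to verify \eqref{smooth_fit_condition} by direct computation of the two one-sided derivatives, then to collapse them using the first-order condition $u(C^*)=0$ from Lemma \ref{lemma_maximality_c_star}.

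First I would observe that, since $C^*<0<B^*$, the boundary line $x=C^*\sqrt{1-t}$ sits strictly inside the region $\{x<B^*\sqrt{1-t}\}$ on which the first branch of \eqref{expression_U} applies. Thus, on a left-neighborhood of the boundary, $f(t,x)=U(t,x)-x$ takes the explicit smooth form $\sqrt{2\pi(1-t)}(1-(B^*)^2)e^{x^2/(2(1-t))}\Phi(x/\sqrt{1-t})-x$, while on a right-neighborhood $V^*$ coincides with the top branch of \eqref{def_candidate}. Both expressions are real-analytic in $x$ there, so the one-sided limits in \eqref{smooth_fit_condition} exist, and only their equality needs to be checked.

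Next, I would introduce $y=x/\sqrt{1-t}$ and exploit $\frac{d}{dy}\bigl[\Phi(\pm y)e^{y^2/2}\bigr]=\pm 1/\sqrt{2\pi}+y\,\Phi(\pm y)e^{y^2/2}$. A direct chain-rule calculation should produce
\[
\lim_{x\downarrow C^*\sqrt{1-t}}\frac{\partial V^*}{\partial x}(t,x)=v(C^*)\bigl[\sqrt{2\pi}\,C^*\Phi(-C^*)e^{(C^*)^2/2}-1\bigr]
\]
and
\[
\lim_{x\uparrow C^*\sqrt{1-t}}\frac{\partial f}{\partial x}(t,x)=(1-(B^*)^2)\sqrt{2\pi}\,C^*\Phi(C^*)e^{(C^*)^2/2}-(B^*)^2,
\]
both independent of $t$ as the self-similar structure of $V^*$ predicts.

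The crux is to show these two quantities coincide. Rewriting $u(C^*)=0$ as $1-(B^*)^2=(1-(C^*)^2)\Phi(-C^*)+C^*e^{-(C^*)^2/2}/\sqrt{2\pi}$ and substituting into the definition \eqref{v_single} of $v(C^*)$, together with $\Phi(C^*)+\Phi(-C^*)=1$, I anticipate obtaining the clean intermediate identity $v(C^*)=(B^*)^2-(C^*)^2$. Plugging this back into the above formula for the right-derivative of $V^*$, and using the rearrangement $v(C^*)\Phi(-C^*)e^{(C^*)^2/2}=(1-(B^*)^2)\Phi(C^*)e^{(C^*)^2/2}-C^*/\sqrt{2\pi}$ of \eqref{v_single}, the two limits should match term by term. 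The main obstacle is purely bookkeeping: juggling the three identities (the normal-tail relation, the definition \eqref{v_single} of $v$, and the first-order condition $u(C^*)=0$) without arithmetic slips. There is no new conceptual content, since smoothness of the two branches away from the boundary is transparent from \eqref{def_candidate} and \eqref{expression_U}, and the principle that the first-order optimality condition $v'(C^*)=0$ forces smooth fit is standard.
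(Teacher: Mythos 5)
Your proposal is correct and follows essentially the same route as the paper's proof: compute both one-sided derivatives explicitly, then invoke the first-order condition $u(C^*)=0$ from Lemma \ref{lemma_maximality_c_star} to collapse the difference. The intermediate identity $v(C^*)=(B^*)^2-(C^*)^2$ that you extract is a pleasant simplification which the paper does not isolate (it instead shows directly that the difference of one-sided derivatives equals $-u(C^*)/\Phi(-C^*)$), but both are the same elementary algebra on the same three ingredients, so this is a cosmetic rather than substantive difference.
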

\begin{proof} See Appendix \ref{appendix_proof}.
\end{proof}
Note that $V^*$ is continuously differentiable in $t$ for any $(t,x)$ such that  $x \neq C^* \sqrt{1-t}$; the differentiability on  $x = C^* \sqrt{1-t}$ in $t$ can be shown by slightly modifying the proof of Lemma \ref{lemma_smooth_fit}.

\begin{lemma} \label{generator_prob1} (i) For $(t,x)$ such that $x > C^* \sqrt{1-t}$, we have $\mathcal{L} V^* (t,x)  = 0$.
(ii) For $(t,x)$ such that $x < C^* \sqrt{1-t}$,  we have $\mathcal{L} V^* (t,x) \leq 0$.
\end{lemma}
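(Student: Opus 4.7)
The plan is to exploit the fact that everything in sight is built out of $F_1$ and $G_1$, which (together with the factor $\sqrt{1-t}$) give explicit solutions of $\mathcal{L}\xi = 0$ via the correspondence $\xi(t,x)=(1-t)^{q/2}\zeta(x/\sqrt{1-t})$, $\zeta''(y)-y\zeta'(y)-q\zeta(y)=0$ from \eqref{eq_ODE} with $q=1$.

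\textbf{Part (i).} On $\{x>C^{*}\sqrt{1-t}\}$ I would first rewrite the candidate value function in closed form. By the identity $G_{1}(y)=\sqrt{2\pi}\,e^{y^{2}/2}\Phi(-y)$ from \eqref{F_1_def}, the definition \eqref{def_candidate} gives
\[
V^{*}(t,x) \;=\; v(C^{*})\sqrt{1-t}\,G_{1}\!\left(\frac{x}{\sqrt{1-t}}\right).
\]
This is exactly of the form $(1-t)^{1/2}\zeta(x/\sqrt{1-t})$ with $\zeta = v(C^{*})G_{1}$, and $G_{1}$ is a solution of \eqref{eq_ODE} for $q=1$. The general fact recorded just below \eqref{pde} therefore yields $\mathcal{L}V^{*}(t,x)=0$ on the region in question.

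\textbf{Part (ii).} On $\{x<C^{*}\sqrt{1-t}\}$ we have $V^{*}(t,x)=f(t,x)=U(t,x)-x$, and by Lemma \ref{lemma_maximality_c_star} we know $C^{*}<0<B^{*}$, so this region lies inside $\{x<B^{*}\sqrt{1-t}\}$. I would then use the second line of \eqref{expression_U} together with \eqref{F_1_def} to rewrite
\[
U(t,x) \;=\; (1-(B^{*})^{2})\sqrt{1-t}\,F_{1}\!\left(\frac{x}{\sqrt{1-t}}\right),
\]
which again has the canonical form with $\zeta=(1-(B^{*})^{2})F_{1}$ solving \eqref{eq_ODE} for $q=1$. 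Hence $\mathcal{L}U(t,x)=0$ on the whole region $\{x<B^{*}\sqrt{1-t}\}$, and in particular on $\{x<C^{*}\sqrt{1-t}\}$. A direct computation from the definition of $\mathcal{L}$ in \eqref{pde} gives $\mathcal{L}x=-x/(1-t)$, so
\[
\mathcal{L}V^{*}(t,x) \;=\; \mathcal{L}U(t,x)-\mathcal{L}x \;=\; \frac{x}{1-t}.
\]

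\textbf{Conclusion.} Finally I would invoke $C^{*}<0$: if $x<C^{*}\sqrt{1-t}$, then $x<C^{*}\sqrt{1-t}<0$, so $x/(1-t)<0$, and the asserted inequality $\mathcal{L}V^{*}(t,x)\le 0$ follows. There is no real obstacle here; the only thing that could go wrong is a sign or a missing constant when matching $V^{*}$ and $U$ to multiples of $G_{1}$ and $F_{1}$, so the sole care needed is the verification of these two algebraic identities using \eqref{F_1_def}.
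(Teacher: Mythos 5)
Your proposal is correct and follows essentially the same route as the paper: part (ii) is identical (write $V^{*}=U-x$ on $\{x<C^{*}\sqrt{1-t}\}$, use $\mathcal{L}U=0$ there since $C^{*}<B^{*}$, compute $\mathcal{L}V^{*}=x/(1-t)$, and conclude from $C^{*}<0$), and for part (i) you simply make explicit, via the $G_{1}$ identity in \eqref{F_1_def}, the algebraic matching that the paper compresses into ``clear by the construction of the expected value as a solution to the ODE \eqref{eq_ODE}.''
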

\begin{proof}
(i) It is clear by the construction of the expected value as a solution to the ODE \eqref{eq_ODE}.

(ii)  As $V^*(t,x) = U(t,x) -x$ and because $\mathcal{L} U (t,x) = 0$ for $x <  C^* \sqrt{1-t} < B^* \sqrt {1-t}$ in view of  \eqref{expression_U},
\begin{align*}
\mathcal{L} V^* (t,x) =  \mathcal{L} U (t,x)  + \frac x {1-t}  =  \frac x {1-t},
\end{align*}
which is negative as $x < C^* \sqrt{1-t} < 0$.
\end{proof}

We now have the main result of this section.
\begin{theorem}  \label{theorem_problem1}The function
$V^*$ as defined in \eqref{def_candidate} is the value function.  Namely, $V(t, x) = V^*(t,x)$ for every $0 \leq t < 1$ and $x \in \R$; optimal stopping times are
\begin{align*}
\tau_1^* := \tau^-(C^*) \quad \textrm{and} \quad \tau_2^* := \inf\{ s \geq \tau^-(C^*): X_s\geq  B^* \sqrt{1-s}\}.
\end{align*}
\end{theorem}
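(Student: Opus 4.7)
The plan is a standard verification argument that exploits the strong Markov reduction already displayed in \eqref{V_def}: for any admissible pair $(\tau_1,\tau_2)$ the single-stopping result of Ekstr\"om and Wanntorp applied conditionally at time $\tau_1$ gives $\e_{t,x}[X_{\tau_2}-X_{\tau_1}]\le\e_{t,x}[f(\tau_1,X_{\tau_1})]$, with equality attained when $\tau_2$ is the first upcrossing of $B^*\sqrt{1-\cdot}$ after $\tau_1$. Hence it suffices to prove that $V^*$ coincides with the single-stopping value $\sup_{t\le\tau<1}\e_{t,x}[f(\tau,X_\tau)]$ and to identify $\tau_1^*=\tau^-(C^*)$ as an optimizer.

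For the lower bound $V(t,x)\ge V^*(t,x)$, I would simply plug in the candidate pair $(\tau_1^*,\tau_2^*)$: the construction of $V_{C^*}$ in \eqref{V_C_def}, combined with the single-stopping theorem applied at $\tau_1^*$, yields $\e_{t,x}[X_{\tau_2^*}-X_{\tau_1^*}]=V_{C^*}(t,x)=V^*(t,x)$; a.s.\ finiteness of $\tau_1^*$ on $[t,1)$ follows from the unboundedness of the time-changed process $X_s/\sqrt{1-s}$ as $s\uparrow1$ under $\p_{t,x}$. For the upper bound the task is to show that $\{V^*(s,X_s)\}_{s\in[t,1)}$ is a supermartingale dominating $\{f(s,X_s)\}$. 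The $C^1$ smooth fit from Lemma \ref{lemma_smooth_fit}, together with the $C^1$-in-$t$ regularity noted immediately afterwards, makes $V^*$ globally $C^{1,1}$ with a bounded, piecewise continuous second $x$-derivative, which is precisely the setup for the Peskir change-of-variable formula with local time on the curve $x=C^*\sqrt{1-s}$: applied on $[t,\tau\wedge s]$ this yields
\begin{equation*}
V^*(\tau\wedge s,X_{\tau\wedge s})=V^*(t,x)+\int_t^{\tau\wedge s}\mathcal{L}V^*(u,X_u)\diff u+M_{\tau\wedge s}-M_t,
\end{equation*}
for a local martingale $M$, with the boundary local time term vanishing by smooth fit. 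Lemma \ref{generator_prob1} makes the drift non-positive and Lemma \ref{lemma_domination_prob1} supplies $V^*\ge f$, so after a routine localization one obtains $V^*(t,x)\ge\e_{t,x}[f(\tau\wedge s,X_{\tau\wedge s})]$ for every stopping time $\tau<1$.

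The principal technical obstacle will be the passage to the limit $s\uparrow1$. I expect to handle this through a uniform-integrability estimate for $\{V^*(s,X_s)\}_{s\in[t,1)}$: from \eqref{def_candidate} and the asymptotic $\Phi(-y)e^{y^2/2}=O(1/|y|)$ as $|y|\to\infty$ one reads off that $V^*$ grows at most linearly in $|x|$ uniformly in $t$, while the law of $X_s$ under $\p_{t,x}$ is Gaussian with variance $(s-t)(1-s)/(1-t)\le 1$, giving a uniform $L^2$ bound on $V^*(s,X_s)$. Letting $s\uparrow1$ in the supermartingale inequality and then taking the supremum over admissible $\tau$ yields $V^*(t,x)\ge V(t,x)$, and combining with the lower bound completes the identification $V=V^*$ together with optimality of $(\tau_1^*,\tau_2^*)$.
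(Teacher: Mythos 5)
Your proposal is correct and follows the same verification architecture as the paper (smooth fit so It\^o/Peskir applies with no local-time contribution, $\mathcal{L}V^*\le 0$, $V^*\ge f$, localize, pass to the limit, match with the payoff of the candidate pair). The genuine difference is the choice of localizing sequence and the associated convergence argument. The paper localizes spatially by $\tau(M)=\inf\{s\ge t:|X_s|\ge M\sqrt{1-s}\}$ with $M>B^*$, decomposes $\e_{t,x}[f(\nu\wedge\tau(M),X_{\nu\wedge\tau(M)})]$ into the piece where $\nu<\tau(M)$ (handled by monotone convergence, since $f\ge 0$) and the piece where $\tau(M)\le\nu$ (handled by a uniform-integrability estimate coming from the bound $|f(s,y)|\le\sqrt{1-s}\,B^*+|y|$ on the continuation region and \eqref{moment_x_q}). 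You localize in time via $s\uparrow 1$ and argue uniform integrability of $V^*$ directly from the linear growth of $V^*$ in $|x|$ together with the Gaussian law of $X_s$. Both routes work; the paper's has the advantage that on $\{|X_s|\le M\sqrt{1-s}\}$ the integrand of the stochastic integral is manifestly bounded, whereas with time-localization you rely on the (true, but worth checking) fact that $\partial_x V^*$ is globally bounded to ensure the local martingale is a martingale on $[t,s]$.

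One small imprecision you should repair: the quantity you need to control is $f(\tau\wedge s,X_{\tau\wedge s})$ (or $V^*(\tau\wedge s,X_{\tau\wedge s})$), indexed by $s$ at a fixed stopping time $\tau$, not $V^*(s,X_s)$. The $L^2$ estimate you sketch is stated for the deterministic-time marginal. The clean fix is to note $0\le f(\tau\wedge s,X_{\tau\wedge s})\le V^*(\tau\wedge s,X_{\tau\wedge s})\le C\bigl(1+\sup_{u\in[t,1)}|X_u|\bigr)$, and the running maximum of the Brownian bridge is in $L^1$ (indeed has Gaussian tails), so dominated convergence applies as $s\uparrow 1$. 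Also be aware that the asymptotic $\Phi(-y)e^{y^2/2}=O(1/|y|)$ is only valid as $y\to+\infty$; the linear growth of $V^*$ as $x\to-\infty$ comes instead from $V^*=f=U-x$ there, with $U(t,x)\to 0$, so $f\sim|x|$. With those two adjustments the argument is complete and matches the paper's conclusion.
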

\begin{proof}
Thanks to the smooth fit as in Lemma \ref{lemma_smooth_fit}, It\^o's formula applies and, for all $(s,X_s)$ such that $ X_s \neq C^* \sqrt{1-s}$,
\begin{align}
\diff V^*(s, X_s) &= \mathcal{L} V^*(s, X_s) \diff s + \frac \partial {\partial x} V^*(s, X_s) \diff W_s \leq \frac \partial {\partial x} V^*(s, X_s) \diff W_s, \label{V_diff__eqn}
\end{align}
where the inequality holds by Lemma  \ref{generator_prob1}.


In the problem \eqref{V_def}, because stopping at or above $B^* \sqrt{1-t}$ attains a zero payoff, which is clearly suboptimal, we can focus on stopping times $\nu$ such that $X_\nu < B^* \sqrt{1-\nu}$ (and hence $f(\nu, X_{\nu}) > 0$) a.s. 
For any such $[t,1)$-valued stopping time $\nu$, with $\tau(M)$ as defined in \eqref{def_tau_D} for $M > B^*$, we have
\begin{align*}
\e_{t,x} [f(\nu \wedge \tau(M), X_{\nu \wedge \tau(M)})]  \leq \e_{t,x} [V^*(\nu \wedge \tau(M), X_{\nu \wedge \tau(M)}) ] \leq  V^*(t, x),
\end{align*}
where the first and second inequalities hold by Lemma \ref{lemma_domination_prob1} and \eqref{V_diff__eqn}, respectively.

In order to take $M \rightarrow \infty$, we decompose the left-hand side as
\begin{align*}
\e_{t,x} [f(\nu \wedge \tau(M), X_{\nu \wedge \tau(M)})]  = \e_{t,x} [f(\nu, X_{\nu}) 1_{\{\nu < \tau(M)\}}]   + \e_{t,x} [f(\tau(M), X_{\tau(M)}) 1_{\{\nu \geq \tau(M)\}}].
\end{align*}
The first expectation of the right-hand side converges via monotone convergence to $\e_{t,x} [f(\nu, X_{\nu})]$ because $f$ is nonnegative. On the other hand,  $f(\tau(M), X_{\tau(M)}) 1_{\{\nu \geq \tau(M)\}}$ is uniformly integrable for $\{ \tau(M), M > B^*\}$.  Indeed, $f(s,y) = 0$ for $y \geq B^* \sqrt{1-s}$.  In addition, for $y < B^* \sqrt{1-s}$, the first equation of \eqref{expression_U} gives that $U(s,y) \leq \sqrt{1-s} B^*$, and hence we have a bound
\begin{align*}
|f(s,y) |= U(s,y) - y \leq \sqrt{1-s} B^* + |y|, \quad y < B^* \sqrt{1-s}.
\end{align*}
Therefore,
\begin{align*}
|f(\tau(M), X_{\tau(M)})| 1_{\{\nu \geq \tau(M)\}}
\leq \sqrt{1-\tau(M)} B^* + |X_{\tau(M)}|,
\end{align*}
which is uniformly integrable in view of \eqref{moment_x_q} (which is maximized by setting $D = D^*$).  Now as $M \rightarrow \infty$, because $\tau(M) \rightarrow 1$ and $X_{\tau(M)} \rightarrow 0$ a.s., we have 
\begin{align*}
\e_{t,x} [f(\tau(M), X_{\tau(M)}) 1_{\{\nu \geq \tau(M)\}}] \rightarrow 0.
\end{align*}
 In sum, we have  $\e_{t,x} [f(\nu, X_{\nu})]  \leq V^*(t,x)$.

This together with the fact $V^*$ is attained by an admissible stopping time $\tau^-(C^*) \in [t,1)$ shows the result.
\end{proof}

\section{Problem 2}  \label{section_problem2}
We now consider the problem, for given  integer $n \geq 0$,
\begin{multline*}
J(t,x) := \sup_{t \leq \tau_1 \leq \tau_2 <1}\e_{t,x}[(X_{\tau_2}^{2n+1} -  X_{\tau_1}^{2n+1}) 1_{\{X_{\tau_1} \leq 0\}} + (X_{\tau_1}^{2n+1} -  X_{\tau_2}^{2n+1}) 1_{\{X_{\tau_1} > 0\}}],  \\  0 \leq t<  1, \; x \in \R.
\end{multline*}
By the strong Markov property, we can  rewrite it as
\begin{align*}
J(t,x) 
&= \sup_{t \leq \tau < 1}\e_{t,x}[g(\tau, X_{\tau})],
\end{align*}
where
\begin{align*}
g(t,x) := (U(t, x) - x^{2n+1}) 1_{\{x \leq 0 \}} + (U(t, -x) +  x^{2n+1}) 1_{\{x > 0 \}}, \quad 0 \leq t < 1, \; x \in \R.
\end{align*}

By the symmetry of $g$ with respect to $x$, we expect, for some $D \geq 0$, that the first stopping time has a form $\tau(D)$ defined as in \eqref{def_tau_D};
 the second stopping time becomes
\begin{align*}
\left\{ \begin{array}{ll} \inf\{s\geq \tau(D): X_s\geq B^*\sqrt{1-s} \}, & \textrm{if } X_{\tau(D)} \leq -D \sqrt {1-\tau(D)}, \\
\inf\{s\geq \tau(D): X_s\leq -B^* \sqrt{1-s}\}, & \textrm{if } X_{\tau(D)} \geq D \sqrt {1-\tau(D)}.
\end{array} \right.   
\end{align*}
Define the corresponding payoff by
\begin{align*}
J_D(t,x)
&:= \e_{t,x}[g(\tau(D), X_{\tau(D)})], \quad 0 \leq t < 1, \; x \in \R.
\end{align*}

We first rewrite it as a function of $F_{2n+1}$ and $G_{2n+1}$ as defined in \eqref{def_F_G}.
\begin{lemma} \label{lemma_J_D}Given $D \geq 0$, we have, for all $0 \leq t < 1$ and $- D \sqrt{1-t} \leq x \leq D \sqrt{1-t}$,
\begin{align}
J_D(t,x)
&= (1-t)^{n+1 /2} {(F_{2n+1}+G_{2n+1})\big(x  /{\sqrt{1-t}} \big)}j(D),  \label{J_D_expression}
\end{align}
where
\begin{align*}
j(D) := \frac 1 {(F_{2n+1}+G_{2n+1})(D)}\left[ D^{2n+1} + (B^*)^{2n+1}\frac {G_{2n+1} (D)} {F_{2n+1}(B^*)} \right].
\end{align*}
\end{lemma}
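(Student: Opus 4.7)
\medskip

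The plan is to evaluate the payoff $g(\tau(D), X_{\tau(D)})$ pointwise on the two pieces of the stopping boundary and show that, thanks to the $\pm x$ symmetry built into $g$, the payoff admits a single closed-form expression of the form $(\text{constant depending on } D)\times(1-\tau(D))^{n+1/2}$. Once that is established, pulling the constant outside the expectation reduces the problem to computing $\e_{t,x}[(1-\tau(D))^{(2n+1)/2}]$, which is exactly \eqref{moment_time_q} specialised to $q=2n+1$.

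\medskip

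First I would split on the event $\{X_{\tau(D)}=-D\sqrt{1-\tau(D)}\}$ versus $\{X_{\tau(D)}=D\sqrt{1-\tau(D)}\}$. On the former (negative) piece, $g(\tau(D),X_{\tau(D)})=U(\tau(D),-D\sqrt{1-\tau(D)})-(-D\sqrt{1-\tau(D)})^{2n+1}$; plugging in the first branch of \eqref{expression_U_general} (valid since $-D<B^*$) and noting $F_{2n+1}(-D)=G_{2n+1}(D)$ and $(-D)^{2n+1}=-D^{2n+1}$, this becomes
\begin{align*}
(1-\tau(D))^{n+1/2}\Bigl[(B^*)^{2n+1}\tfrac{G_{2n+1}(D)}{F_{2n+1}(B^*)} + D^{2n+1}\Bigr].
\end{align*}
On the latter (positive) piece, $g(\tau(D),X_{\tau(D)})=U(\tau(D),-D\sqrt{1-\tau(D)})+(D\sqrt{1-\tau(D)})^{2n+1}$; the $U$-term is identical to the one above, and the extra $(D\sqrt{1-\tau(D)})^{2n+1}=D^{2n+1}(1-\tau(D))^{n+1/2}$ matches the negative-side contribution. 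Hence, almost surely,
\begin{align*}
g(\tau(D),X_{\tau(D)}) = (1-\tau(D))^{n+1/2}\Bigl[D^{2n+1} + (B^*)^{2n+1}\tfrac{G_{2n+1}(D)}{F_{2n+1}(B^*)}\Bigr],
\end{align*}
which is $(1-\tau(D))^{n+1/2}$ times the deterministic bracket appearing in the definition of $j(D)$ (up to the normalising factor $(F_{2n+1}+G_{2n+1})(D)$).

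\medskip

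Taking $\e_{t,x}$ on both sides and pulling the deterministic bracket out, it remains to evaluate $\e_{t,x}[(1-\tau(D))^{n+1/2}]$ for $-D\sqrt{1-t}\leq x\leq D\sqrt{1-t}$; this is precisely \eqref{moment_time_q} with $q=2n+1$, giving
\begin{align*}
\e_{t,x}[(1-\tau(D))^{n+1/2}] = (1-t)^{n+1/2}\frac{(F_{2n+1}+G_{2n+1})(x/\sqrt{1-t})}{(F_{2n+1}+G_{2n+1})(D)}.
\end{align*}
Multiplying by the bracket and identifying $j(D)$ yields \eqref{J_D_expression}.

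\medskip

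I expect no serious obstacle here: the argument is essentially a bookkeeping exercise in which the evenness of $F_{2n+1}+G_{2n+1}$ and the identity $F_{2n+1}(-D)=G_{2n+1}(D)$ do the work. The only point that must be checked with care is that the two boundary cases collapse to the \emph{same} expression, which is precisely what makes the symmetric cut-off $g$ the right object to study and what forces the formula for $j(D)$ to be symmetric in $D$ via $(F_{2n+1}+G_{2n+1})(D)$.
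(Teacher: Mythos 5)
Your proposal is correct and follows the same line as the paper's proof: evaluate $g$ at the two boundary pieces, observe that both yield $U(\tau(D),-D\sqrt{1-\tau(D)})+D^{2n+1}(1-\tau(D))^{n+1/2}$, substitute the closed form of $U$ from \eqref{expression_U_general}, and finish by invoking \eqref{moment_time_q} with $q=2n+1$. The only cosmetic difference is that you substitute \eqref{expression_U_general} before taking the expectation and use the identity $F_{2n+1}(-D)=G_{2n+1}(D)$ at that stage, whereas the paper keeps $U$ symbolic one step longer; the content is identical.
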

\begin{proof}
Under the initial condition $- D \sqrt{1-t} \leq x \leq D \sqrt{1-t}$, we have $\p_{t,x}$-a.s.
\begin{align*}
&g(\tau(D),X_{\tau(D)}) \\ &= (U(\tau(D),X_{\tau(D)}) - X_{\tau(D)}^{2n+1}) 1_{\{ X_{\tau(D)} \leq 0 \}} + (U(\tau(D),-X_{\tau(D)}) +  X_{\tau(D)}^{2n+1}) 1_{\{X_{\tau(D)} > 0 \}} \\
&= (U(\tau(D),-D \sqrt{1-\tau(D)}) + D^{2n+1} (1-\tau(D))^{n+1 / 2}) 1_{\{ X_{\tau(D)} \leq 0 \}} \\ &\qquad + (U(\tau(D),-D \sqrt{1-\tau(D)}) + D^{2n+1} (1-\tau(D))^{n+1/ 2}) 1_{\{X_{\tau(D)} > 0 \}} \\
&= U(\tau(D),-D \sqrt{1-\tau(D)}) + D^{2n+1} (1-\tau(D))^{n+1 /2}.
\end{align*}
Hence, we can write, by \eqref{expression_U_general},
\begin{align*}
J_D(t,x)
&= \e_{t,x}[ U(\tau(D),  -D \sqrt {1-\tau(D)}) + D^{2n+1} (1-\tau(D))^{n+1 /2}]  \\
&= \e_{t,x}[(  1-\tau(D))^{n+1 /2} ] \left[D^{2n+1} + (B^*)^{2n+1}\frac {F_{2n+1} (-D)} {F_{2n+1}(B^*)} \right].
\end{align*}
The proof is now complete by \eqref{moment_time_q}.
%
%
%
\end{proof}

\begin{figure}[htbp]
\begin{center}
 \includegraphics[scale=0.65]{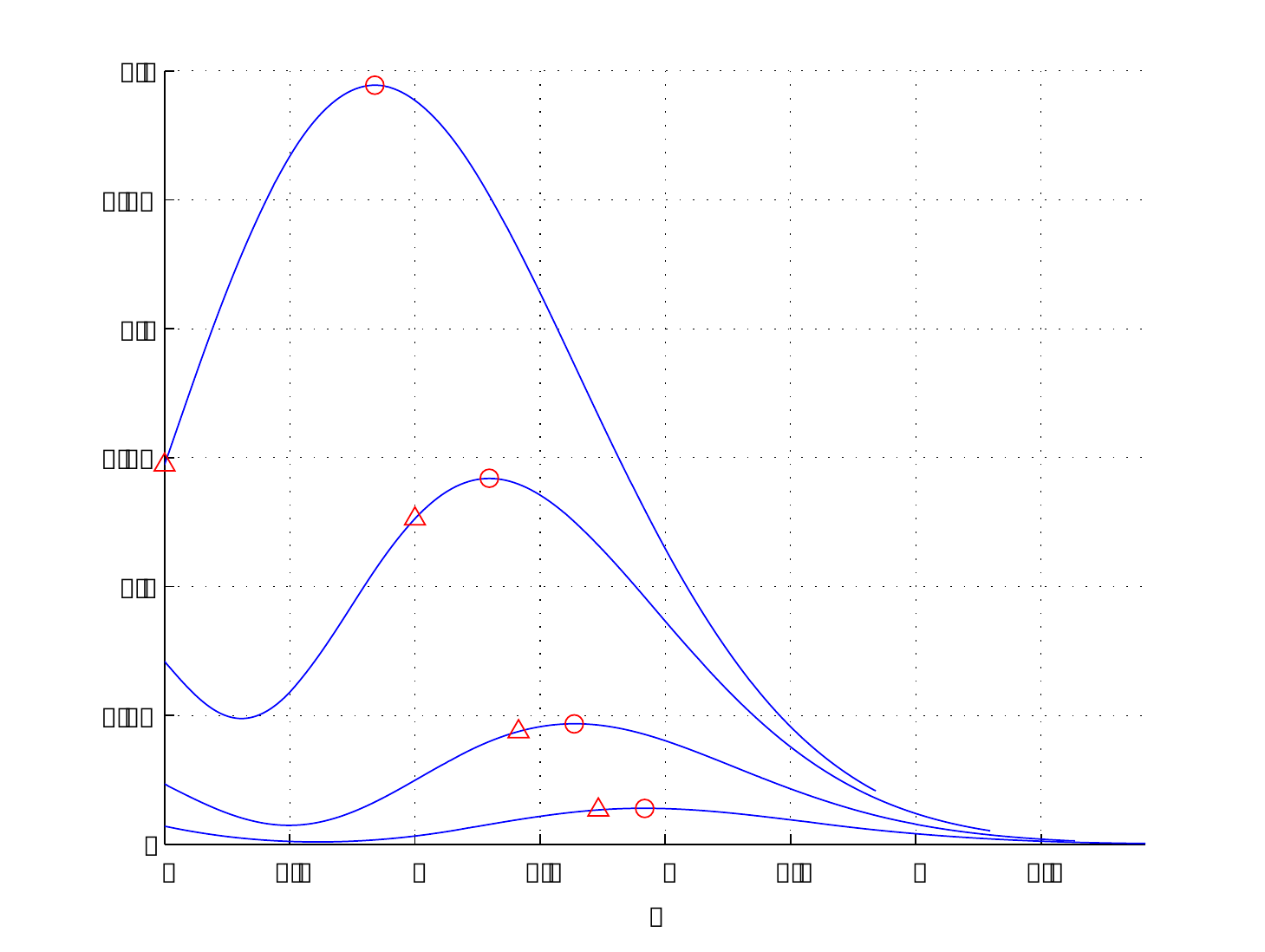}
\caption{Plots of $j$ for $n=0,1,2,3$. Circles indicate the points at $B^*$ and triangles indicate the points at $\sqrt{n}$.} \label{plot_G}
\end{center}
\end{figure}

In view of \eqref{J_D_expression}, we want to maximize the function $j$.  It turns out that it is maximized by $B^*$ as in \eqref{B_star}.  See Figure \ref{plot_G} for a numerical plot of this function.

\begin{lemma}\label{lemma_maximizer_problem2}
For any $n \geq 0$, $B^*$ maximizes $j$.
\end{lemma}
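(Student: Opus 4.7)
The plan is to reduce the maximization of $j$ to the single-stopping maximality result already established for the function $D \mapsto D^{2n+1}/F_{2n+1}(D)$ in \eqref{monotonicity_B}. The crucial observation is that one can introduce the constant
\[
\alpha := \frac{(B^*)^{2n+1}}{F_{2n+1}(B^*)},
\]
which is exactly the maximum value of $D \mapsto D^{2n+1}/F_{2n+1}(D)$, and rewrite
\[
j(D) = \frac{D^{2n+1} + \alpha\, G_{2n+1}(D)}{F_{2n+1}(D)+G_{2n+1}(D)}.
\]

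The first step is to evaluate $j$ at $B^*$. Substituting and using the definition of $\alpha$, the numerator collapses to $\alpha\bigl(F_{2n+1}(B^*)+G_{2n+1}(B^*)\bigr)$, so that $j(B^*) = \alpha$. Thus the claim $j(D) \leq j(B^*)$ for all $D \geq 0$ is equivalent to
\[
D^{2n+1} + \alpha\, G_{2n+1}(D) \leq \alpha\,\bigl(F_{2n+1}(D)+G_{2n+1}(D)\bigr),
\]
that is, $D^{2n+1}/F_{2n+1}(D) \leq \alpha$ (using $F_{2n+1}(D)>0$ for all $D$, which is immediate from \eqref{def_F_G}). But this is precisely the maximality asserted by \eqref{monotonicity_B}, so the inequality is automatic.

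The main obstacle, if any, is really only notational: one has to spot the algebraic identity that makes the $G_{2n+1}(D)$ term on each side cancel, leaving behind the one-dimensional optimization already solved in Section \ref{section_preliminaries}. Once that is in place, no differentiation of $j$ is needed and the lemma follows in a few lines. For completeness one may also record that $j(B^*) = \alpha = (B^*)^{2n+1}/F_{2n+1}(B^*)$, a formula that will be useful later when assembling the value function for Problem 2.
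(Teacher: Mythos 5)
Your proof is correct and is essentially the paper's argument: both reduce $j(D)\leq j(B^*)$ to the known maximality of $D\mapsto D^{2n+1}/F_{2n+1}(D)$ at $B^*$ from \eqref{monotonicity_B}, and both exploit the identity $j(B^*)=(B^*)^{2n+1}/F_{2n+1}(B^*)$. The paper phrases the reduction as a factorization of the difference $j(D)-j(B^*)$, while you clear denominators directly, but the content and key inequality are the same.
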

\begin{proof}
For any $D \geq 0$,
\begin{align*}
j(D)
&= j(B^*) +\frac {D^{2n+1}} {(F_{2n+1}+G_{2n+1})(D)}  - \frac {(B^*)^{2n+1}} {(F_{2n+1}+G_{2n+1})(D)}\frac {F_{2n+1} (D)} {F_{2n+1}(B^*)} 
\\
&= j(B^*)
+\frac {D^{2n+1}} {(F_{2n+1}+G_{2n+1})(D)} \Big[ 1  -\frac {(B^*)^{2n+1}} {D^{2n+1}} \frac {F_{2n+1} (D)} {F_{2n+1}(B^*)}  \Big].
\end{align*}
Because  $B^*$ is the maximizer of $B \mapsto {B^{2n+1}} / {F_{2n+1}(B)}$ (see \eqref{monotonicity_B}) and $F_{2n+1}$ is nonnegative, we have
\begin{align*}
1  -\frac {(B^*)^{2n+1}} {D^{2n+1}} \frac {F_{2n+1} (D)} {F_{2n+1}(B^*)}   \leq 0,
\end{align*}
which shows $j(D) \leq j(B^*)$, as desired.
\end{proof}


\begin{remark} \label{remark_j} We have
 $j(B^*) = {(B^*)^{2n+1}}/{F_{2n+1}(B^*)}$.

\end{remark}

Now setting $D = B^*$, we have our candidate value function
\begin{align}
J^*(t,x) := J_{B^*}(t,x)
&= \left\{ \begin{array}{ll} (1-t)^{n+1 /2} {(F_{2n+1}+G_{2n+1})\big(x / {\sqrt{1-t}}\big)}j(B^*), & |x| < B^* \sqrt{1-t}, \\ g(t,x), & |x| \geq B^* \sqrt{1-t}.  \end{array}\right. \label{def_candidate2}
\end{align}
Figure \ref{plot_prob2} plots $J^*$ and $g$ for $t=0$ as a function of $x$; this suggests Lemmas \ref{lemma_domination_prob2} and \ref{smoothness_problem2}, which we shall prove analytically below. 

\begin{figure}[htbp]
\begin{minipage}{1.0\textwidth}
\centering
\begin{tabular}{cc}
 \includegraphics[scale=0.56]{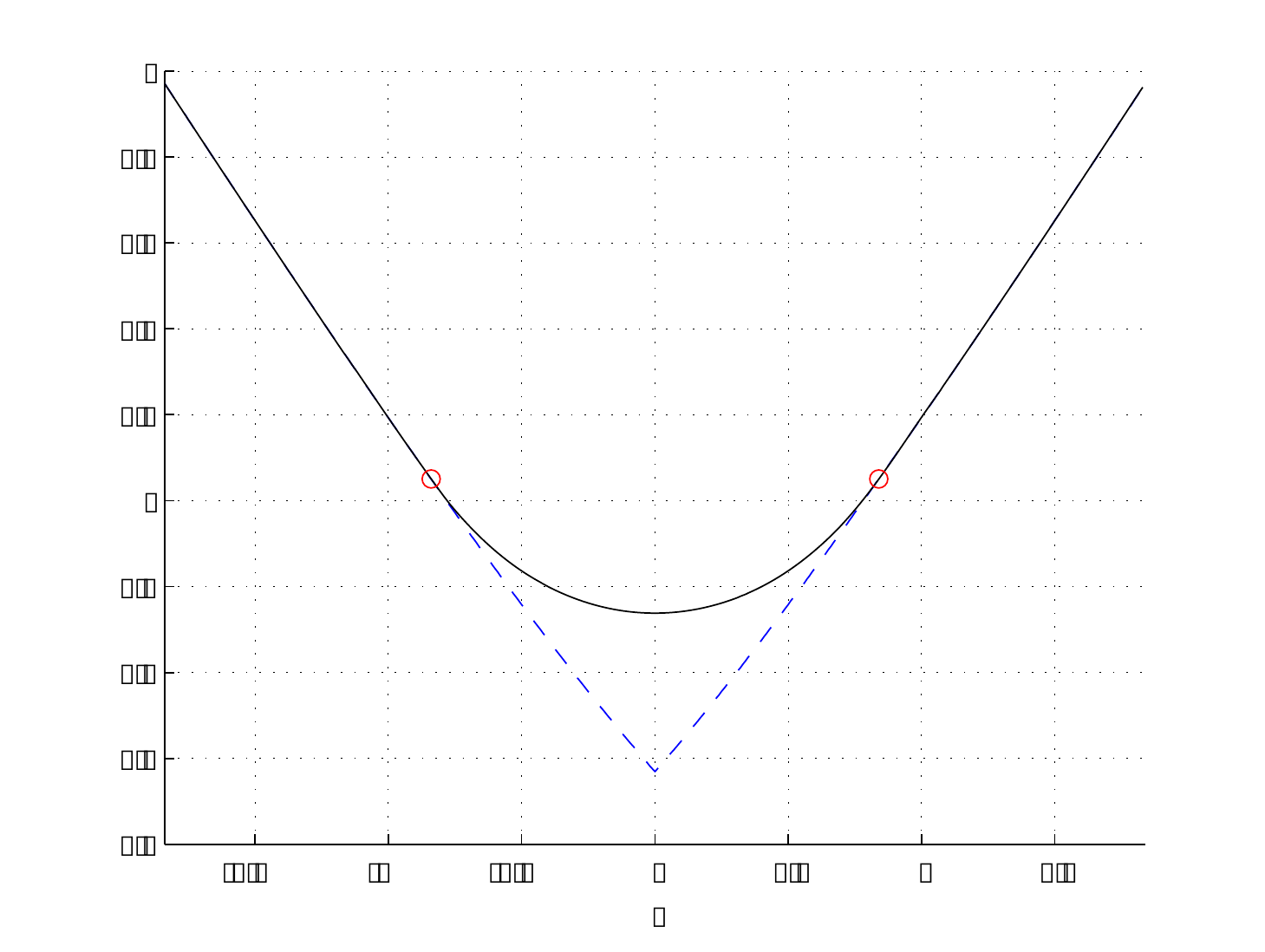} & \includegraphics[scale=0.56]{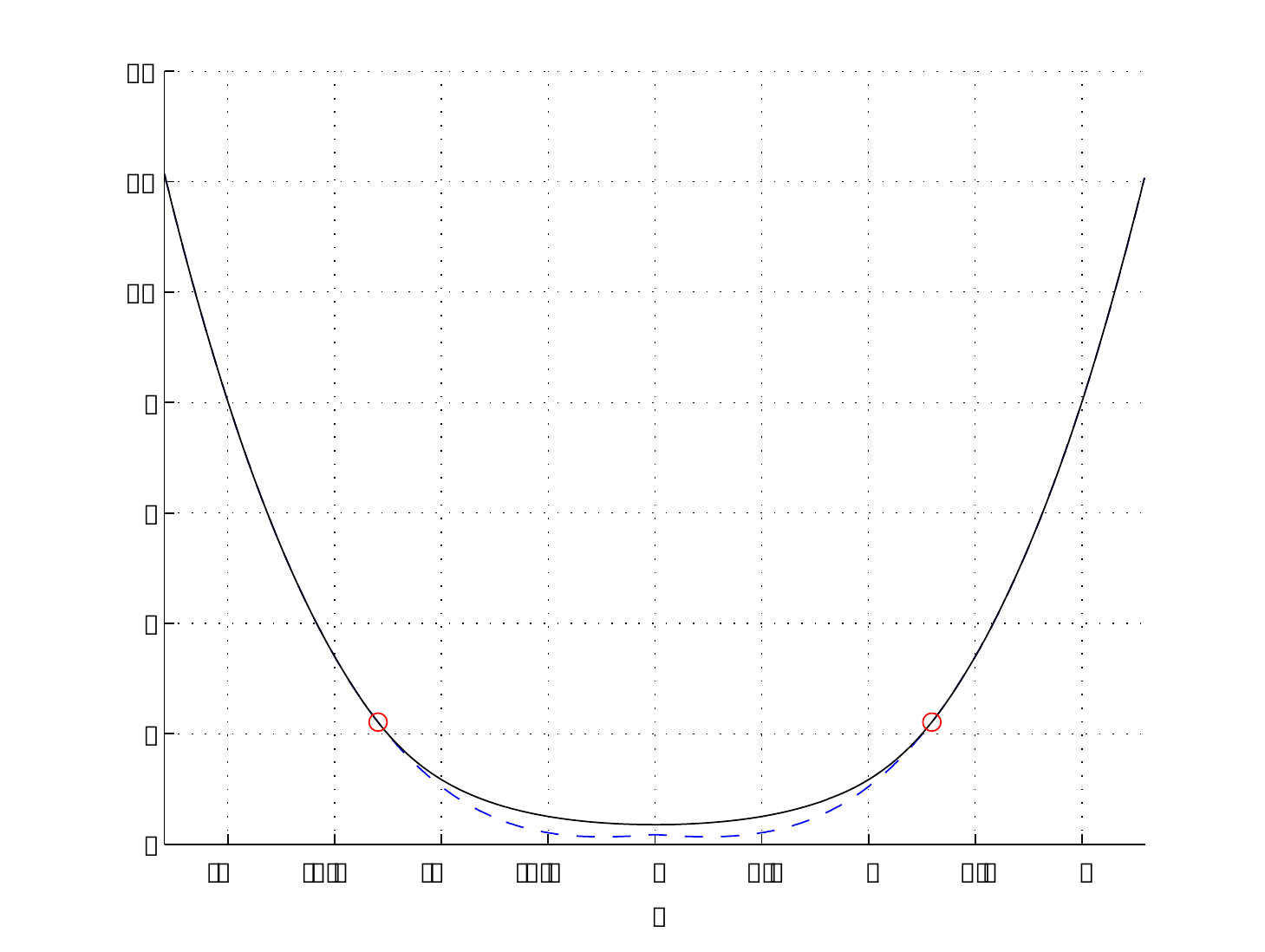}
\end{tabular}
\end{minipage}\caption{Plots of $J^*(0,\cdot)$ (solid) and $g(0,\cdot)$ (dotted) for $n=0$ (left) and $n=1$ (right). Circles indicate the points at $B^*$ and $-B^*$.} \label{plot_prob2}
\end{figure}

\begin{lemma} \label{lemma_domination_prob2}
We have $J^*(t,x)\geq g(t,x)$, for any $0 \leq t < 1$ and $x \in \R$.
\end{lemma}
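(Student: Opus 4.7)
The plan is to split the argument according to whether $|x| \geq B^* \sqrt{1-t}$ or $|x| < B^* \sqrt{1-t}$. In the former regime, the inequality holds as equality by the very definition of the candidate value function in \eqref{def_candidate2}, so there is nothing to prove. The substance lies entirely in the interior case $|x| < B^* \sqrt{1-t}$.

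For the interior, I will exploit the maximality of $j$ at $B^*$ established in Lemma \ref{lemma_maximizer_problem2} by competing $J^*(t,x) = J_{B^*}(t,x)$ against $J_D(t,x)$ with the degenerate choice $D := |x|/\sqrt{1-t}$. With this $D$, the process $X$ already sits at the boundary $|y| = D\sqrt{1-s}$ at time $s = t$, so $\tau(D) = t$ holds $\p_{t,x}$-a.s.\ and hence $J_D(t,x) = g(t,x)$ trivially. Since $(F_{2n+1}+G_{2n+1})\bigl(x/\sqrt{1-t}\bigr) > 0$, multiplying the inequality $j(B^*) \geq j(D)$ through by this common positive factor and applying Lemma \ref{lemma_J_D} yields
\[
J^*(t,x) = (1-t)^{n+1/2}(F_{2n+1}+G_{2n+1})\bigl(x/\sqrt{1-t}\bigr)\, j(B^*) \geq J_D(t,x) = g(t,x),
\]
as desired.

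The main (and only) obstacle, modest though it is, is the cross-check that the closed-form expression of Lemma \ref{lemma_J_D}, when evaluated at the boundary $x = \pm D\sqrt{1-t}$, does indeed reproduce $g(t,x)$, since the formula there was stated for the strict interior. This is a bookkeeping exercise: for $x = D\sqrt{1-t} > 0$ one matches the $D^{2n+1}(1-t)^{n+1/2}$ summand in $j(D)(F_{2n+1}+G_{2n+1})(D)(1-t)^{n+1/2}$ with $x^{2n+1}$ and identifies the remaining summand with $U(t,-x)$ via \eqref{expression_U_general}, using the symmetry $F_{2n+1}(-D) = G_{2n+1}(D)$; the case $x = -D\sqrt{1-t}$ follows by the same calculation with the roles of $F_{2n+1}$ and $G_{2n+1}$ exchanged.
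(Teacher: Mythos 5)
Your proof is correct and takes essentially the same route as the paper's: both compare $J_{B^*}(t,x)$ against $J_D(t,x)$ with $D = |x|/\sqrt{1-t}$ using the maximality of $j$ at $B^*$ (Lemma \ref{lemma_maximizer_problem2}), and both identify $J_D(t,x) = g(t,x)$ at this degenerate choice (the paper calls it ``continuous fit''; you justify it directly via $\tau(D) = t$ a.s.). The only cosmetic difference is that you treat both signs of $x$ simultaneously through $|x|$, whereas the paper handles negative $x$ by a separate appeal to symmetry.
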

\begin{proof}
Suppose $0 < x<B^* \sqrt{1-t}$.  Due to continuous fit and the maximality of $B^*$ on $[0,\infty)$ we derive that
\[J^*(t,x)=J_{B^*}(t,x)\geq J_{x/\sqrt{1-t}}(t,x)= g(t,x).\]
Suppose $0 \geq  x > -B^* \sqrt{1-t}$.   By the symmetry of $J^*$ and $g$ with respect to $x$,
\[J^*(t,x)=J^*(t,|x|) \geq g(t, |x|) = g(t,x). \]
Finally,  for $|x| \geq B^*\sqrt{1-t}$, we have $J^*(t,x)= g(t,x)$.
\end{proof}

 In view of \eqref{def_candidate2}, $J^*(t,x)$ is twice-differentiable in $x$ at any $(t,x)$ such that $|x| \neq B^* \sqrt{1-t}$.  As we shall show next, on  $|x| = B^* \sqrt{1-t}$, differentiability holds (the differentiability with respect to $t$ holds similarly).

\begin{lemma} \label{smoothness_problem2}
We have smooth fit: for all $0 \leq t < 1$,
\begin{align*}
 \lim_{x \uparrow B^* \sqrt{1-t}}\frac \partial {\partial x} J^*(t,x) &= \lim_{x \downarrow B^* \sqrt{1-t}} \frac \partial {\partial x} g(t,x), \\
\lim_{x \downarrow -B^* \sqrt{1-t}}\frac \partial {\partial x} J^*(t,x) &= \lim_{x \uparrow -B^* \sqrt{1-t}} \frac \partial {\partial x} g(t,x).
\end{align*}
\end{lemma}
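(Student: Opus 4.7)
The plan is to compute one-sided partial derivatives on each side of each of the two boundary curves $x=\pm B^{*}\sqrt{1-t}$ and check that they agree, using only the ODE/PDE ansatz, the identity $G_{2n+1}'(y)=-F_{2n+1}'(-y)$ coming from $G_{q}(y)=F_{q}(-y)$, and the defining relation \eqref{B_star} for $B^{*}$.

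First, I will handle the upper boundary $x=B^{*}\sqrt{1-t}$. On the interior side $0<x<B^{*}\sqrt{1-t}$, direct differentiation in the formula \eqref{def_candidate2} yields
\[
\frac{\partial}{\partial x}J^{*}(t,x)=(1-t)^{n}\,(F_{2n+1}'+G_{2n+1}')\!\bigl(x/\sqrt{1-t}\bigr)\,j(B^{*}),
\]
so the left limit at $B^{*}\sqrt{1-t}$ equals $(1-t)^{n}(F_{2n+1}'+G_{2n+1}')(B^{*})\,j(B^{*})$, with $j(B^{*})=(B^{*})^{2n+1}/F_{2n+1}(B^{*})$ by Remark \ref{remark_j}. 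On the exterior side $x>B^{*}\sqrt{1-t}>0$, we have $g(t,x)=U(t,-x)+x^{2n+1}$, and since $-x<-B^{*}\sqrt{1-t}<B^{*}\sqrt{1-t}$, formula \eqref{expression_U_general} applies to $U(t,-x)$. Differentiating and using $G_{2n+1}'(B^{*})=-F_{2n+1}'(-B^{*})$, the right limit becomes
\[
\frac{(1-t)^{n}(B^{*})^{2n+1}}{F_{2n+1}(B^{*})}\,G_{2n+1}'(B^{*})+(2n+1)(B^{*})^{2n}(1-t)^{n}.
\]
Applying \eqref{B_star} to rewrite $(2n+1)(B^{*})^{2n}=(B^{*})^{2n+1}F_{2n+1}'(B^{*})/F_{2n+1}(B^{*})$, this collapses to $(1-t)^{n}(F_{2n+1}'+G_{2n+1}')(B^{*})\,j(B^{*})$, matching the interior limit.

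The lower boundary $x=-B^{*}\sqrt{1-t}$ is handled by symmetry: both $J^{*}(t,\cdot)$ and $g(t,\cdot)$ are even functions of $x$ (the evenness of $J^{*}$ is visible from \eqref{def_candidate2} since $F_{2n+1}+G_{2n+1}$ is even, and the evenness of $g$ follows directly from its piecewise definition), so their one-sided partial derivatives in $x$ at $-B^{*}\sqrt{1-t}$ are the negatives of those at $+B^{*}\sqrt{1-t}$, and equality transfers from one side to the other.

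The routine calculation is the algebraic collapse in the exterior-side derivative; the only substantive ingredient beyond differentiation is the reflection identity $G_{q}'(y)=-F_{q}'(-y)$ together with \eqref{B_star}, and I expect this to be the only step requiring care. The differentiability in $t$ at the boundary is obtained by the same scheme, differentiating the explicit expression in $t$ on each side and using \eqref{B_star} to reconcile the limits; the paper explicitly notes this parallel after the statement.
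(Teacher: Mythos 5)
Your proof is correct and follows essentially the same route as the paper: compute the one-sided derivatives of $J^*$ and $g$ at $B^*\sqrt{1-t}$ using \eqref{def_candidate2}, \eqref{expression_U_general} and Remark \ref{remark_j}, reconcile them via the reflection identity $G_{2n+1}'(y)=-F_{2n+1}'(-y)$ and the defining relation \eqref{B_star}, and obtain the lower boundary by symmetry. The only cosmetic difference is that the paper presents the algebraic collapse by rewriting both sides in terms of $F_{2n+1}'(\pm B^*)$ rather than isolating the reflection identity as a separate step.
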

\begin{proof} See Appendix \ref{appendix_proof}.
\end{proof}

\begin{lemma} \label{generator_prob2} (i) For $(t,x)$ such that $|x| < B^* \sqrt{1-t}$, we have $\mathcal{L} J^* (t,x)  = 0$.
(ii) For $(t,x)$ such that $|x| > B^* \sqrt{1-t}$,  we have $\mathcal{L} J^* (t,x) \leq 0$.
\end{lemma}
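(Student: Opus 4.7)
The plan is to exploit the self-similar structure $\xi(t,x)=(1-t)^{q/2}\zeta(x/\sqrt{1-t})$ with $q=2n+1$, which converts the PDE $\mathcal{L}\xi=0$ into the ODE \eqref{eq_ODE}, and then reduce each case to a routine calculation.

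For part (i), on the continuation region $|x|<B^*\sqrt{1-t}$, the expression \eqref{def_candidate2} shows that $J^*$ is of the form $(1-t)^{(2n+1)/2}\zeta(x/\sqrt{1-t})$ with $\zeta(y)=j(B^*)\,(F_{2n+1}+G_{2n+1})(y)$. Since both $F_{2n+1}$ and $G_{2n+1}$ solve the ODE \eqref{eq_ODE} with $q=2n+1$, so does any linear combination. By the transformation recalled in Section \ref{section_preliminaries}, this immediately gives $\mathcal{L}J^*(t,x)=0$.

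For part (ii), I would split by the sign of $x$. Consider first $x>B^*\sqrt{1-t}$, where $J^*(t,x)=g(t,x)=U(t,-x)+x^{2n+1}$. Since $-x<-B^*\sqrt{1-t}<B^*\sqrt{1-t}$, the value $U(t,-x)$ comes from the first case of \eqref{expression_U_general}, which upon using $F_{2n+1}(-y)=G_{2n+1}(y)$ is again of the self-similar form with $\zeta=cG_{2n+1}$ for a constant $c$; hence $\mathcal{L}[U(t,-x)]=0$. Consequently,
\begin{align*}
\mathcal{L}J^*(t,x)=\mathcal{L}x^{2n+1}=(2n+1)x^{2n-1}\Big[n-\frac{x^2}{1-t}\Big].
\end{align*}
The factor $x^{2n-1}\geq 0$ for $x>0$, and by Lemma \ref{lemma_root_n} we have $(B^*)^2\geq n$, so that $x^2>(B^*)^2(1-t)\geq n(1-t)$, making the bracket negative. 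The symmetric case $x<-B^*\sqrt{1-t}$ is handled identically: there $J^*(t,x)=U(t,x)-x^{2n+1}$ with $x<B^*\sqrt{1-t}$ in the continuation region of $U$, so $\mathcal{L}U(t,x)=0$, and
\begin{align*}
\mathcal{L}J^*(t,x)=-(2n+1)x^{2n-1}\Big[n-\frac{x^2}{1-t}\Big],
\end{align*}
which is again nonpositive since $x^{2n-1}\leq 0$ for $x<0$ and the bracket is negative by the same bound.

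There is no real obstacle: the only point to verify is the availability of the bound $(B^*)^2\geq n$ from Lemma \ref{lemma_root_n}, which is exactly what was prepared in Section \ref{section_preliminaries} for precisely this kind of verification argument (cf.\ the analogous step done there for the single-stopping problem).
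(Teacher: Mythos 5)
Your proof is correct and follows essentially the same route as the paper's: part (i) by the fact that $F_{2n+1}+G_{2n+1}$ solves the ODE \eqref{eq_ODE}, and part (ii) by splitting on the sign of $x$, observing that $\mathcal{L}U$ vanishes in the continuation region so only the monomial term $\pm x^{2n+1}$ contributes, and then invoking Lemma \ref{lemma_root_n}. The only cosmetic difference is that you spell out the identity $F_{2n+1}(-y)=G_{2n+1}(y)$ to justify the self-similar form of $U(t,-x)$, which the paper leaves implicit.
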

\begin{proof}
(i) This holds immediately in view of \eqref{def_candidate2} by the fact that $F_{2n+1}+G_{2n+1}$ solves the ODE \eqref{eq_ODE}.

(ii) For $x >  B^* \sqrt{1-t}$, as $J^*(t,x) = U(t,-x)+x^{2n+1}$ and because $\mathcal{L} U (t,-x) = 0$ for $-x <  0 < B^* \sqrt {1-t}$ in view of \eqref{expression_U_general},
\begin{align*}
\mathcal{L} J^* (t,x) = \mathcal{L} U (t,-x) + (2n+1)  \left[ n - \frac {x^2} {1-t} \right]  x^{2n-1}   =   - (2n+1)  \left[ \frac {x^2} {1-t}-n \right]  |x|^{2n-1}.
\end{align*}
For $x <  -B^* \sqrt{1-t}$, as $J^*(t,x) = U(t,x)-x^{2n+1}$ and because $\mathcal{L} U (t,x) = 0$ for $x <  0 < B^* \sqrt {1-t}$,
\begin{align*}
\mathcal{L} J^* (t,x) =  \mathcal{L} U (t,x) - (2n+1) \left[ n - \frac {x^2} {1-t} \right]   x^{2n-1}   =  - (2n+1) \left[ \frac {x^2} {1-t}-n \right]   |x|^{2n-1}.
\end{align*}
Hence, the proof is complete by Lemma \ref{lemma_root_n}.
\end{proof}

By Lemmas \ref{smoothness_problem2} and \ref{generator_prob2}, the verification of optimality is immediate.  We omit the proof of the following theorem because it is essentially the same as that of Theorem \ref{theorem_problem1}.
\begin{theorem}The function
$J^*$  is the value function.  Namely, $J(t, x) = J^*(t,x)$ for every $0 \leq t < 1$ and $x \in \R$; optimal stopping times are
\begin{align*}
\tau_1^* &:= \tau(B^*), \\ \tau_2^* &:= \left\{ \begin{array}{ll} \inf\{s\geq \tau(B^*): X_s\geq B^*\sqrt{1-s} \}, & \textrm{if } X_{\tau(B^*)} \leq -B^* \sqrt {1-\tau(B^*)}, \\
\inf\{s \geq \tau(B^*): X_s\leq -B^* \sqrt{1-s}\}, & \textrm{if } X_{\tau(B^*)} \geq B^* \sqrt {1-\tau(B^*)}.
\end{array} \right.   
\end{align*}
\end{theorem}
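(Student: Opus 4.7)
The plan is to mirror the proof of Theorem \ref{theorem_problem1} essentially verbatim, replacing the triple $(f, V^*, C^*)$ with $(g, J^*, B^*)$; the paper itself states the proof is ``essentially the same.'' All the required inputs are in place: Lemma \ref{smoothness_problem2} yields smooth fit across the two boundaries $\{x = \pm B^*\sqrt{1-t}\}$, Lemma \ref{generator_prob2} gives $\mathcal{L} J^* \leq 0$ off the boundary, and Lemma \ref{lemma_domination_prob2} establishes $J^* \geq g$.

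Concretely, I would first apply It\^o's formula to $J^*(s, X_s)$ (justified by Lemma \ref{smoothness_problem2}) to obtain
\[
\diff J^*(s, X_s) = \mathcal{L} J^*(s, X_s) \diff s + \frac{\partial}{\partial x} J^*(s, X_s) \diff W_s,
\]
whose drift is nonpositive by Lemma \ref{generator_prob2}, so $J^*(s, X_s)$ is a local supermartingale. Localizing by $\tau(M)$ from \eqref{def_tau_D} with $M > B^*$, on $[t, \nu \wedge \tau(M)]$ the state is bounded and $\partial_x J^*$ is bounded, so the stochastic integral is a true martingale there; combining with Lemma \ref{lemma_domination_prob2},
\[
\e_{t,x}\!\left[g(\nu \wedge \tau(M), X_{\nu \wedge \tau(M)})\right] \leq \e_{t,x}\!\left[J^*(\nu \wedge \tau(M), X_{\nu \wedge \tau(M)})\right] \leq J^*(t,x)
\]
for every $[t,1)$-valued stopping time $\nu$.

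Passing to the limit $M \to \infty$ is the step requiring the most care. Splitting the left-hand side along $\{\nu < \tau(M)\}$ and $\{\nu \geq \tau(M)\}$, the first piece converges to $\e_{t,x}[g(\nu, X_\nu)]$ by monotone convergence since $g \geq 0$. For the second piece I would derive the pointwise bound
\[
|g(s,y)| \leq (B^*)^{2n+1}(1-s)^{n+1/2} + |y|^{2n+1},
\]
using \eqref{expression_U_general} together with the monotonicity of $F_{2n+1}$ to control $U(s,-y) \leq (B^*)^{2n+1}(1-s)^{n+1/2}$ for $y \geq 0$. Then \eqref{moment_x_q} with $q = 2n+1$, together with the asymptotic $(F_{2n+1} + G_{2n+1})(M) \sim 2\sqrt{2\pi}\, M^{2n} e^{M^2/2}$ as $M \to \infty$, shows that $\e_{t,x}|X_{\tau(M)}|^{2n+1} \to 0$; this delivers both the uniform integrability and the vanishing of the tail contribution since also $\tau(M) \to 1$ and $X_{\tau(M)} \to 0$ a.s. The conclusion is $\e_{t,x}[g(\nu, X_\nu)] \leq J^*(t,x)$ for every admissible $\nu$.

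Finally, for attainment: when $|x| \leq B^* \sqrt{1-t}$, Lemma \ref{lemma_J_D} gives $J^*(t,x) = J_{B^*}(t,x) = \e_{t,x}[g(\tau(B^*), X_{\tau(B^*)})]$, and the strong Markov property together with the single-stopping optimizer identified in \eqref{expression_U_general} (applied on the opposite half-line by symmetry, depending on the sign of $X_{\tau(B^*)}$) shows that the pair $(\tau_1^*, \tau_2^*)$ realizes the two-stage payoff. When $|x| > B^* \sqrt{1-t}$, one has $\tau_1^* = t$ and $J^*(t,x) = g(t,x)$ is achieved by immediate stopping. The main anticipated obstacle is the uniform integrability step, but in view of the exponential decay of $M^{2n+1}/(F_{2n+1}+G_{2n+1})(M)$ this reduces to the same calculation as in Theorem \ref{theorem_problem1}, now with the $(2n+1)$-st moment in place of the first.
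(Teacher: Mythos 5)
Your proposal is correct and takes precisely the approach the paper intends: the paper explicitly omits this proof, stating it is ``essentially the same'' as that of Theorem \ref{theorem_problem1}, and your argument fills in exactly that verification (smooth fit via Lemma \ref{smoothness_problem2}, supermartingale drift via Lemma \ref{generator_prob2}, domination via Lemma \ref{lemma_domination_prob2}, localization by $\tau(M)$, a bound $|g(s,y)|\le (B^*)^{2n+1}(1-s)^{n+1/2}+|y|^{2n+1}$, and passage to the limit). The only blemish is cosmetic: since $G_{2n+1}(M)\to 0$ as $M\to\infty$, the asymptotic should read $(F_{2n+1}+G_{2n+1})(M)\sim\sqrt{2\pi}\,M^{2n}e^{M^2/2}$ rather than $2\sqrt{2\pi}\,M^{2n}e^{M^2/2}$, but this has no bearing on the conclusion that $M^{2n+1}/(F_{2n+1}+G_{2n+1})(M)\to 0$.
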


\section{Problem 3}  \label{section_problem3}
Our last problem is to solve, for fixed $q > 0$,
\begin{align}
W(t,x) := \sup_{t \leq \tau_1 \leq \tau_2< 1}\e_{t,x} \left[|X_{\tau_2}|^{q} -  |X_{\tau_1}|^{q} \right], \quad 0 \leq t< 1, \; x \in \R. \label{def_W}
\end{align}
By the strong Markov property, it can be written
\begin{align}
W(t,x) = \sup_{t \leq \tau < 1}\e_{t,x} \left[ h(\tau, X_\tau)\right], \quad 0 \leq t< 1, \; x \in \R, \label{W_single}
\end{align}
where
\begin{align*}
h(t,x):=\left\{\begin{array}{ll}
\overline{U}(t,x)-|x|^{q},& \mbox{if $|x|<D^*\sqrt{1-t}$},\\
0,&\mbox{if $|x|\geq D^*\sqrt{1-t}$}.
\end{array}\right.
\end{align*}
Here $\overline{U}(t,x)$ is the value function of the problem \eqref{ospbridge_general_absolute} and is written as \eqref{U_bar_expression}
with the same $D^*$ that satisfies \eqref{monotonicity_D}.

It is easily conjectured that the optimal stopping time for the problem  \eqref{W_single} is given by
\begin{align*}
\sigma(A) := \inf \{ s \geq t: |X_s| \leq A \sqrt{1-s}\}, 
\end{align*}
for some $A \in [0,D^*]$. Let us define its corresponding expected payoff by
\begin{align*}
W_A(t,x) &:= \e_{t,x} \left[ h(\sigma(A), X_{\sigma(A)})\right], \quad 0 \leq t< 1, \; x \in \R. 
\end{align*}
\begin{lemma} For all $(t,x)$ such that $|x| \geq A \sqrt{1-t}$,
\begin{align} \label{expression_W_A}
W_A(t,x) 
= (1-t)^{q/2} G_q(|x|/\sqrt{1-t})w(A),
\end{align}
where we define
\begin{align*}
w(A) :=\frac 1 {G_q(A)} \left[{(D^*)^q} \frac {(F_{q}+G_{q})(A)} {(F_{q}+G_{q})(D^*)} - A^q \right], \quad 0 \leq A \leq D^*.
\end{align*}
\end{lemma}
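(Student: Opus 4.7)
The plan is to exploit the structure that $h(\sigma(A), X_{\sigma(A)})$ splits as a time-decay factor times a constant depending only on $A$, so that $W_A$ reduces to a scalar multiple of the expectation $\phi(t,x):=\e_{t,x}[(1-\sigma(A))^{q/2}]$. This latter expectation is then computed by the same ODE approach used for \eqref{moment_time_q} and \eqref{expectation_moments_2n_1}.

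For the first step, since $X_s/\sqrt{1-s}$ is (up to a standard time-change) a Brownian motion, $\sigma(A)<1$ almost surely. At the stopping time $|X_{\sigma(A)}|=A\sqrt{1-\sigma(A)}\leq D^*\sqrt{1-\sigma(A)}$, so $h$ is evaluated on its nontrivial branch. Substituting \eqref{U_bar_expression} and using that $F_q+G_q$ is even gives
\[
h(\sigma(A),X_{\sigma(A)}) = (1-\sigma(A))^{q/2}\left[(D^*)^q\frac{(F_q+G_q)(A)}{(F_q+G_q)(D^*)} - A^q\right] = (1-\sigma(A))^{q/2} G_q(A) w(A),
\]
so that $W_A(t,x)=G_q(A)\,w(A)\,\phi(t,x)$.

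For $\phi$, the symmetry $x\mapsto -x$ of the SDE \eqref{BB_SDE} and of the strip $\{|x|\leq A\sqrt{1-s}\}$ implies $\phi$ depends on $x$ only through $|x|$, so one may restrict to $x\geq A\sqrt{1-t}$. Standard optional stopping/Feynman--Kac arguments give $\mathcal{L}\phi=0$ on $\{x>A\sqrt{1-s}\}$ with the boundary condition $\phi(t,A\sqrt{1-t})=(1-t)^{q/2}$. The substitution $\phi(t,x)=(1-t)^{q/2}\zeta(x/\sqrt{1-t})$, exactly as in \eqref{pde}--\eqref{eq_ODE}, reduces this to the ODE \eqref{eq_ODE} on $y>A$ with $\zeta(A)=1$. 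The general solution is $c_1 F_q+c_2 G_q$; the trivial bound $\phi\leq 1$ combined with the facts that $F_q(y)\to\infty$ while $G_q(y)\to 0$ as $y\to\infty$ forces $c_1=0$, and the boundary condition then gives $c_2=1/G_q(A)$. Hence $\phi(t,x)=(1-t)^{q/2}G_q(|x|/\sqrt{1-t})/G_q(A)$, which combined with the first step yields \eqref{expression_W_A}.

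The main obstacle is arguably the rigorous selection $c_1=0$; the cleanest alternative avoids any appeal to uniqueness for the PDE on an unbounded domain by solving the problem first on the finite strip $\{A\sqrt{1-s}\leq|x|\leq M\sqrt{1-s}\}$ with boundary value zero on the outer edge (yielding explicit coefficients in $F_q(A),F_q(M),G_q(A),G_q(M)$) and then sending $M\to\infty$, with the bound $(1-\sigma(A))^{q/2}\leq 1$ justifying the limit by dominated convergence.
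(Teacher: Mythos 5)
Your proof is correct and follows essentially the same route as the paper: reduce to $\e_{t,x}[(1-\sigma(A))^{q/2}]$ via \eqref{U_bar_expression}, solve the ODE \eqref{eq_ODE} with boundary condition $\zeta(A)=1$, kill the $F_q$ coefficient by the growth condition at infinity, and extend to $x\leq -A\sqrt{1-t}$ by symmetry. Your extra care in justifying $c_1=0$ (via the a priori bound $\phi\leq 1$ and the optional finite-strip truncation) is more explicit than the paper, which simply imposes $\lim_{y\to\infty}\zeta(y)=0$ as a boundary condition without comment.
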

\begin{proof}
Suppose first that $x \geq A \sqrt{1-t}$ (then we must have $X_{\sigma(A)} = A \sqrt{1-\sigma(A)}$ a.s.). By \eqref{U_bar_expression},
\begin{align} \label{W_A_pre}
\begin{split}
W_A(t,x) 
&= \e_{t,x} \left[(1-\sigma(A))^{q/2} \right] \left[  {(D^*)^q}  \frac {(F_{q}+G_{q})(A)} {(F_{q}+G_{q})(D^*)} - A^q  \right],
\end{split}
\end{align}
and hence the problem boils down to computing the expectation on the right-hand side. As we have discussed in Section \ref{section_preliminaries}, we can write
\begin{align*}
\e_{t,x} \left[(1-\sigma(A))^{q/2} \right] = (1-t)^{q/2} \zeta(x / \sqrt{1-t}),
\end{align*}
where the function $\zeta$ satisfies the ODE \eqref{eq_ODE} with boundary conditions $\zeta(A)=1$ and $\lim_{y \rightarrow \infty}\zeta(y) = 0$.    A general solution of  \eqref{eq_ODE} is given by $\zeta(y) = \alpha F_q (y) + \beta G_q(y)$,
with the values of $\alpha$ and $\beta$ to be determined.
Because $\lim_{y \rightarrow \infty}F_q(y)=\infty$ and $\lim_{y \rightarrow \infty}G_q(y)=0$, we must have $\alpha = 0$. Solving
$\zeta(A)=1$, we have $\beta = {G_q(A)}^{-1}$. Plugging this in the right-hand side of  \eqref{W_A_pre} gives \eqref{expression_W_A}.


Finally, by symmetry, we have $W_A(t,x) = W_A(t,-x)$, $x \geq 0$.
Hence, the result can be extended to $x \leq -A \sqrt{1-t}$ as well.
\end{proof}

In view of \eqref{expression_W_A}, we shall maximize the function $w$. As shown in Figure \ref{plot_W}, $w$ admits a unique maximizer.  We shall show this analytically in Lemma  \ref{conjecture_A} below.  Notice that
\begin{align}
w(0)= \frac {2(D^*)^q} {(F_{q}+G_{q})(D^*)} > 0\quad \textrm{and} \quad w(D^*) = 0, \label{w_specific}
\end{align}
and, for all $A > 0$,
\begin{align} 
 w'(A) = \left[{(D^*)^q} \frac {(F_{q}+G_{q})'(A)} {(F_{q}+G_{q})(D^*)} - q A^{q-1} \right] \frac 1 {G_q(A)} - \left[{(D^*)^q} \frac {(F_{q}+G_{q})(A)} {(F_{q}+G_{q})(D^*)} - A^q \right] \frac {G_q'(A)} {(G_q(A))^2}. \label{w_prime}
\end{align}

\begin{lemma} \label{conjecture_A}
(1) There exists a unique maximizer of $w$ over $[0, D^*]$, which we call $A^*$, such that
\begin{align}
A^* \leq \sqrt{(q-1)/2 \vee 0} \label{A_star_bound}
\end{align}
 and
\begin{align}
w'(A) \leq 0 \Longleftrightarrow A \geq A^*, \quad 0 \leq A \leq D^*. \label{w_prime_sign}
\end{align}
(2) Moreover, $A^* = 0$ if and only if $q \leq 1$.
\end{lemma}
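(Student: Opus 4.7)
The plan is to simplify $w'(A)$ using the Wronskian of $F_q$ and $G_q$, and then analyze the resulting expression relative to the threshold $\sqrt{(q-1)/2 \vee 0}$. Since both $F_q$ and $G_q$ satisfy the ODE \eqref{eq_ODE}, the Wronskian $W(A) := F_q'(A) G_q(A) - F_q(A) G_q'(A)$ satisfies $W'(A) = A\, W(A)$, giving $W(A) = \kappa\, e^{A^2/2}$ with $\kappa := 2F_q(0) F_q'(0) > 0$. Since $(F_q + G_q)' G_q - (F_q + G_q) G_q' = W$, substituting into \eqref{w_prime} yields
$$G_q(A)^2 w'(A) = \frac{\kappa (D^*)^q}{(F_q + G_q)(D^*)}\, e^{A^2/2} - A^{q-1}\bigl[qG_q(A) - A G_q'(A)\bigr] =: \Psi(A).$$
Therefore $w'(A)$ has the same sign as $c - \Theta(A)$, where $c := \kappa (D^*)^q/(F_q + G_q)(D^*)$ and $\Theta(A) := A^{q-1}[qG_q(A) - A G_q'(A)]\, e^{-A^2/2}$.

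Next I would compute $\Theta'$ by eliminating $G_q''$ via the ODE relation $G_q''(A) = A G_q'(A) + qG_q(A)$. After the cancellations one obtains the clean identity
$$\Theta'(A) = q G_q(A)\, A^{q-2} (q - 1 - 2A^2)\, e^{-A^2/2},$$
so $\Theta$ is strictly increasing on $[0, \sqrt{(q-1)/2}\,]$ and strictly decreasing on $[\sqrt{(q-1)/2}, \infty)$ when $q > 1$, and strictly decreasing on $(0, \infty)$ when $q \leq 1$. For the endpoints: inserting the defining equation $D^*(F_q + G_q)'(D^*) = q(F_q + G_q)(D^*)$ of $D^*$ directly into the original expression for $\Psi$ gives $\Psi(D^*) = 0$; and $\Theta(0^+) = 0$ for $q > 1$ gives $\Psi(0) = c > 0$, while $\Theta(0) = \sqrt{\pi/2}$ for $q = 1$ (using $D^* = 1$ and $\kappa = \sqrt{2\pi}$) gives $\Psi(0) = e^{-1/2} - \sqrt{\pi/2} < 0$, and $\Theta(0^+) = +\infty$ for $q < 1$ gives $\Psi(0^+) = -\infty$.

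With these ingredients the conclusion is immediate. For $q > 1$, $c - \Theta$ starts positive at $A = 0$, decreases to a minimum at $A = \sqrt{(q-1)/2}$, then increases back to $0$ at $D^*$; the minimum must be strictly negative, since otherwise $w$ would be non-decreasing on $[0, D^*]$, contradicting $w(D^*) = 0 < w(0)$. Hence $\Psi$ has a unique sign change on $(0, D^*)$ at some $A^* \in (0, \sqrt{(q-1)/2})$, establishing \eqref{w_prime_sign}, the bound \eqref{A_star_bound}, and $A^* > 0$. For $q \leq 1$, $c - \Theta$ is monotonically increasing on $(0, D^*]$ and vanishes at $D^*$, so $\Psi \leq 0$ throughout $[0, D^*]$, $w$ is non-increasing, and $A^* = 0$. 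The two cases together yield part~(2). The main obstacle is the derivation of $\Theta'$: the clean factor $(q - 1 - 2A^2)$ is not visible a priori, and obtaining it requires a careful cancellation using \eqref{eq_ODE}. Once this factorization is in hand the threshold $\sqrt{(q-1)/2}$ emerges naturally, matching the generator-based bound in Lemma \ref{lemma_root_q}.
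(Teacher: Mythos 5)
Your proof is correct and follows essentially the same strategy as the paper: normalize $w'$ by the Wronskian $F_q'G_q - F_q G_q'$, invoke the ODE \eqref{eq_ODE} to show that the derivative of the normalized quantity changes sign precisely at $A = \sqrt{(q-1)/2 \vee 0}$, and then combine this with $w'(D^*) = 0$ and the bound $D^* \geq \sqrt{(q-1)/2\vee 0}$ from Lemma \ref{lemma_root_q} to locate $A^*$. The one genuine simplification in your write-up is that you solve the Wronskian equation explicitly to get $\kappa e^{A^2/2}$, whereas the paper carries the unsolved Wronskian through and differentiates a quotient; working with $\Theta(A) = e^{-A^2/2}A^{q-1}[qG_q(A) - AG_q'(A)]$ turns the key cancellation into a tidy product-rule computation yielding $\Theta'(A) = qG_q(A)A^{q-2}(q-1-2A^2)e^{-A^2/2}$. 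Your endpoint argument for $A^*>0$ when $q>1$ — deducing the strict negativity of the minimum of $c-\Theta$ from $w(0) > 0 = w(D^*)$ rather than computing $w'(0^+)$ directly as the paper does — is a minor variation, and both routes are valid.
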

\begin{proof}
(1) For all $A > 0$, let us define
\begin{align*} 
L(A) &:= \frac {(F_{q}+G_{q})'(A) G_q(A) - {(F_{q}+G_{q})(A)} G_q'(A)} {(G_q(A))^2} = \frac {F_{q}'(A) G_q(A) - {F_{q}(A)} G_q'(A)} {(G_q(A))^2} > 0. 
\end{align*}
Then,  for any $A > 0$,
\begin{align} \label{fraction_w_prime_L}
\begin{split}
\frac  {w'(A)} {L(A)} &=  \frac 1 {L(A)} \Big[ -\frac {q A^{q-1} } {G_q(A)} + \frac {A^qG_q'(A)} {(G_q(A))^2} \Big] + \frac {(D^*)^q} {(F_q+G_q) (D^*)} \\
&=  \frac {- q A^{q-1} G_q(A) + {A^qG_q'(A)}} {F_{q}'(A) G_q(A) - {F_{q}(A)} G_q'(A)}+ \frac {(D^*)^q} {(F_q+G_q) (D^*)}.
\end{split}
\end{align}
Because $F_q$ and $G_q$ satisfy the ODE \eqref{eq_ODE},
\begin{align*}
F_q''(A) &= A F_q'(A) + q F_q(A) \quad \textrm{and} \quad G_q''(A) = A G_q'(A) + q G_q(A). 
\end{align*} 
This gives
\begin{align*}
F_{q}''(A) G_q(A) - {F_{q}(A)} G_q''(A) &=  [A F_q'(A) + q F_q(A)]G_q(A) - F_q(A) [A G_q'(A) + q G_q(A)] \\
&= A [ F_{q}'(A) G_q(A) - {F_{q}(A)} G_q'(A) ].
\end{align*}
Now differentiating \eqref{fraction_w_prime_L} with respect to $A$, 
\begin{align*}
&[F_{q}'(A) G_q(A) - {F_{q}(A)} G_q'(A)]^2 \frac \partial {\partial A}\frac  {w'(A)} {L(A)}  \\
&=  [- q (q-1) A^{q-2} G_q(A) + {A^qG_q''(A)}] [F_{q}'(A) G_q(A) - {F_{q}(A)} G_q'(A)] \\
&\qquad -  [- q A^{q-1} G_q(A) + {A^qG_q'(A)}] [F_{q}''(A) G_q(A) - {F_{q}(A)} G_q''(A)] \\
&=  [- q (q-1) A^{q-2} G_q(A) + A^{q+1} G_q'(A) + q A^q G_q(A)] [F_{q}'(A) G_q(A) - {F_{q}(A)} G_q'(A)] \\
&\qquad -  [- q A^{q} G_q(A) + {A^{q+1}G_q'(A)}]  [F_{q}'(A) G_q(A) - {F_{q}(A)} G_q'(A)] \\
&=  2q A^{q-2}G_q(A) [F_{q}'(A) G_q(A) - {F_{q}(A)} G_q'(A)]   [A^2 - (q-1)/2]. 
\end{align*}
That is, $w'(A)/L(A)$ is differentiable and
\begin{align*}
\frac {F_{q}'(A) G_q(A) - {F_{q}(A)} G_q'(A)} {2q A^{q-2}G_q(A) } \frac \partial {\partial A}\frac  {w'(A)} {L(A)}  =   A^2 - \frac {q-1} 2.
\end{align*}
Notice that $F_{q}'(A) G_q(A) - {F_{q}(A)} G_q'(A)$ and $G_q(A)$ are both positive for $A > 0$.
Hence we see that
 \begin{align*}
 \frac \partial {\partial A} \frac  {w'(A)} {L(A)}  > 0 \Longleftrightarrow A >  \sqrt{(q-1)/2 \vee 0}, \quad A > 0.
 \end{align*}
Recall $D^*$ or the unique root of the equation \eqref{def_D}. The equivalence above together with $w'(D^*) = 0$ (and $w'(D^*)/ L(D^*) = 0$) and recalling that $D^* \geq  \sqrt{(q-1)/2 \vee 0}$ (as in Lemma  \ref{lemma_root_q}) and $L(\cdot) > 0$ shows that there exists a unique $A^* \in [0, \sqrt{(q-1)/2 \vee 0}]$ such that, for all $0 < A < D^*$, 
 \begin{align}
 w'(A) \leq  0 \Longleftrightarrow A \geq A^*, \label{w_L_sign}
 \end{align}
as desired.
 
(2) Suppose first that $q \leq 1$. Then  $A^* \leq  \sqrt{(q-1)/2 \vee 0} = 0$ and hence we must have $A^* = 0$.
Now suppose $q > 1$. Then, by taking a limit in \eqref{w_prime} and noticing that $(F_{q}+G_{q})'(0) = 0$,
\begin{align*} 
 w'(0+) = - {(D^*)^q} \frac {(F_{q}+G_{q})(0)} {(F_{q}+G_{q})(D^*)}  \frac {G_q'(0)} {(G_q(0))^2} > 0.
\end{align*}
This together with \eqref{w_L_sign} shows $A^* > 0$.

\end{proof}


\begin{figure}[htbp]
\begin{minipage}{1.0\textwidth}
\centering
\begin{tabular}{c}
 \includegraphics[scale=0.58]{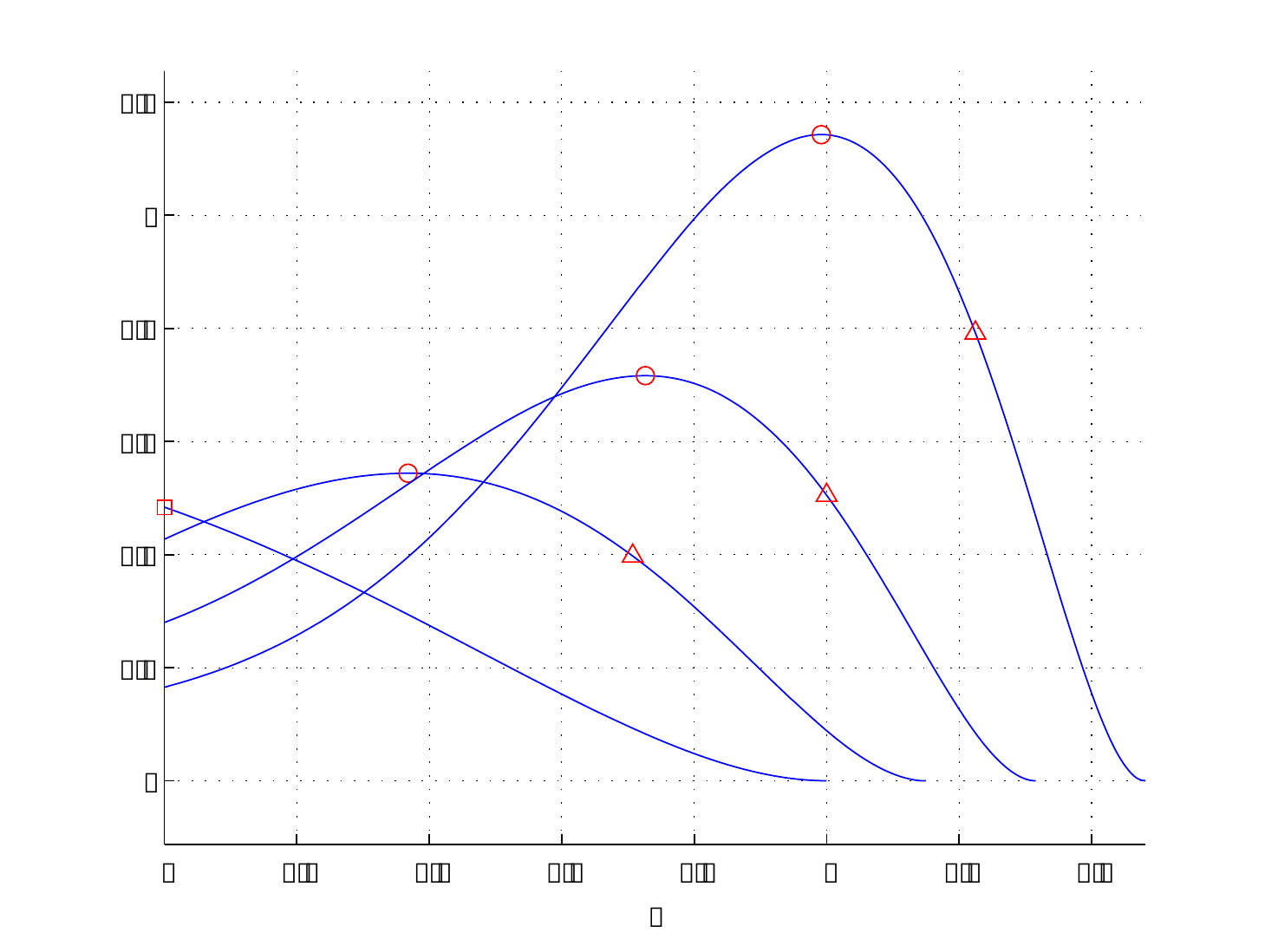}
\end{tabular}
\end{minipage}\caption{Plots of $w(A)$ on $[0, D^*]$ for $q=1,2,3,4$. For $q=2,3,4$, the circles (resp.\ triangles) indicate the points at $A^*$ (resp.\ $\sqrt{(q-1)/2}$).  For $q=1$, the square indicates the point at $A^* = \sqrt{(q-1)/2} = 0$.} \label{plot_W}
\end{figure}

Now we define
\begin{align}
W^*(t,x) := W_{A^*}(t,x)
&= \left\{ \begin{array}{ll}(1-t)^{q/2} G_q(|x|/\sqrt{1-t})w(A^*), & |x| > A^* \sqrt{1-t}, \\ h(t,x), & |x| \leq A^* \sqrt{1-t},  \end{array}\right. \label{def_candidate3}
\end{align}
as our candidate value function, and verify the optimality. Figure \ref{plot_prob3} plots $W^*$ and $h$ for $t=0$ as a function of $x$; this suggests Lemmas \ref{lemma_domination_prob3} and  \ref{smooth_fit_problem3}, which we shall prove analytically below. 

\begin{figure}[htbp]
\begin{minipage}{1.0\textwidth}
\centering
\begin{tabular}{cc}
 \includegraphics[scale=0.56]{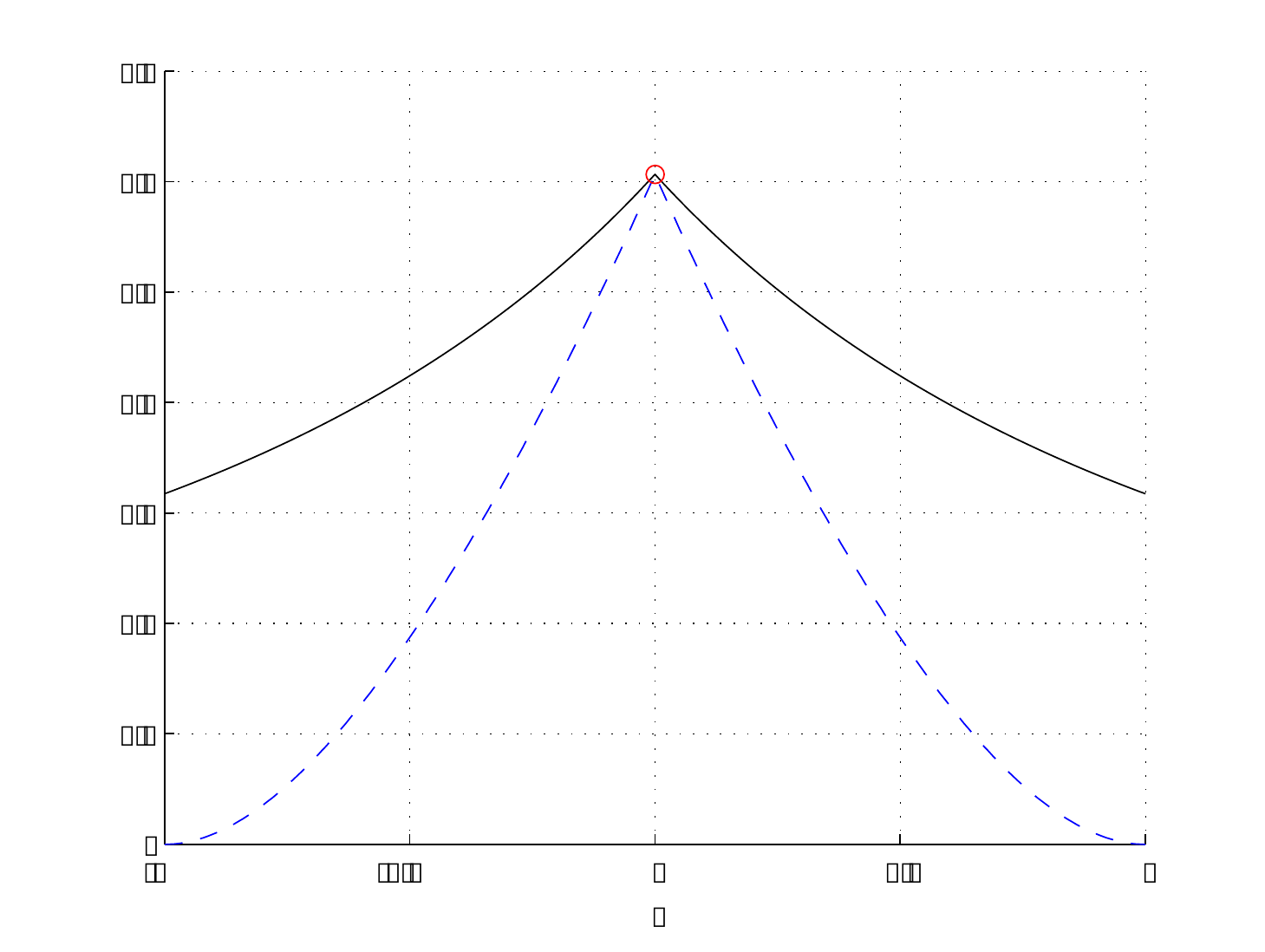} & \includegraphics[scale=0.56]{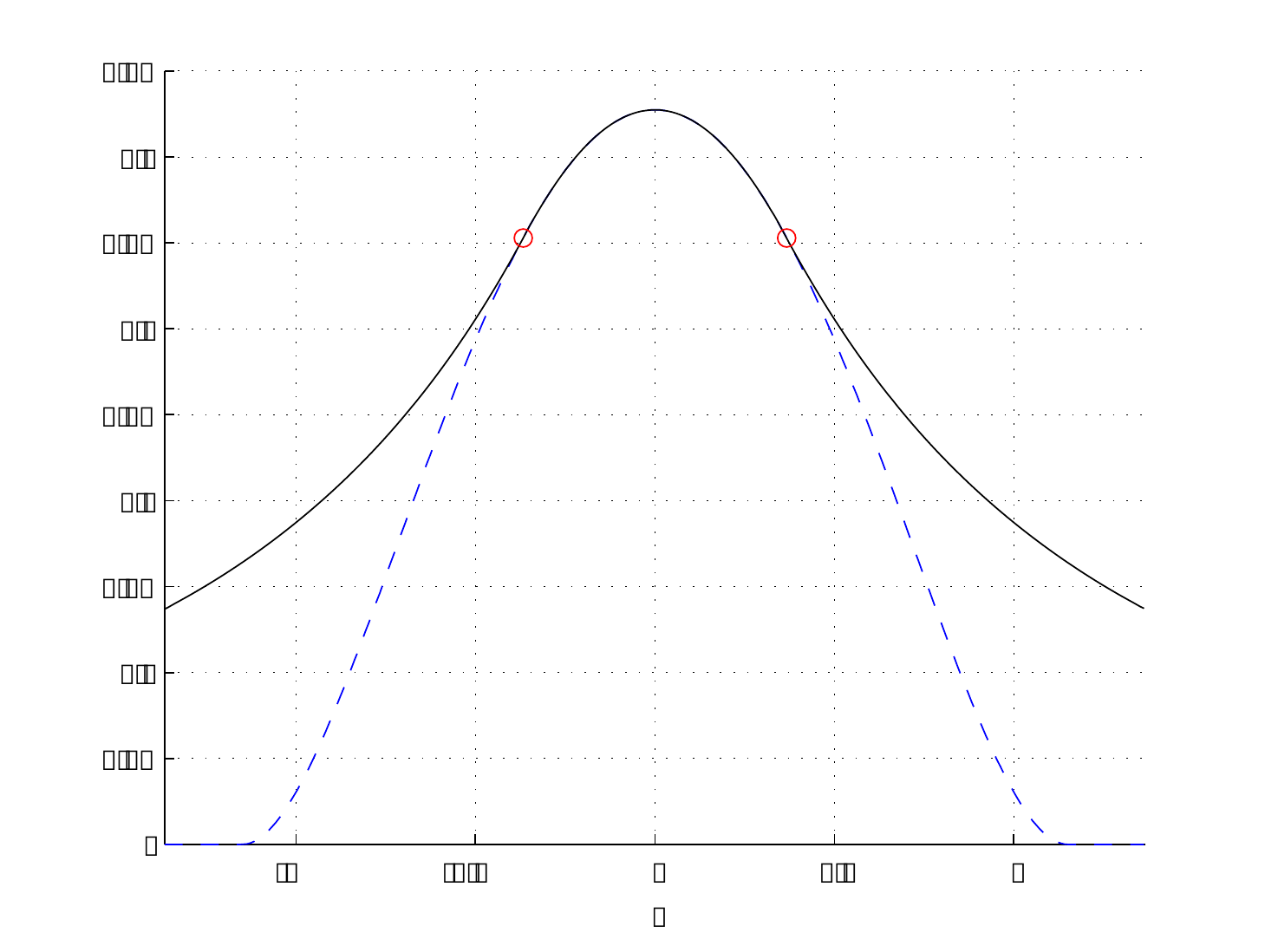} \\
$q=1$ & $q=2$ \\
 \includegraphics[scale=0.56]{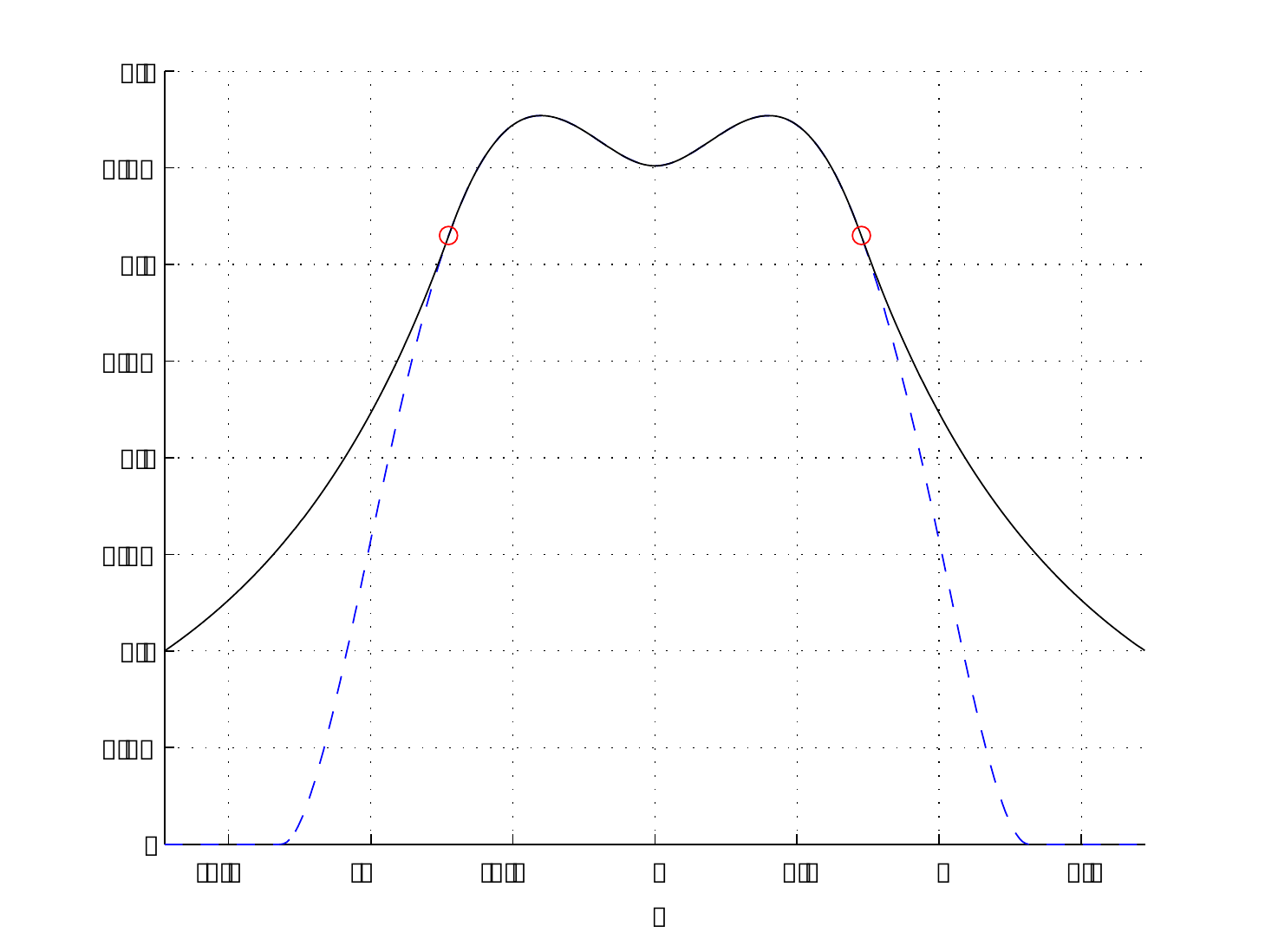} & \includegraphics[scale=0.56]{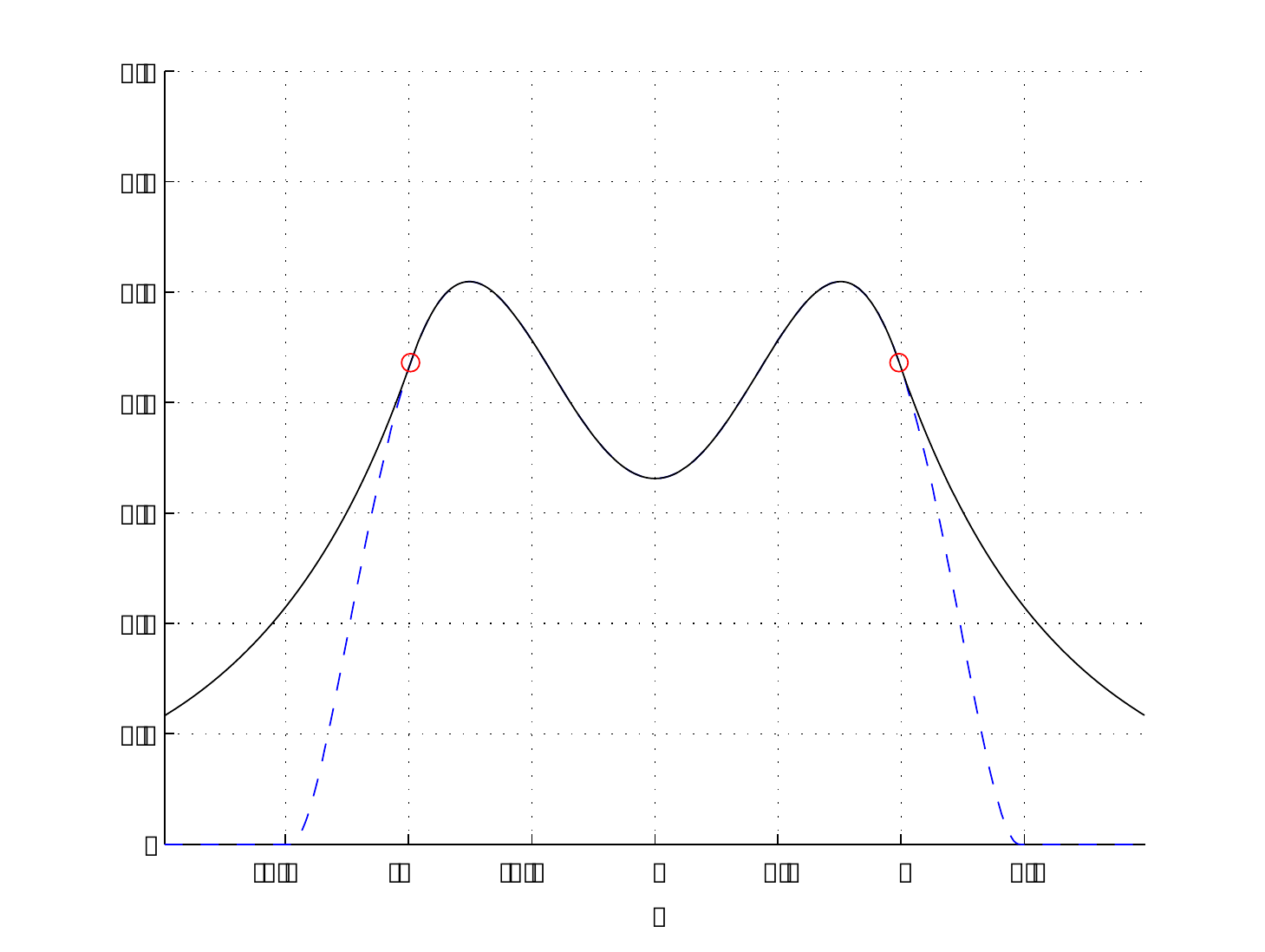} \\
$q=3$ & $q=4$ 
\end{tabular}
\end{minipage}\caption{Plots of $W^*(0,\cdot)$ (solid) and $h(0,\cdot)$ (dotted) for $q=1,2,3,4$. Circles indicate the points at $A^*$ and $-A^*$.} \label{plot_prob3}
\end{figure}


\begin{lemma} \label{lemma_domination_prob3} We have $W^*(t,x) \geq h(t,x)$ for $0 \leq t < 1$ and $x \in \R$.
\end{lemma}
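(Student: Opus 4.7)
The plan is to mimic the strategy used in Lemmas \ref{lemma_domination_prob1} and \ref{lemma_domination_prob2}: split the real line into regions according to the position of $|x|$ relative to $A^*\sqrt{1-t}$ and $D^*\sqrt{1-t}$, handle the trivial regions directly, and use the maximality of $w$ at $A^*$ on $[0,D^*]$ combined with a continuous-fit identity on the intermediate region.

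First I would observe that by the symmetry $W_A(t,x) = W_A(t,-x)$ (which is inherited from the symmetry of the problem around the origin and also visible in the expression \eqref{expression_W_A} via $G_q(|x|/\sqrt{1-t})$), it suffices to consider $x \geq 0$. For $0 \leq x \leq A^*\sqrt{1-t}$, the definition \eqref{def_candidate3} gives $W^*(t,x) = h(t,x)$ so equality holds. For $x \geq D^*\sqrt{1-t}$, we have $h(t,x) = 0$, while $W^*(t,x) = (1-t)^{q/2} G_q(x/\sqrt{1-t}) w(A^*) \geq 0$ since $G_q$ is strictly positive and $w(A^*) \geq w(D^*) = 0$ by the maximality established in Lemma \ref{conjecture_A} together with \eqref{w_specific}.

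The only non-trivial region is $A^*\sqrt{1-t} < x < D^*\sqrt{1-t}$. Here the key observation is a continuous-fit identity: if we choose $A = x/\sqrt{1-t} \in [A^*, D^*]$, then $\sigma(A) = t$ almost surely under $\p_{t,x}$, so
\[
W_{x/\sqrt{1-t}}(t,x) = h(t,x).
\]
On the other hand, the same $A$ lies in the domain $|x| \geq A\sqrt{1-t}$ of validity of \eqref{expression_W_A}, and therefore
\[
W_{x/\sqrt{1-t}}(t,x) = (1-t)^{q/2} G_q\bigl(x/\sqrt{1-t}\bigr)\, w\bigl(x/\sqrt{1-t}\bigr).
\]
Combining these with the expression for $W^*(t,x) = W_{A^*}(t,x)$ on this region reduces the desired inequality to
\[
w(A^*) \geq w\bigl(x/\sqrt{1-t}\bigr),
\]
which holds by the maximality of $A^*$ on $[0,D^*]$ proved in Lemma \ref{conjecture_A}.

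I do not anticipate a serious obstacle: the argument is essentially the same three-piece comparison as in Lemmas \ref{lemma_domination_prob1} and \ref{lemma_domination_prob2}, and the only ingredients needed are (i) the symmetry of $W^*$ and $h$ in $x$, (ii) the continuous-fit identity $W_{A}(t,x) = h(t,x)$ at $A = x/\sqrt{1-t}$, and (iii) the global maximality of $w$ at $A^*$ over $[0,D^*]$, all of which have already been set up. The one detail to be careful about is ensuring $A^* \in [0,D^*]$ so that the comparison with $x/\sqrt{1-t} \in [A^*,D^*]$ is within the range where maximality of $w$ applies; this is guaranteed by \eqref{A_star_bound} and $D^* \geq \sqrt{(q-1)/2 \vee 0}$ from Lemma \ref{lemma_root_q}.
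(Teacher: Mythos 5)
Your proof is correct and follows essentially the same three-case decomposition (trivial region $x \ge D^*\sqrt{1-t}$, equality region $0 \le x \le A^*\sqrt{1-t}$, and intermediate region via continuous fit plus maximality of $w$ at $A^*$) as the paper's own argument. The only difference is that you spell out a few justifications the paper leaves implicit, such as $w(A^*) \ge w(D^*) = 0$ via \eqref{w_specific} and the check that $A^* \in [0, D^*]$.
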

\begin{proof} Due to the symmetry of both $W^*$ and $h$ with respect to $x$, it is sufficient to show for $x \geq 0$.

When $x \geq D^* \sqrt{1-t}$, then $W^*(t,x) \geq 0 = h(t,x)$.  When $0 \leq x \leq A^*  \sqrt{1-t}$, then $W^*(t,x) = h(t,x)$ by definition.  
Finally, if $A^* \sqrt{1-t} < x < D^* \sqrt{1-t}$, due to continuous fit and the maximality of $A^*$ on $[0,\infty)$ we derive that
$W^*(t,x)=W_{A^*}(t,x)\geq W_{x/\sqrt{1-t}}(t,x) = h(t,x)$, as desired.
\end{proof}

Similarly to Problems 1 and 2, $W^*(t,x)$ is smooth enough to apply It\^o's formula at any $(t,x)$ such that $|x| \neq A^* \sqrt{1-t}$; see also \eqref{smoothness_at_0_if_A_great} below regarding the smoothness at $0$ when $A^* \neq 0$.  Regarding the smoothness on $|x| = A^* \sqrt{1-t}$, we have the following.
\begin{lemma} \label{smooth_fit_problem3} Fix $0 \leq t < 1$. (i)  If $A^* > 0$, then smooth fit 
\begin{align}\label{smooth_fit_condition_prob3}
\begin{split}
\lim_{x \downarrow A^* \sqrt{1-t}}\frac \partial {\partial x} W^*(t,x) &= \lim_{x \uparrow A^* \sqrt{1-t}} \frac \partial {\partial x} h(t,x), \\ \lim_{x \uparrow -A^* \sqrt{1-t}}\frac \partial {\partial x} W^*(t,x) &= \lim_{x \downarrow -A^* \sqrt{1-t}} \frac \partial {\partial x} h(t,x),  
\end{split}
\end{align}
holds.  (ii) If $A^*=0$, we have an inequality \begin{align}
\lim_{x \downarrow 0}\frac \partial {\partial x}W^*(t,x)  < \lim_{x \uparrow 0} \frac \partial {\partial x}W^*(t,x). \label{W_zero_inequality}
\end{align}
\end{lemma}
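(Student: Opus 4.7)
The plan is to read smooth fit directly off the first-order optimality condition $w'(A^*) = 0$ that defines $A^*$ (cf.\ Lemma \ref{conjecture_A}). Set $y := x/\sqrt{1-t}$ and work on the positive boundary $x = A^* \sqrt{1-t}$; the case at $-A^*\sqrt{1-t}$ then follows by the symmetry $W^*(t,x) = W^*(t,-x)$ and $h(t,x) = h(t,-x)$.

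For part (i), with $A^* > 0$, on the region $x > A^* \sqrt{1-t}$ we have $W^*(t,x) = (1-t)^{q/2} G_q(x/\sqrt{1-t}) w(A^*)$, so
\begin{align*}
\lim_{x \downarrow A^* \sqrt{1-t}}\frac \partial {\partial x} W^*(t,x) = (1-t)^{(q-1)/2} G_q'(A^*) w(A^*).
\end{align*}
On $0 < x < A^* \sqrt{1-t}$, using the explicit form \eqref{U_bar_expression} for $\overline{U}$,
\begin{align*}
h(t,x) = (1-t)^{q/2} (D^*)^q \frac{(F_q+G_q)(x/\sqrt{1-t})}{(F_q+G_q)(D^*)} - x^q,
\end{align*}
so that
\begin{align*}
\lim_{x \uparrow A^* \sqrt{1-t}} \frac \partial {\partial x} h(t,x) = (1-t)^{(q-1)/2}\left[ (D^*)^q \frac{(F_q+G_q)'(A^*)}{(F_q+G_q)(D^*)} - q (A^*)^{q-1}\right].
\end{align*}
The key step is then to differentiate the identity $w(A) G_q(A) = (D^*)^q (F_q+G_q)(A)/(F_q+G_q)(D^*) - A^q$, which yields
\begin{align*}
w'(A) G_q(A) + w(A) G_q'(A) = (D^*)^q \frac{(F_q+G_q)'(A)}{(F_q+G_q)(D^*)} - q A^{q-1}.
\end{align*}
Evaluating at $A = A^*$ and using $w'(A^*) = 0$ equates the two one-sided derivatives, giving smooth fit.

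For part (ii), when $A^* = 0$ the interior region $|x| \leq A^* \sqrt{1-t}$ collapses to $\{x = 0\}$, and $W^*(t,x) = (1-t)^{q/2} G_q(|x|/\sqrt{1-t}) w(0)$ for all $x \neq 0$. Differentiating directly,
\begin{align*}
\lim_{x \downarrow 0}\frac \partial {\partial x}W^*(t,x) &= (1-t)^{(q-1)/2} G_q'(0)\, w(0), \\
\lim_{x \uparrow 0}\frac \partial {\partial x}W^*(t,x) &= -(1-t)^{(q-1)/2} G_q'(0)\, w(0).
\end{align*}
From the integral representation \eqref{def_F_G}, $F_q'(0) = \int_0^\infty u^q e^{-u^2/2} \diff u > 0$, and since $G_q(y) = F_q(-y)$ we have $G_q'(0) = -F_q'(0) < 0$. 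Combined with $w(0) > 0$ from \eqref{w_specific}, the right limit is strictly negative and the left limit is strictly positive, yielding \eqref{W_zero_inequality}.

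I do not anticipate a major obstacle; the only point requiring some care is making sure that the first-order condition $w'(A^*) = 0$ is the right translation of smooth fit, which the identity above makes transparent. The $V$-shape at the origin in part (ii) is a structural consequence of $W^*$ being built from $G_q(|x|/\sqrt{1-t})$, so no delicate estimate is needed.
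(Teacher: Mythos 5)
Your proof is correct and takes essentially the same route as the paper: the paper computes the two one-sided derivatives in identical form and then invokes the expression \eqref{w_prime} for $w'$ directly, while you rearrange the same computation by differentiating $w(A)G_q(A)$ via the product rule — algebraically the same step. Part (ii) also matches the paper, with the minor pleasant addition that you explicitly justify $G_q'(0)<0$ from the integral representation \eqref{def_F_G}.
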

\begin{proof}

(i) We have
\begin{align*}
\lim_{x \downarrow A^* \sqrt{1-t}}\frac \partial {\partial x} W^*(t,x) = (1-t)^{(q-1)/2} \frac {G_q'(A^*)} {G_q(A^*)} \left[{(D^*)^q} \frac {(F_{q}+G_{q})(A^*)} {(F_{q}+G_{q})(D^*)} - (A^*)^q \right].
\end{align*}
On the other hand,
\begin{align*}
\lim_{x \uparrow A^* \sqrt{1-t}} \frac \partial  {\partial x} h(t,x)  = (1-t)^{(q-1)/2} \left[ (D^*)^q \frac {(F_{q}+G_{q})'(A^*)} {(F_{q}+G_{q})(D^*)}  - q (A^*)^{q-1} \right].
\end{align*}
When $A^* > 0$, Lemma \ref{conjecture_A} implies $w'(A^*)=0$. In view of \eqref{w_prime}, the two equations above are the same.
Hence the first equality of \eqref{smooth_fit_condition_prob3} holds.  The proof of the second equality holds by symmetry.

(ii) Now suppose $A^* = 0$.  In this case, by \eqref{w_specific},
\begin{align*}
W^*(t,x)
= 2 (1-t)^{q/2}  {(D^*)^q} \frac {G_q(|x|/\sqrt{1-t})} {(F_{q}+G_{q})(D^*)},
\end{align*}
and hence taking derivatives and then limits,
\begin{align*}
\lim_{x \downarrow 0}\frac \partial {\partial x}W^*(t,x)
&= 2(1-t)^{(q-1)/2} {(D^*)^q}  \frac {G_q'(0)}  {(F_{q}+G_{q})(D^*)}, \\
\lim_{x \uparrow 0} \frac \partial {\partial x}W^*(t,x)
&= - 2 (1-t)^{(q-1)/2} {(D^*)^q}  \frac {G_q'(0)}  {(F_{q}+G_{q})(D^*)}.
\end{align*}
Because $F_q$ and $G_q$ are nonnegative and $G_q'$ is negative, we have \eqref{W_zero_inequality}, as desired.
\end{proof}


\begin{lemma} \label{generator_prob3} (i) For $(t,x)$ such that $|x| > A^* \sqrt{1-t}$, we have $\mathcal{L} W^* (t,x)  = 0$.
(ii) If $A^* > 0$, for $(t,x)$ such that $0 < |x| < A^* \sqrt{1-t}$,  we have $\mathcal{L} W^* (t,x) \leq 0$.
\end{lemma}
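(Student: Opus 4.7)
The plan is to treat the two parts separately, each by reducing to facts already recorded earlier in the paper.

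For part (i), I would observe that on $|x| > A^{*}\sqrt{1-t}$ the candidate function has the structural form $(1-t)^{q/2}\zeta(x/\sqrt{1-t})$ prescribed in Section~\ref{section_preliminaries}, with $\zeta(y)=w(A^{*})G_{q}(|y|)$. Since $G_{q}(y)=F_{q}(-y)$ and $G_{q},F_{q}$ both solve the ODE \eqref{eq_ODE}, we can split into $x>0$ (where $\zeta(y)=w(A^{*})G_{q}(y)$) and $x<0$ (where $\zeta(y)=w(A^{*})F_{q}(y)$), and in each case the transformation recalled after \eqref{pde} gives $\mathcal{L}W^{*}(t,x)=0$. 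This step is routine; the only subtlety is remembering to handle the two signs of $x$ via evenness.

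For part (ii), I would use the representation $W^{*}(t,x)=h(t,x)=\overline{U}(t,x)-|x|^{q}$ valid on $|x|\leq A^{*}\sqrt{1-t}$. Since $A^{*}\leq D^{*}$, the inner region lies inside $\{|x|<D^{*}\sqrt{1-t}\}$, on which $\overline{U}$ is given by the first branch of \eqref{U_bar_expression} and hence satisfies $\mathcal{L}\overline{U}=0$ by the same ODE argument as in part (i). It therefore remains to compute $\mathcal{L}(-|x|^{q})$ for $x\neq 0$. A direct differentiation yields
\begin{align*}
\mathcal{L}(|x|^{q})=q|x|^{q-2}\Bigl[\tfrac{q-1}{2}-\tfrac{|x|^{2}}{1-t}\Bigr],
\end{align*}
so $\mathcal{L}W^{*}(t,x)=-\mathcal{L}(|x|^{q})=q|x|^{q-2}\bigl[\tfrac{|x|^{2}}{1-t}-\tfrac{q-1}{2}\bigr]$.

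The key step is then invoking the bound $A^{*}\leq\sqrt{(q-1)/2\vee 0}$ from Lemma~\ref{conjecture_A}. On the region $0<|x|<A^{*}\sqrt{1-t}$ we have $|x|^{2}/(1-t)<(A^{*})^{2}\leq (q-1)/2$ (noting that the region is empty unless $q>1$, in which case $A^{*}>0$ by part (2) of Lemma~\ref{conjecture_A}), so the bracket is negative and $\mathcal{L}W^{*}(t,x)\leq 0$ as claimed. I do not anticipate any real obstacle here: the entire argument reuses the ODE characterization of the building blocks $F_{q},G_{q},\overline{U}$ together with the sharp bound on $A^{*}$ already proved, paralleling the analogous verifications carried out for Problems 1 and 2.
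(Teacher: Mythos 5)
Your proof is correct and takes essentially the same approach as the paper: part (i) via the ODE \eqref{eq_ODE} satisfied by $F_q$ and $G_q$, and part (ii) via the decomposition $W^*=\overline{U}-|x|^q$, the identity $\mathcal{L}\overline{U}=0$ on $\{|x|<D^*\sqrt{1-t}\}$, the direct computation of $\mathcal{L}(|x|^q)$, and the bound $A^*\leq\sqrt{(q-1)/2}$ from Lemma~\ref{conjecture_A}. The only cosmetic difference is that you unify the $x>0$ and $x<0$ cases via $|x|$ where the paper treats them separately.
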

\begin{proof}
(i) It is clear by the fact that $G_q$ solves the ODE \eqref{eq_ODE} in view of \eqref{def_candidate3}.

(ii) By Lemma \ref{conjecture_A}(2), $A^* > 0$ guarantees $q > 1$. Because $|x| < A^* \sqrt{1-t} < D^* \sqrt{1-t}$, we must have  $\mathcal{L} \overline{U} (t,x) = 0$ in view of \eqref{U_bar_expression}.

For $0 < x <  A^* \sqrt{1-t}$, as $W^*(t,x) = \overline{U}(t,x)-x^{q}$, we have  
\begin{align*}
\mathcal{L} W^* (t,x) = - \left[ (q-1)/ 2 -  {x^2} /(1-t) \right]  qx^{q-2}
\end{align*}
 whereas, for $-A^* \sqrt{1-t} < x <  0$,  we have $\mathcal{L} W^* (t,x) =   -\left[ (q-1)/ 2 -  {x^2} /(1-t) \right]  q|x|^{q-2}$.
Now  \eqref{A_star_bound} completes the proof.
\end{proof}

We now have the following optimality results by Lemmas \ref{lemma_domination_prob3}, \ref{smooth_fit_problem3} and \ref{generator_prob3}.  An important difference with the verification of Problems 1 and 2 is the potential non-differentiability at $x = 0$, but this does not cause any issue.

Recall from \eqref{W_zero_inequality} that, for the case $A^* = 0$,  the smooth fit at $A^*$ fails. However, this can be resolved easily by using the following version of It\^o's formula (see, e.g., \cite{Peskir}), for all $t \leq u < 1$,
\begin{multline*}
W^*(u, X_u)  = W^*(t, x)  + \int_t^u \mathcal{L} W^*(s, X_s) 1_{\{ X_s \neq  0 \}} \diff s \\ + \frac 1 2 \int_t^u  \Big(\lim_{z \downarrow 0}\frac \partial  {\partial z}W^*(s, z)- \lim_{z \uparrow 0} \frac \partial  {\partial z} W^*(s, z) \Big) \diff l_s^{0} + \int_t^u  \frac \partial {\partial x} W^*(s, X_s)  1_{\{ X_s \neq 0 \}} \diff W_s,
\end{multline*}
where $\{ l_s^0\}_{t \leq s < 1}$ denotes the local time of $X$ at $0$.  The supermartingale property of the process $\{ W^*(u, X_u) \}_{t \leq u \leq 1}$ still holds by \eqref{W_zero_inequality} and Lemma \ref{generator_prob3}.  For the case $A^* > 0$, Lemma \ref{conjecture_A}(2) guarantees that $q > 1$; in this case,  
\begin{align}
\lim_{z \downarrow 0}\frac \partial  {\partial z}W^*(s, z)=\lim_{z \downarrow 0}\frac \partial  {\partial z}h(s, z)= \lim_{z \uparrow 0} \frac \partial  {\partial z} h(s, z)= \lim_{z \uparrow 0} \frac \partial  {\partial z} W^*(s, z), \label{smoothness_at_0_if_A_great}
\end{align}
by using the equality $(F_{q}+G_{q})'(0) =0$ and $x^{q-1} \rightarrow 0$ as $x \rightarrow 0$.
The rest of the proof is omitted as it is similar to that of Theorem \ref{theorem_problem1}.

\begin{theorem} The function
$W^*$  is the value function.  Namely, $W(t, x) = W^*(t,x)$ for every $0 \leq t < 1$ and $x \in \R$; optimal stopping times are
\begin{align*}
\tau_1^* := \sigma(A^*) \quad \textrm{and} \quad \tau_2^* := \inf\{s \geq \sigma(A^*) : |X_s|\geq  D^* \sqrt{1-s}\}.
\end{align*}
\end{theorem}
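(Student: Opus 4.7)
The plan is to mirror the verification of Theorem \ref{theorem_problem1}, with the one new ingredient being the possible non-differentiability of $W^*$ at $x=0$ noted in \eqref{W_zero_inequality}. By the strong Markov property and Lemma \ref{lemma_domination_prob3}, it suffices to prove the upper bound $\sup_{t\leq\tau<1}\e_{t,x}[h(\tau,X_\tau)]\leq W^*(t,x)$ and to exhibit a stopping time attaining it; the second leg is then the optimal single stopping of \eqref{ospbridge_general_absolute} started from $(\sigma(A^*),X_{\sigma(A^*)})$, which by Section \ref{section_preliminaries} is $\tau(D^*)$.

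First I would apply the generalised It\^o formula displayed just before the theorem to $W^*(s,X_s)$. By Lemma \ref{generator_prob3}(i), $\mathcal{L}W^*=0$ on the continuation region $\{|x|>A^*\sqrt{1-s}\}$, and by Lemma \ref{generator_prob3}(ii), $\mathcal{L}W^*\leq 0$ on the stopping region $\{0<|x|<A^*\sqrt{1-s}\}$ (which is nonempty only when $A^*>0$). The local time coefficient $\lim_{z\downarrow 0}\frac\partial{\partial z}W^*(s,z)-\lim_{z\uparrow 0}\frac\partial{\partial z}W^*(s,z)$ is nonpositive in both cases: when $A^*>0$ it vanishes by \eqref{smoothness_at_0_if_A_great}, and when $A^*=0$ it is strictly negative by \eqref{W_zero_inequality}. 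Together with the smooth fit at $\pm A^*\sqrt{1-s}$ from Lemma \ref{smooth_fit_problem3}(i), this makes $\{W^*(s,X_s)\}_{t\leq s<1}$ a local supermartingale.

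Second, for any $[t,1)$-valued stopping time $\nu$, I would localise using $\tau(M)$ from \eqref{def_tau_D} with $M>D^*$. Optional stopping together with Lemma \ref{lemma_domination_prob3} yields
\[\e_{t,x}[h(\nu\wedge\tau(M),X_{\nu\wedge\tau(M)})]\leq \e_{t,x}[W^*(\nu\wedge\tau(M),X_{\nu\wedge\tau(M)})]\leq W^*(t,x).\]
Splitting the left-hand side into $\{\nu<\tau(M)\}$ and $\{\nu\geq\tau(M)\}$, the first piece tends to $\e_{t,x}[h(\nu,X_\nu)]$ by monotone convergence since $h\geq 0$. For the second piece, $h(s,y)=0$ outside $\{|y|<D^*\sqrt{1-s}\}$ and is bounded there by $(1-s)^{q/2}(D^*)^q$ in view of \eqref{U_bar_expression}; since $\tau(M)\to 1$ almost surely as $M\to\infty$, this piece vanishes by dominated convergence. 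Hence $\e_{t,x}[h(\nu,X_\nu)]\leq W^*(t,x)$.

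Finally, $\tau_1^*=\sigma(A^*)\in[t,1)$ is admissible and attains $W^*(t,x)$ by the very construction of $W^*$ as $W_{A^*}$; combined with the optimality of $\tau(D^*)$ for \eqref{ospbridge_general_absolute}, this delivers both claims of the theorem. The step I expect to be most delicate is the local time term at $x=0$ when $A^*=0$, but the inequality \eqref{W_zero_inequality} has precisely the sign needed for the supermartingale property, so this is a bookkeeping adjustment rather than a genuine obstacle.
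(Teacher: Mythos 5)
Your proposal is correct and follows essentially the same route the paper intends: the paper explicitly sketches the local-time modification of It\^o's formula before the theorem and then says the remaining verification mirrors Theorem \ref{theorem_problem1}, which is exactly what you have carried out, including the correct observation that the local-time coefficient at $x=0$ is nonpositive in both cases ($A^*=0$ and $A^*>0$). One small simplification you could have used in the localisation step: since $h\equiv 0$ on $\{|y|\geq D^*\sqrt{1-s}\}$ and $|X_{\tau(M)}|\geq M\sqrt{1-\tau(M)}>D^*\sqrt{1-\tau(M)}$ for $M>D^*$, the term $\e_{t,x}[h(\tau(M),X_{\tau(M)})1_{\{\nu\geq\tau(M)\}}]$ is identically zero, so no dominated-convergence argument is even needed there.
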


\appendix

\section{Proofs} \label{appendix_proof}

\begin{proof}[Proof of Lemma \ref{lemma_root_n}]
The case $n = 0$ is trivial (as $B^* > 0$) and hence we shall focus on the case $n \geq 1$.

Assume for contradiction that $B^* <  \sqrt{n}$.  Then we can take $(t,x)$ such that $x/\sqrt{1-t} \in (B^*, \sqrt{n})$.
By It\^o's formula
\begin{align}
\diff X^{2n+1}_s = (2n+1) \left[  n-\frac {X_s^2} {1-s}   \right]  X_s^{2n-1} \diff s + (2n+1) X_s^{2n} \diff W_s, \quad  t \leq s < 1. \label{eq_ito_2n_1}
\end{align}
Define the first downcrossing time of $X$:
\begin{align}
T^-(\delta):=\inf\{u \geq t: X_u \leq \delta \}, \quad \delta \geq 0. \label{def_T_delta}
\end{align}
Fix $0 < \varepsilon < x$.
For  $s \in [t, \tau^+(\sqrt{n}) \wedge T^-(\varepsilon)]$,  we have $\varepsilon / \sqrt{1-s} \leq X_s / \sqrt{1-s} \leq \sqrt{n}$ (and hence
$\varepsilon  \leq X_s \leq \sqrt{n} \sqrt{1-s}$). Therefore, taking expectation of the integral of \eqref{eq_ito_2n_1} gives
\begin{align*}
\e_{t,x} [X^{2n+1}_{\tau^+(\sqrt{n}) \wedge T^-(\varepsilon)}] &= x^{2n+1} + \e_{t,x}  \left[ \int_t^{\tau^+(\sqrt{n}) \wedge T^-(\varepsilon)} (2n+1) \left[  n-\frac {X^2_s} {1-s}   \right]  X_s^{2n-1} \diff s \right] \\ &\geq x^{2n+1}.
\end{align*}
Dominated convergence gives upon $\varepsilon \downarrow 0$ that $\e_{t,x} [X^{2n+1}_{\tau^+(\sqrt{n}) \wedge T^-(0)}] \geq x^{2n+1}$.
Moreover, because $X^{2n+1}_{\tau^+(\sqrt{n})} \geq X^{2n+1}_{\tau^+(\sqrt{n}) \wedge T^-(0)}$ a.s., we also have
$\e_{t,x} [X^{2n+1}_{\tau^+(\sqrt{n})}]  \geq x^{2n+1}$.

On the other hand,  by  \eqref{expectation_moments_2n_1} and \eqref{monotonicity_B}, and because $\sqrt{n} > x/\sqrt{1-t} > B^*$, we must have
\begin{align*}
x^{2n+1} = \e_{t,x} [X_{\tau^+(x/\sqrt{1-t})}^{2n+1}]  > \e_{t,x} [X_{\tau^+(\sqrt{n})}^{2n+1}].
\end{align*}
This is a contradiction, and hence $B^* \geq \sqrt{n}$.
\end{proof}

\begin{proof}[Proof of Lemma \ref{lemma_root_q}]  Because the case $q \leq 1$ is trivial, we focus on the case $q > 1$. 
Assume for contradiction that $D^* < \sqrt{(q-1)/2}$.  Then we can take $(t,x)$ such that $x/\sqrt{1-t} \in (D^*, \sqrt{(q-1)/2})$.

Arguments similar to the ones in the proof of Lemma \ref{lemma_root_n} give
\begin{align*}
\e_{t,x} [|X_{\tau( \sqrt{(q-1)/2}) \wedge T^-(0)}|^{q}] = \e_{t,x} [X^{q}_{\tau( \sqrt{(q-1)/2}) \wedge T^-(0)}] \geq x^q,
\end{align*}
where $T^-(0)$ is defined as in \eqref{def_T_delta}.
%
%
%
Moreover, because $|X_{\tau(\sqrt{(q-1)/2})}|^{q} \geq X^q_{\tau(\sqrt{(q-1)/2}) \wedge T^-(0)}$ a.s., we also have
$\e_{t,x} [|X_{\tau(\sqrt{(q-1)/2})}|^{q} ]  \geq x^{q}$.

On the other hand,  by  \eqref{moment_x_q} and \eqref{monotonicity_D}, and because $\sqrt{(q-1)/2} > x/\sqrt{1-t} > D^*$, we must have
\begin{align*}
x^{q} = \e_{t,x} [|X_{\tau(x/\sqrt{1-t})}|^{q}]  > \e_{t,x} [|X_{\tau(\sqrt{(q-1)/2})}|^{q}].
\end{align*}
This is a contradiction, as desired.
\end{proof}

\begin{proof}[Proof of Lemma \ref{lemma_smooth_fit}]
Differentiating \eqref{def_candidate},
\begin{align*}
\frac \partial {\partial x}V^*(t,x)
=v(C^*) \sqrt{2 \pi} \Big(  \frac x {\sqrt{1-t}} e^{{x^2}/ {(2(1-t))}} \Phi (- x /{\sqrt{1-t}} ) - \frac 1 {\sqrt{2 \pi}}\Big),
\end{align*}
and hence
\begin{align*}
&\lim_{x \downarrow C^* \sqrt{1-t}}\frac \partial {\partial x}V^*(t,x) = v(C^*) \sqrt{2 \pi} \Big(   {C^*} e^{{(C^*)^2}/ {2}} \Phi (-C^* ) - \frac 1 {\sqrt{2 \pi}}\Big) \\
&= C^* \sqrt{2\pi}(1-(B^*)^2)e^{(C^*)^2/2}{\Phi(C^*)} -(C^*)^2 -  \frac {\Phi(C^*)} {\Phi(-C^*)} (1-(B^*)^2) +C^* \frac 1 {\sqrt{2 \pi}} \frac 1 {\Phi(-C^*)}e^{-(C^*)^2/2}.
\end{align*}
On the other hand,
\begin{align*}
\lim_{x \uparrow C^* \sqrt{1-t}} \frac \partial {\partial x} f(t,x)
&=  \sqrt{2 \pi} (1-(B^*)^2) C^* e^{{(C^*)^2} /2} \Phi (C^*)  - (B^*)^2.
\end{align*}
Taking the difference between the two
\begin{align*}
&\lim_{x \downarrow C^* \sqrt{1-t}}\frac \partial {\partial x}V^*(t,x)  - \lim_{x \uparrow C^* \sqrt{1-t}} \frac \partial {\partial x} f(t,x) \\
&= 1  -(C^*)^2 -  \Big(1+\frac {\Phi(C^*)} {\Phi(-C^*)} \Big) (1-(B^*)^2) +C^* \frac 1 {\sqrt{2 \pi}} \frac 1 {\Phi(-C^*)}e^{-(C^*)^2/2} \\
&= \frac 1 {\Phi(-C^*)} \Big[ {\Phi(-C^*)} (1  -(C^*)^2) -  ({\Phi(-C^*)}+ {\Phi(C^*)} ) (1-(B^*)^2) +C^* \frac 1 {\sqrt{2 \pi}} e^{-(C^*)^2/2} \Big]\\
&= - \frac {u(C^*)} {\Phi(-C^*)},
\end{align*}
which equals $0$ thanks to our choice of $C^*$ as  in Lemma \ref{lemma_maximality_c_star}.
\end{proof}

\begin{proof}[Proof of Lemma \ref{smoothness_problem2}]
We show the differentiability at $x = B^* \sqrt{1-t}$ (that of $x = -B^* \sqrt{1-t}$ holds by symmetry). Differentiating \eqref{def_candidate2} with respect to $x$ gives
\begin{align*}
\frac \partial {\partial x}J^*(t,x)
&= (1-t)^{n} {(F_{2n+1}+G_{2n+1})'(x/\sqrt{1-t})}j(B^*),
\end{align*}
and by Remark \ref{remark_j},
\begin{align}
\lim_{x \uparrow B^* \sqrt{1-t}}\frac \partial {\partial x}J^*(t,x)
&= (1-t)^{n} (B^*)^{2n+1} \frac {{(F_{2n+1}+G_{2n+1})'(B^*)}} {F_{2n+1}(B^*)}  \nonumber \\
&= (1-t)^{n} (B^*)^{2n+1} \frac {F_{2n+1}'(B^*)-F_{2n+1}'(-B^*)} {F_{2n+1}(B^*)} . \label{J_diff_limit}
\end{align}
On the other hand,
\begin{align*}
\frac \partial {\partial x}g (t,x)
&=-(1-t)^{n}(B^*)^{2n+1}\frac {F_{2n+1}' (-x/\sqrt{1-t})} {F_{2n+1}(B^*)}+  (2n+1) x^{2n}.
\end{align*}
Hence,
\begin{align*}
\lim_{x \downarrow B^* \sqrt{1-t}} \frac \partial {\partial x}g (t,x)
&=- (1-t)^{n}(B^*)^{2n+1} \left[ \frac {F_{2n+1}' (-B^*)} {F_{2n+1}(B^*)}-  \frac {2n+1} {B^*} \right],
\end{align*}
which equals \eqref{J_diff_limit} by \eqref{B_star}, as desired.
\end{proof}

\section*{Acknowledgements} 
The authors thank the anonymous referee for his/her insightful comments.
E.\ J.\ Baurdoux was visiting CIMAT, Guanajuato when part of this work was carried out and he is grateful for their hospitality and support.
B.\ A.\ Surya acknowledges the support by the Department of IEOR of Columbia University  during his stay. K. Yamazaki is in part supported by MEXT KAKENHI Grant Number  26800092, the Inamori foundation research grant, and the Kansai University Subsidy for Supporting young Scholars 2014.

\end{document}